\newtheorem{theorem}{Theorem}
\newtheorem{lemma}{Lemma}
\newtheorem{proposition}{Proposition}
\newtheorem{example}{Example}
\newtheorem{remark}{Remark}
\newtheorem{corollary}{Corollary}
\def\dim{{\rm \ dim}\,}
\def\reg{{\rm \ reg}\, }
\def\ini{{\rm in}\, }
\def\max{{\rm max}}
\newcommand{\A}{{\mathbb A}}
\newcommand{\N}{{\mathbb N}}
\newcommand{\Z}{{\mathbb Z}}
\newcommand{\Fcal}{{\mathcal F}}
\newcommand{\Bcal}{{\mathcal B}}
\newcommand{\Acal}{{\mathcal A}}
\newcommand{\Ccal}{{\mathcal C}}
\newcommand{\Scal}{{\mathcal S}}
\newcommand{\Pcal}{{\mathcal P}}
\begin{document}
\title{Noether resolutions in dimension $2$}

\address{Universidad de La Laguna. Facultad de Ciencias.
Sección de Matemáticas.
Avda. Astrofísico Francisco Sánchez, s/n.
Apartado de correos 456.
38200-La Laguna. Tenerife. Spain.}
\author[I. Bermejo]{Isabel Bermejo}

\email{ibermejo@ull.es}

\address{Universidad de La Laguna. Facultad de Ciencias.
Sección de Matemáticas.
Avda. Astrofísico Francisco Sánchez, s/n.
Apartado de correos 456.
38200-La Laguna. Tenerife. Spain.}
\author[E. Garc\'{i}a-Llorente]{Eva Garc\'{i}a-Llorente}
\email{evgarcia@ull.es}

\address{Aix-Marseille Université, CNRS, LIF UMR 7279, Marseille, France.}
\author[I. Garc\'{i}a-Marco]{Ignacio Garc\'{i}a-Marco}
\email{ignacio.garcia-marco@lif.univ-mrs.fr, iggarcia@ull.es}

\address{Universit\'{e} de Grenoble I, Institut Fourier, UMR 5582, B.P.74, 38402 Saint-Martin D'Heres Cedex, Grenoble and ESPE de Lyon, Universit\'{e} de Lyon 1, Lyon, France.}
\author[M. Morales]{Marcel Morales}
\email{morales@ujf-grenoble.fr}

\subjclass[2010]{}
\date{}

\maketitle

\begin{abstract}
Let $R:= K[x_1,\ldots,x_{n}]$ be a polynomial ring over an infinite 
field $K$, and let $I \subset R$ be a homogeneous ideal with respect
to a weight vector $\omega = (\omega_1,\ldots,\omega_n) \in
(\Z^+)^n$ such that  $\dim(R/I) = d$. In this paper we study the minimal graded free resolution
of $R/I$ as $A$-module, that we call the Noether resolution of $R/I$, whenever $A :=K[x_{n-d+1},\ldots,x_n]$ is a Noether normalization of $R/I$. When $d=2$  and $I$ is
saturated, we give an algorithm for obtaining this resolution that involves the computation of a minimal
Gr\"obner basis of $I$ with respect to the weighted degree reverse
lexicographic order. In the particular case when $R/I$ is a
 $2$-dimensional semigroup ring, we also 
 describe the multigraded version of this resolution in terms of the underlying
semigroup.
Whenever we have the Noether resolution of $R/I$ or its multigraded version, we obtain formulas
 for the corresponding Hilbert series of $R/I$, and when $I$ is homogeneous, we obtain a formula for the Castelnuovo-Mumford regularity of $R/I$.
 Moreover, in the more general setting that $R/I$ is a simplicial 
semigroup ring of any dimension, we provide its  Macaulayfication. 

As an application of the results for $2$-dimensional semigroup rings, we 
provide a new upper bound for the Castelnuovo-Mumford regularity of the coordinate ring of a projective monomial curve.  Finally,
we describe the multigraded Noether resolution and the Macaulayfication of
 either the coordinate ring of a projective monomial curve $\Ccal\subseteq \mathbb{P}_K^{n}$  associated to an arithmetic sequence 
or the coordinate ring of any canonical projection 
 $\pi_{r}(\Ccal)$ of $\Ccal$ to $\mathbb{P}_K^{n-1}$. 
\end{abstract}

\bigskip
{\footnotesize
{\it Keywords:} Graded algebra, Noether normalization, semigroup ring, 
minimal graded free resolution, Cohen-Macaulay ring, Castelnuovo-Mumford regularity.
}

\bigskip

\section{Introduction}
Let $R:= K[x_1,\ldots,x_{n}]$ be a polynomial ring over an infinite
field $K$, and let $I \subset R$ be a weighted homogeneous ideal
with respect to the vector $\omega =
(\omega_1,\ldots,\omega_n) \in (\Z^+)^n$, i.e., $I$ is homogeneous
for the grading ${\rm deg}_{\omega}(x_i) = \omega_i$. We denote by
$d$ the Krull dimension of $R / I$ and we assume that $d\geq 1$. 
Suppose $A := K[x_{n-d+1},\ldots,x_n]$ is a Noether normalization of $R/I$, i.e.,
$A \hookrightarrow R/I$ is an integral ring extension. Under this
assumption $R/I$ is a finitely generated $A$-module, so to study the minimal graded free resolution of $R/I$ as
$A$-module is an interesting problem. Set  
$$ \mathcal F:  0 \longrightarrow \oplus_{v\in \Bcal_p} A(-s_{p,v}) \buildrel{\psi_p}\over{ \longrightarrow} \cdots \buildrel{\psi_{1}}\over{ \longrightarrow}
 \oplus_{v\in \Bcal_0} A(-s_{0,v}) \buildrel{\psi_0}\over{ \longrightarrow} R/I \longrightarrow 0$$
this resolution, where
for all $i \in \{0,\ldots,p\}$ $\Bcal_i$ denotes some finite set, and 
$s_{i,v}$ are nonnegative integers.  This
 work concerns the study of this resolution of $R/I$, which will be 
 called the {\it Noether resolution of $R/I$}. More precisely, 
 we aim at determining the sets $\Bcal_i$, the shifts $s_{i,v}$ and 
the morphisms $\psi_i$.

One of the characteristics of Noether resolutions is that they have shorter length than the minimal 
graded free resolution of $R/I$ as $R$-module. Indeed, the projective dimension of $R/I$ as $A$-module is $p = d - {\rm depth}(R/I)$, meanwhile its projective
dimension of $R/I$ as $R$-module is $n-{\rm depth}(R/I)$. Studying Noether resolutions is interesting since they contain valuable information about $R/I$. For instance,
since the Hilbert series is an additive function, we get  the Hilbert series of $R/I$ from its Noether resolution. Moreover, whenever $I$ is
a homogeneous ideal, i.e., homogeneous for the weight vector
$\omega = (1,\ldots,1)$, one can obtain the Castelnuovo-Mumford
regularity of $R/I$ in terms of the Noether resolution as ${\rm
reg}(R/I) = {\rm max}\{s_{i,v}-i \ \vert\ 0 \leq i \leq p$, $v \in
\Bcal_i\}$.

In Section 2 we start by describing in Proposition \ref{B0} the first step of the Noether resolution of $R/I$.  
By Auslander-Buchsbaum formula, the depth of $R/I$ equals $d - p$. Hence, $R/I$ is Cohen-Macaulay if and only if $p = 0$ or, equivalently, if $R/I$
is a free $A$-module.  This observation together with Proposition \ref{B0}, lead to Proposition
\ref{CMcharacterization} which is an effective
criterion for determining whether $R/I$ is Cohen-Macaulay or not. This criterion generalizes  
\cite[Proposition 2.1]{BerGi01}.  If $R/I$ is Cohen-Macaulay, Proposition \ref{B0} provides the whole Noether resolution of $R/I$.
When $d = 1$ and $R/I$ is not Cohen-Macaulay, we describe the Noether resolution of $R/I$ by means of Proposition \ref{B0} together with Proposition \ref{dim1depth0}.
Moreover, when $d = 2$ and $x_n$ is a nonzero divisor of $R/I$, we are able to provide in Theorem
\ref{algorithm_NoetherResolution} a complete description of the
Noether resolution of $R/I$. All these results rely in the computation of a minimal Gr\"obner basis of
$I$ with respect to the weighted degree reverse lexicographic order. As a consequence of this, we provide in Corollary \ref{HSm=2} a
description of the weighted Hilbert series in terms of
the same Gr\"obner basis. Whenever $I$ is a homogeneous ideal, as a consequence of Theorem
\ref{algorithm_NoetherResolution},  we obtain in Corollary
\ref{regularity} a formula for the Castelnuovo-Mumford regularity of $R/I$ which is equivalent to the one provided in
\cite[Theorem 2.7]{BerGi99}.

In section 3 we study Noether resolutions when $R/I$ is a simplicial semigroup ring,  i.e., whenever $I$ is a toric ideal and $A = K[x_{n-d+1},\ldots,x_n]$ 
is a Noether normalization of $R/I$. We recall that $I$ is a toric ideal if $I = I_{\Acal}$ with $\Acal = \{a_1,\ldots,a_n\} \subset \N^d$ and $a_i = (a_{i1},\ldots,a_{id}) \in \N^d$; where $I_{\Acal}$ denotes
the kernel of the homomorphism of $K$-algebras $\varphi: R
\rightarrow K[t_1,\ldots,t_d];\, x_i \mapsto t^{a_i} = t_1^{a_{i1}}
\cdots t_d^{a_{id}}$ for all $i \in \{1,\ldots,n\}$. If we denote by
$\Scal \subset \N^d$ the semigroup generated by $a_1,\ldots,a_n$,
then the image of $\varphi$ is $K[\Scal] := K[t^s \, \vert \, s \in
\Scal] \simeq R/I_{\Acal}$. By \cite[Corollary~4.3]{Sturm},
$I_{\Acal}$ is
multigraded with respect to the grading induced by $\Scal$ which assigns ${\rm deg}_{\Scal}(x_i) = a_i$ for all 
$i \in \{1,\ldots,n\}$. Moreover, whenever $A$ is a Noether normalization of $K[\Scal]$ we may assume without loss of generality that  $a_{n-d+i} = w_{n-d+i} e_i$ for all $i \in \{1,\ldots,d\}$, where $\omega_{n-d+i}\in\Z^+$ and $\{e_1,\ldots,e_d\}$ is the canonical basis of $\N^d$.  In this setting  we may consider a {\it multigraded Noether
resolution} of $K[\Scal]$, i.e., a minimal multigraded free
resolution of $K[\Scal]$ as $A$-module:
$$0 \longrightarrow  \oplus_{s \in \Scal_p} A \cdot
s  \buildrel{\psi_p}\over{ \longrightarrow} \cdots
\buildrel{\psi_1}\over{ \longrightarrow}  \oplus_{s \in \Scal_0} A
\cdot s \buildrel{\psi_0}\over{ \longrightarrow} K[\Scal]
\longrightarrow 0,$$ 
where $\Scal_i$ are finite subsets of $\Scal$ for all $i \in
\{0,\ldots,p\}$ and $A \cdot s$ denotes the shifting of $A$ by $s
\in \Scal$.  We observe that this multigrading is a refinement
of the grading given by $\omega = (\omega_1,\ldots,\omega_n)$ with
$\omega_i := \sum_{j=1}^d a_{ij} \in \Z^+$; thus, $I_{\Acal}$ is
weighted homogeneous with respect to $\omega$. As a consequence,  whenever we get the multigraded Noether resolution or the multigraded Hilbert series 
of $K[\Scal]$, we also obtain
its Noether resolution and its Hilbert series with respect to the weight vector $\omega$.

 A natural and interesting problem is to describe combinatorially the multigraded Noether resolution of $K[\Scal]$ in terms of the semigroup $\Scal$.
This approach would lead us to results for simplicial semigroup
rings $K[\Scal]$ which do not depend on the characteristic of the field $K$. In general, for any toric ideal, it is well known
 that the minimal number of binomial generators of
$I_{\Acal}$ does not depend on the characteristic of $K$ (see, e.g., \cite[Theorem
5.3]{Sturm}),  but the
Gorenstein,  Cohen-Macaulay and 
Buchsbaum properties of $K[\Scal]$ depend on the characteristic of $K$ (see \cite{Hoa91}, \cite{TH} and \cite{Hoa88}, respectively).
However, in the context of simplicial semigroup rings, these properties do not depend on the characteristic of $K$ (see 
\cite{GotoSuzukiWatanabe76}, \cite{Stanley} and \cite{GarciaSanchezRosales02}, respectively). These facts give support to our aim
of describing the whole multigraded Noether resolution of $K[\Scal]$ in terms of the underlying semigroup $\Scal$ for simplicial semigroup rings.

The results in section 3 are the following. In Proposition \ref{S0} we describe the first step of the multigraded Noether resolution of a simplicial semigroup ring $K[\Scal]$.  As a byproduct we recover in Proposition
\ref{CMcharacSemigroup} a well-known criterion for $K[\Scal]$ to be
Cohen-Macaulay in terms of the semigroup. When $d = 2$, i.e., $I_{\Acal}$ is the ideal of an affine toric
surface, Theorem \ref{S_1} describes the second step of the multigraded Noether resolution in terms of the semigroup $\Scal$. When $d = 2$, from Proposition \ref{S0} and Theorem \ref{S_1}, we derive the whole multigraded Noether resolution of $K[\Scal]$ by means of
 $\Scal$ and, as a byproduct, we also get in Corollary \ref{multiHilbert} its multigraded Hilbert series. 
 Whenever $I_{\Acal}$ a is homogeneous ideal, we get a formula for the Castelnuovo-Mumford regularity of $K[\Scal]$ in terms of $\Scal$, see Remark \ref{multtostandardgraded}.

Given an algebraic variety, the set of points where $X$ is not Cohen-Macaulay is the non Cohen-Macaulay locus.  
Macaulayfication is an analogous operation to resolution of singularities and was
considered in Kawasaki \cite{Kawasaki}, where he provides certain sufficient conditions for $X$ to admit a Macaulayfication. 
For semigroup rings Goto et al. \cite{GotoSuzukiWatanabe76} and Trung and Hoa
\cite{TH} proved the existence of a semigroup $\Scal'$ satisfying
$\Scal \subset \Scal' \subset \bar{\Scal}$, where $\bar{\Scal}$ denotes the saturation of $\Scal$ and thus $K[\bar{\Scal}]$ is the
normalization of $K[\Scal]$, such that we have an
exact sequence:

$$0 \longrightarrow K[\Scal] \longrightarrow K[\Scal'] \longrightarrow K[\Scal' \setminus \Scal] \longrightarrow 0$$
with ${\rm dim}(K[\Scal' \setminus \Scal]) \leq {\rm dim}(K[\Scal]) -
2$. In this setting, $K[\Scal']$ satisfies the condition $S_2$ of Serre, and  is called the $S_2$-fication of
$K[\Scal]$.  Moreover, when $\Scal$ is a simplicial semigroup, \cite[Theorem 5]{Morales07} proves that this semigroup ring $K[\Scal']$ is
exactly the Macaulayfication of $K[\Scal]$; indeed, he proved that  
 $K[\Scal']$ is Cohen-Macaulay and the support of $K[\Scal' \setminus \Scal]$ 
 coincides with the non Cohen-Macaulay locus of $K[\Scal]$.  
 In \cite{Morales07}, the author provides an explicit description of
the Macaulayfication of $K[\Scal]$ in terms of the system of
generators of $I_{\Acal}$ provided $K[\Scal]$ is a codimension $2$
simplicial semigroup ring. Section 4 is devoted to study the Macaulayfication of any simplicial semigroup ring. The main result of this section is  Theorem
\ref{macaulayfication_semigroups}, where we entirely describe the Macaulayfication of
any simplicial semigroup ring $K[\Scal]$ in terms of the set $\Scal_0$, the subset of $\Scal$ that provides the first step of the multigraded Noether 
resolution of $K[\Scal]$.

In sections 5 and 6 we apply the methods and results obtained in the previous ones to
certain dimension $2$ semigroup rings. More precisely, a sequence  $m_1 < \cdots < m_n$
determines the projective monomial curve
$\Ccal \subset {\mathbb P}_K^n$ parametrically defined by $x_i :=
s^{m_i} t^{m_n-m_i}$ for all $i \in \{1,\ldots,n-1\},\, x_{n} =
s^{m_n},  \, x_{n+1} := t^{m_n}$. If we set $\Acal =
\{a_1,\ldots,a_{n+1}\} \subset \N^2$ where $a_i := (m_i, m_n - m_i),
a_n := (m_n, 0)$ and $a_{n+1} := (0,m_n)$, it turns out that the
homogeneous coordinate ring of $\Ccal$ is $K[\Ccal] := K[x_1,\ldots,x_{n+1}]/I_{\Acal}$ and $A = K[x_n,x_{n+1}]$ is a Noether normalization of $R/I_{\Acal}$.

 The main result in Section 5 is Theorem \ref{upperboundregmoncurve}, where
we provide an upper bound on the Castelnuovo-Mumford regularity of $K[\Ccal]$, where $\Ccal$ is a projective 
monomial curve. The proof of this bound is elementary and builds on the results of the previous sections together with
some classical results on numerical semigroups. 
It is known that ${\rm reg}(K[\Ccal]) \leq m_n - n + 1$ after the work \cite{GLP}.
In our case, \cite{Lvovsky} obtained a better upper bound, indeed if we set $m_0 := 0$ he proved that 
${\rm reg}(K[\Ccal]) \leq {\rm max}_{1 \leq i < j \leq n}\{m_i - m_{i-1} + m_j - m_{j-1}\} - 1$. The proof provided by L'vovsky is quite involved and uses advanced cohomological
tools, it would be interesting to know if our results could yield a combinatorial alternative proof of this result. Even if L'vovsky's bound usually
gives a better estimate than the bound we provide here, we easily construct families such that our bound outperforms the one by L'vovsky.

 Also in the context of projective monomial curves, whenever $m_1 < \cdots < m_n$ is an arithmetic sequence of
relatively prime integers, the simplicial semigroup ring $R/I_{\Acal}$ has been
extensively studied (see, e.g., \cite{MolinelliTamone1995,
LiPatilRoberts12, BerGarGar15}) and the multigraded Noether resolution is easy to obtain.
In Section 6, we study the coordinate ring
of the canonical projections of projective monomial curves associated to arithmetic
sequences, i.e., the curves $\Ccal_r$ whose homogeneous coordinate
rings are $K[\Scal_r] = R/I_{\Acal_r}$, where $\Acal_r := \Acal \setminus \{a_r\}$
and $\Scal_r \subset \N^2$ the semigroup generated by $\Acal_r$ for all $r \in \{1,\ldots,n-1\}$. In Corollary \ref{CMcharacterization_QuasiArithmetics} we give a criterion for
 determining when the semigroup ring $K[\Scal_r]$ is Cohen-Macaulay; whenever it is not Cohen-Macaulay, we get
 its Macaulayfication in Corollary \ref{macaulayfication}. Furthermore, in Theorem
\ref{resolution_QuasiArithmetics} we provide an explicit description
of their multigraded Noether resolutions. Finally, in 
Theorem \ref{regularity_QuasiArithmetics} we get a formula for their
Castelnuovo-Mumford regularity.

\section{Noether resolution. General case}

Let $R:=K[x_1,\ldots,x_n]$ be a polynomial ring over an infinite
field $K$, and let $I \subset R$ be a $\omega$-homogeneous ideal, i.e., a weighted homogeneous ideal with respect to the vector 
$\omega = (\omega_1,\ldots,\omega_n) \in (\Z^+)^n$. We assume that $A:=K[x_{n-d+1},\ldots,x_n]$
is a Noether normalization of $R/I$, where $d := {\rm dim}(R/I)$. In
this section we study the Noether resolution of $R/I$, i.e., the minimal graded free resolution of $R/I$ as $A$-module:
\begin{equation}\label{GenResolution_m} \mathcal F:  0 \longrightarrow \oplus_{v\in \Bcal_p} A(-s_{p,v}) \buildrel{\psi_p}\over{ \longrightarrow} \cdots \buildrel{\psi_{1}}\over{ \longrightarrow}
 \oplus_{v\in \Bcal_0} A(-s_{0,v}) \buildrel{\psi_0}\over{ \longrightarrow} R/I \longrightarrow 0,\end{equation}
\noindent where
for all $i \in \{0,\ldots,p\}$ $\Bcal_i$ is a finite set of monomials, and 
$s_{i,v}$ are nonnegative integers.

In order to obtain the first step of the resolution, we will deal with the initial ideal of $I + (x_{n-d+1},\ldots,x_n)$ 
with respect to the weighted degree reverse lexicographic order  $>_{\omega}$. 

We recall that $>_{\omega}$ is defined as follows: $x^{\alpha} >_{\omega} x^{\beta}$
if and only if
\begin{itemize}
\item $\deg_{\omega}(x^{\alpha})>\deg_{\omega}(x^{\beta}),$  or
\item $\deg_{\omega}(x^{\alpha})=\deg_{\omega}(x^{\beta})$ and  the
 last  nonzero  entry of $\alpha - \beta \in \Z^n$ is
 negative. \end{itemize} 

For every polynomial $f\in R$ we denote by $\ini(f)$ 
the initial term of $f$ with respect to $>_{\omega}$. Analogously, for
every ideal $J \subset R$,  $\ini(J)$ denotes its initial ideal with respect to $>_{\omega}$.

\begin{proposition}\label{B0}
Let $\Bcal_0$ be the set of monomials that do not belong to $\ini(I+(x_{n-d+1},\ldots,x_n))$ Then,
$$\{x^{\alpha} + I\ \vert\ x^{\alpha}\in\Bcal_0\}$$ is a minimal
set of generators of $R/I$ as $A$-module and the shifts of the first step of
the Noether resolution (\ref{GenResolution_m}) are given by ${\rm
deg}_{\omega}(x^{\alpha})$ with $x^{\alpha} \in \Bcal_0$.
\end{proposition}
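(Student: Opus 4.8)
The plan is to apply the graded version of Nakayama's lemma to the finitely generated $\omega$-graded $A$-module $M := R/I$. Write $\mathfrak{n} := (x_{n-d+1},\ldots,x_n) \subset R$ and let $\mathfrak{m}_A := (x_{n-d+1},\ldots,x_n)A$ be the irrelevant maximal ideal of $A$. Since every weight $\omega_i$ is a positive integer, $A$ is positively $\omega$-graded with $A_0 = K$, and because $A \hookrightarrow R/I$ is a Noether normalization, $M$ is a finitely generated graded $A$-module. Under these hypotheses the graded Nakayama lemma applies: a set of $\omega$-homogeneous elements of $M$ is a minimal system of generators of $M$ as an $A$-module if and only if its image in $M/\mathfrak{m}_A M$ is a $K$-basis of this vector space.

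The next step is to identify $M/\mathfrak{m}_A M$. Since $\mathfrak{m}_A M = \mathfrak{n}(R/I) = (I+\mathfrak{n})/I$, there is a natural isomorphism of $\omega$-graded $K$-vector spaces
\[ M/\mathfrak{m}_A M = (R/I)\big/\mathfrak{n}(R/I) \cong R\big/(I+\mathfrak{n}). \]
By the standard theory of Gr\"obner bases, for the order $>_{\omega}$ the residue classes of the monomials not lying in $\ini(I+\mathfrak{n})$ form a $K$-basis of $R/(I+\mathfrak{n})$; this set of monomials is precisely $\Bcal_0$. Combining this identification with the graded Nakayama lemma shows that $\{x^{\alpha}+I \mid x^{\alpha}\in\Bcal_0\}$ is a minimal system of generators of $R/I$ as an $A$-module.

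It then remains to record two facts. First, $\Bcal_0$ is finite: the variables $x_{n-d+1},\ldots,x_n$ form a homogeneous system of parameters for $R/I$ (this is exactly what it means for $A$ to be a Noether normalization), so $\dim\bigl(R/(I+\mathfrak{n})\bigr)=0$ and $R/(I+\mathfrak{n})$ is a finite-dimensional $K$-vector space. Second, each generator $x^{\alpha}+I$ is $\omega$-homogeneous of degree $\deg_{\omega}(x^{\alpha})$, so the corresponding free summand in the first step of (\ref{GenResolution_m}) is $A(-\deg_{\omega}(x^{\alpha}))$; this yields the claimed shifts $s_{0,v}=\deg_{\omega}(x^{\alpha})$.

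The argument is short, and the only point requiring genuine care is the bookkeeping that links the two classical ingredients. One must verify that the weighted grading makes $A$ positively graded with $A_0=K$, so that the graded Nakayama lemma is indeed available, and that the canonical $K$-basis of $R/(I+\mathfrak{n})$ furnished by the $>_{\omega}$-standard monomials is compatible, degree by degree, with the $\omega$-grading carried by the resolution (\ref{GenResolution_m}). Once these compatibilities are in place the statement follows directly, and no explicit computation of the maps $\psi_i$ is needed for this first step.
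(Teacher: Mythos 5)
Your proof is correct, and it takes a genuinely different route from the paper's. The paper proves generation by hand: it reduces to monomials $x^{\beta}$ in $x_1,\ldots,x_{n-d}$ outside $\ini(I)$, uses Macaulay's basis theorem for $R/(I+\mathfrak{n})$ to produce $g = x^{\beta} - \sum \lambda_i x^{\alpha_i} \in I + \mathfrak{n}$, and then invokes the identity $\ini(I+(x_{n-d+1},\ldots,x_n)) = \ini(I)+(x_{n-d+1},\ldots,x_n)$ --- a property specific to the (weighted) degree reverse lexicographic order --- to force $g=0$; minimality is left to the reader. You instead run everything through the graded Nakayama lemma, identifying $M/\mathfrak{m}_A M$ with $R/(I+\mathfrak{n})$ and quoting Macaulay's theorem for that quotient; this yields generation and minimality in one stroke, is rigorous where the paper is terse (the minimality claim), and notably does not use the degrevlex-specific identity at all, so your argument works verbatim for any monomial order. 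What the paper's computational route buys in exchange is the explicit structural information that gets reused downstream: its proof exhibits how every standard monomial of $\ini(I)$ factors as an element of $\Bcal_0$ times a monomial of $A$, and it ties $\Bcal_0$ to $\ini(I)$ itself rather than only to $\ini(I+\mathfrak{n})$, which is exactly what the later results (Propositions \ref{CMcharacterization}, \ref{dim1depth0} and \ref{Cmonomials}) exploit, and it explains why the degrevlex order is the right choice throughout the paper. Your proof is the cleaner one for the proposition as stated; just be aware that it postpones, rather than eliminates, the need for the Bayer--Stillman-type property of $>_{\omega}$.
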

\begin{proof} Since $A$ is a Noether
normalization of $R/I$ we have that $\Bcal_0$ is a 
finite set. Let $\Bcal_0 = \{x^{\alpha_1},\ldots,x^{\alpha_k}\}$. To prove that $\Bcal := \{x^{\alpha_1} + I,\ldots,x^{\alpha_k}+I\}$ is a set of generators of $R/I$ as $A$-module 
it suffices to show that for every monomial $x^{\beta} := x_1^{\beta_1} \cdots x_{n-d}^{\beta_{n-d}} \notin \ini(I)$, one has that $x^{\beta} + I \in R/I$ can be written
as a linear combination of $\{x^{\alpha_1} + I,\ldots, x^{\alpha_k} + I\}$.  Since $\{x^{\alpha_1} + (I + (x_{n-d+1},\ldots,x_n)),\ldots,x^{\alpha_k} + (I+(x_{n-d+1},\ldots,x_n))\}$
is a
$K$-basis of $R/(I + (x_{n-d+1},\ldots,x_n))$, we have that $g := x^{\beta} -  \sum_{i = 1}^k \lambda_i x^{\alpha_i} \in I+(x_{n-d+1},\ldots,x_n)$ for some $\lambda_1,\ldots,\lambda_k \in K$. Then we deduce that $\ini(g) \in \ini(I+(x_{n-d+1},\ldots,x_n))$ which is equal to $\ini(I) + (x_{n-d+1},\ldots,x_n)$, and thus
$\ini(g) \in \ini(I)$. Since $x^{\beta} \notin \ini(I)$ and $x^{\alpha_i} \notin \ini(I)$ for all $i \in \{1,\ldots,k\}$, we conclude that $g = 0$ and $x^{\beta} + I =  (\sum_{i = 1}^k \lambda_i x^{\alpha_i}) + I$. The minimality of $\Bcal$ can be easily proved. \end{proof}

\medskip

When $R/I$ is a free $A$-module or, equivalently, when the
projective dimension of $R/I$ as $A$-module is $0$ and hence $R/I$ is
Cohen-Macaulay, Proposition
\ref{B0} provides the whole Noether resolution of $R/I$. In Proposition \ref{CMcharacterization} we characterize the
Cohen-Macaulay property for $R/I$ in terms of the initial ideal $\ini(I)$ previously defined.
This result generalizes
\cite[Theorem 2.1]{BerGi01}, which applies for $I$ a homogeneous
ideal.

\begin{proposition}\label{CMcharacterization}
Let $A = K[x_{n-d+1},\ldots,x_n]$ be a Noether normalization of $R/I$. Then,
$R/I$ is Cohen-Macaulay if and only if $x_{n-d+1},\ldots,x_{n}$ do
not divide any minimal generator of $\ini(I)$.
\end{proposition}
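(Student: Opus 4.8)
The plan is to relate Cohen-Macaulayness to the structure of $\ini(I)$ by combining Proposition \ref{B0} with the fact that $R/I$ is Cohen-Macaulay if and only if it is free as an $A$-module. The key object is the set $\Bcal_0$ of monomials outside $\ini(I + (x_{n-d+1},\ldots,x_n)) = \ini(I) + (x_{n-d+1},\ldots,x_n)$, whose residues generate $R/I$ as an $A$-module by Proposition \ref{B0}. Since $A$-freeness is equivalent to $\{x^\alpha + I \mid x^\alpha \in \Bcal_0\}$ being an $A$-\emph{basis}, and since these already generate, the whole question reduces to whether these generators are $A$-linearly independent.

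The forward direction is where I expect the real work. I would argue the contrapositive: suppose some variable $x_j$ with $n-d+1 \le j \le n$ divides a minimal generator $m$ of $\ini(I)$, so $m = x_j \cdot m'$ with $m' \notin \ini(I)$ by minimality. The monomials of $\ini(I)$ are exactly the leading terms of elements of $I$, and the monomials of $\Bcal_0$ are the ``standard'' monomials in $x_1,\ldots,x_{n-d}$ alone. The idea is that $m'$, after stripping off the powers of the $A$-variables, should yield a nontrivial $A$-linear relation among the $\Bcal_0$ generators: writing $m' = x^\gamma \cdot x^\delta$ where $x^\gamma$ involves only $x_1,\ldots,x_{n-d}$ and $x^\delta$ only the $A$-variables, the element of $I$ with leading term $m = x_j x^\gamma x^\delta$ expresses $x_j x^\delta \cdot (x^\gamma + I)$ as an $A$-combination of strictly smaller standard monomials. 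Since $m'$ itself is standard (not in $\ini(I)$) but $m$ is not, this produces a genuine syzygy, contradicting freeness. The delicate point is to track degrees carefully under $>_\omega$ to ensure the relation is nontrivial and that $x^\gamma + I$ is indeed one of the $\Bcal_0$-generators.

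For the converse, I would assume no $A$-variable divides any minimal generator of $\ini(I)$; then every minimal generator of $\ini(I)$ lies in $K[x_1,\ldots,x_{n-d}]$, so $\ini(I)$ is extended from an ideal in the first $n-d$ variables. In that case the standard monomials $\Bcal_0$ are closed under multiplication by the $A$-variables in the sense that a monomial $x^\beta$ is standard if and only if its ``$x_1,\ldots,x_{n-d}$-part'' lies in $\Bcal_0$, with the $A$-part free. This should force $R/\ini(I)$ to be $A$-free with basis $\Bcal_0$, and then one transfers freeness from $R/\ini(I)$ to $R/I$ via the standard flat-degeneration argument: $R/I$ and $R/\ini(I)$ have the same Hilbert function, and both have $\Bcal_0$ as an $A$-generating set, so if one is $A$-free on $\Bcal_0$ the comparison of Hilbert series forces the other to be as well.

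\textbf{The main obstacle} I anticipate is making the syzygy in the forward direction fully rigorous: one must verify that the reduction of $m = x_j m'$ modulo $\ini(I)$ does not accidentally produce the trivial relation, which requires exploiting the specific combinatorics of $>_\omega$ (in particular that it is a reverse-lexicographic order making the $A$-variables \emph{last}) to guarantee that dividing out $x_j$ keeps $m'$ standard. I would lean on the identity $\ini(I + (x_{n-d+1},\ldots,x_n)) = \ini(I) + (x_{n-d+1},\ldots,x_n)$ already established inside the proof of Proposition \ref{B0}, since it is precisely the reverse-lexicographic property that makes this identity hold and simultaneously powers the basis-versus-generator dichotomy.
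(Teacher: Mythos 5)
Your proposal is correct, and its forward direction is essentially the paper's own argument: from a minimal generator $m = x^\gamma\cdot x_jx^\delta$ of $\ini(I)$ divisible by an $A$-variable, both you and the paper note that $x^\gamma \in \Bcal_0$ (by minimality of $m$), reduce $m$ modulo the reduced Gr\"obner basis, rewrite each standard monomial of the remainder as an element of $\Bcal_0$ times an $A$-monomial, and obtain a nonzero element of ${\rm Ker}(\psi_0)$; the nontriviality you flag as the main obstacle is automatic, precisely because every monomial of the remainder lies outside $\ini(I)$ while $m$ lies inside, so nothing can cancel the term $x_jx^\delta e_{x^\gamma}$. Where you genuinely diverge is the converse. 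The paper argues directly with leading terms: if $g=\sum g_v e_v$ were a nonzero homogeneous element of ${\rm Ker}(\psi_0)$, then the initial term of $\sum g_v v \in I$ has the form $c\,x^\alpha u$ with $x^\alpha \in A$ and $u \in \Bcal_0$, and since every minimal generator of $\ini(I)$ involves only $x_1,\ldots,x_{n-d}$, such a generator must divide $u$, contradicting $u\in\Bcal_0$. You instead observe that the hypothesis makes $\ini(I)$ an ideal extended from $K[x_1,\ldots,x_{n-d}]$, so that $R/\ini(I)$ is $A$-free on $\Bcal_0$, and you transfer freeness to $R/I$ by Hilbert-series comparison: by Macaulay's theorem $R/I$ and $R/\ini(I)$ have the same $\omega$-graded Hilbert function, and a graded surjection $\oplus_{v\in\Bcal_0}A(-\deg_\omega(v))\twoheadrightarrow R/I$ between modules with equal, degreewise finite Hilbert functions is an isomorphism. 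Both routes are sound; the paper's is more self-contained (a single leading-term computation, no appeal to flat degeneration), while yours buys conceptual clarity by isolating the structural reason behind the criterion---under the hypothesis, $\ini(I)$ is extended, hence the associated graded degeneration is already $A$-free---at the cost of invoking the standard equality of Hilbert functions of $R/I$ and $R/\ini(I)$.
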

\begin{proof}
We denote by $\{e_v \, \vert \, v$ in $\Bcal_0\}$ the canonical basis
of $\oplus_{v\in \Bcal_0} A(-{\rm deg}_{\omega}(v))$. By  Proposition
\ref{B0} we know that $\psi_0: \oplus_{v\in \Bcal_0} A(-{\rm
deg}_{\omega}(v)) \longrightarrow R/I$ is the morphism induced by $e_v \mapsto v + I \in
R/I$. By Auslander-Buchsbaum formula, $R/I$ is Cohen-Macaulay if and only if $\psi_0$ is
injective.

 $(\Rightarrow)$ By contradiction, we assume that there exists
 $\alpha = (\alpha_1,\ldots, \alpha_n) \in \N^n$ such that
$x^{\alpha} = x_1^{\alpha_1} \cdots x_{n}^{\alpha_{n}}$ is a minimal
generator of $\ini(I)$ and that $\alpha_i > 0$ for some $i \in
\{n-d+1,\ldots,n\}$. Set $u := x_1^{\alpha_1}\cdots
x_{n-d}^{\alpha_{n-d}}$, since ${\rm in}(I+(x_{n-d+1},\ldots,x_n)) = {\rm in}(I) + (x_{n-d+1},\ldots,x_n)$, we have that $ u\in \Bcal_0$. We
also set 
$x^{\alpha'} := x_{n-d+1}^{\alpha_{n-d+1}}\cdots x_n^{\alpha_n} 
\in A$ and  $f$ the remainder of $x^{\alpha}$ modulo the reduced Gr\"obner basis of $I$
with respect to $>_{\omega}$. Then $x^{\alpha} - f \in I$ and every
monomial in $f$ does not belong to $\ini(I)$. As a consequence, $f =
\sum_{i = 1}^t c_{i} x^{\beta_i}$, where $c_i \in K$ and
$x^{\beta_i} = v_{i} x^{\beta_i'}$ with $v_{i} \in \Bcal_0$ and
$x^{\beta_i'} \in A$ for all $i \in \{1,\ldots,t\}$. Hence,
$x^{\alpha'} e_u - \sum_{i = 1}^t c_{i} x^{\beta_i'} e_{v_i}
\in {\rm Ker}(\psi_0)$ and $R/I$ is not Cohen-Macaulay.

 $(\Leftarrow)$ Assume that there exists a nonzero $g \in {\rm
Ker}(\psi_0)$, namely, $g = \sum_{v\in\Bcal_0} g_v e_v \in {\rm
Ker}(\psi_0)$ with $g_v \in A$ for all $v \in \Bcal_0$. Then,
$\sum_{v\in\Bcal_0} g_v v\in I$. We write $\ini(g) = c x^{\alpha} u$
with $c \in K$, $x^{\alpha} \in A$ and $u \in \Bcal_0$. Since
$x_{n-d+1},\ldots,x_n$ do not divide any minimal generator of
$\ini(I)$, we have that $u \in \ini(I)$, a contradiction.
\end{proof}

\medskip

When $R/I$ has dimension $1$, its depth can be either $0$ or $1$. When ${\rm depth}(R/I) = 1$, then $R/I$ is Cohen-Macaulay and the whole
Noether resolution is given by Proposition \ref{B0}. When $R/I$ is not Cohen-Macaulay, then its depth is $0$ and its projective dimension as
$A$-module is $1$. In this setting, to describe the whole Noether resolution it remains to determine $\Bcal_1$, $\psi_1$ and the
shifts $s_{1,v} \in \N$ for all $v \in \Bcal_1$. In Proposition \ref{dim1depth0} we
 explain how to obtain $\Bcal_1$ and $\psi_1$ by means of a Gr\"obner basis of $I$ with respect
to $>_{\omega}$.

Consider $\chi_1: R \longrightarrow R$ the evaluation morphism  induced by $x_i \mapsto x_i$ for $i \in
\{1,\ldots,n-1\}$, $x_n \mapsto 1$. 

\bigskip

\begin{proposition}\label{dim1depth0}Let $R/I$ be $1$-dimensional ring of depth $0$.
Let $L$ be the ideal $\chi_1({\rm in}(I)) \cdot R$. Then,
$$\Bcal_1 = \Bcal_0 \cap L$$
in the Noether resolution (\ref{GenResolution_m}) of $R/I$ and the shifts of the second step of this resolution
are given by ${\rm deg}_{\omega}(u x_{n}^{\delta_u}),$ where $u \in \Bcal_1$ and $\delta_u := {\rm min}
\{\delta \, \vert \, u x_{n}^{\delta} \in {\rm in}(I)\}$.
\end{proposition}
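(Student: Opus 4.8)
We have $R/I$ of dimension 1, depth 0, and $A=K[x_n]$ is a Noether normalization. The projective dimension as $A$-module is 1, so the resolution is
$$0 \longrightarrow \oplus_{v\in\Bcal_1} A(-s_{1,v}) \buildrel{\psi_1}\over{\longrightarrow} \oplus_{v\in\Bcal_0} A(-s_{0,v}) \buildrel{\psi_0}\over{\longrightarrow} R/I \longrightarrow 0.$$
From Proposition \ref{B0}, $\Bcal_0$ consists of the monomials $x^\alpha$ not in $\ini(I+(x_n))=\ini(I)+(x_n)$, and these are exactly the monomials in $x_1,\dots,x_{n-1}$ avoiding $\ini(I)$. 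Since $\psi_0$ is surjective with the stated generators, $\Bcal_1$ and $\psi_1$ must describe the relations (the kernel of $\psi_0$), which as an $A$-module is free of rank $|\Bcal_1|$.

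**The approach.** The plan is to identify the syzygy module $\ker(\psi_0)$ as a free $A$-module and exhibit an explicit basis indexed by $\Bcal_1=\Bcal_0\cap L$. First I would like to understand $L=\chi_1(\ini(I))\cdot R$: since $\ini(I)$ is a monomial ideal, $\chi_1$ sends each generator $x^\gamma = x_1^{\gamma_1}\cdots x_n^{\gamma_n}$ to $x_1^{\gamma_1}\cdots x_{n-1}^{\gamma_{n-1}}$ (dropping the $x_n$-power), so $L$ is the monomial ideal in $x_1,\dots,x_{n-1}$ generated by these ``dehomogenized'' monomials. A monomial $u\in\Bcal_0$ lies in $L$ precisely when some power $u\,x_n^\delta\in\ini(I)$; this is exactly the condition defining a nonzero divisor obstruction, since $u\notin\ini(I)$ but $u\,x_n^\delta\in\ini(I)$ means $x_n$ kills the class of $u$ after multiplication. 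This motivates $\delta_u=\min\{\delta\mid u\,x_n^\delta\in\ini(I)\}$, which is well-defined and positive exactly for $u\in\Bcal_1$.

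**Constructing the syzygies.** For each $u\in\Bcal_1$, consider $u\,x_n^{\delta_u}\in\ini(I)$; lift it to an element of $I$ by taking a polynomial $h_u\in I$ with $\ini(h_u)=u\,x_n^{\delta_u}$ from a Gröbner basis of $I$ with respect to $>_\omega$. Reducing $h_u$ modulo the reduced Gröbner basis expresses $u\,x_n^{\delta_u}+I$ as an $A$-linear combination $\sum_{v\in\Bcal_0} c_{u,v}(x_n)\,(v+I)$ of the generators, exactly as in the argument proving Proposition \ref{CMcharacterization}. This yields a syzygy $x_n^{\delta_u} e_u - \sum_v c_{u,v} e_v \in \ker(\psi_0)$, whose leading term (in a suitable term order on the free module) is $x_n^{\delta_u}e_u$. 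The shift of this syzygy is $\deg_\omega(u\,x_n^{\delta_u})$, matching the claim. I would then argue these syzygies are $A$-linearly independent and generate $\ker(\psi_0)$: independence follows because their leading terms $x_n^{\delta_u}e_u$ involve distinct basis vectors $e_u$; surjectivity (they generate the whole kernel) follows because any relation $\sum g_v e_v\in\ker(\psi_0)$ has $\sum g_v v\in I$, so its leading monomial $\ini(g_v)\cdot v$ lies in $\ini(I)$, forcing the corresponding $v\in\Bcal_1$, and one reduces by the constructed syzygy to strictly decrease the leading term.

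**The main obstacle.** The hard part will be establishing that these $|\Bcal_1|$ syzygies form a \emph{minimal} generating set of the free kernel of rank $|\Bcal_1|$, and in particular that $\ker(\psi_0)$ really is free of exactly this rank rather than merely being generated by these elements. The rank equality $|\Bcal_1|=\projdim$-count should come from the leading-term argument: the leading terms $x_n^{\delta_u}e_u$ for $u\in\Bcal_1$ are distinct and form an ``initial module'' whose standard monomials biject with $R/I$, giving the right Hilbert-function bookkeeping. The delicate point is checking that $\delta_u$ is the \emph{exact} exponent appearing (not just an upper bound) and that no smaller syzygy exists on each $e_u$, which is where the definition $\delta_u=\min\{\delta\mid u\,x_n^\delta\in\ini(I)\}$ does the essential work; I expect this minimality to reduce to the Gröbner-basis property that reduced remainders are uniquely determined.
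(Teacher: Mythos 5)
Your proposal is correct and follows essentially the same route as the paper's proof: construct the syzygies $x_n^{\delta_u}e_u - \sum_v c_{u,v}e_v$ from the remainder of $u\,x_n^{\delta_u}$ modulo the reduced Gr\"obner basis, prove their $A$-independence via leading terms (the paper phrases this as triangularity, since degrevlex forces every remainder monomial $v\,x_n^{\beta_n}$ to satisfy $u >_{\omega} v$ and $\beta_n \geq \delta_u$), and prove they generate ${\rm Ker}(\psi_0)$ by the same leading-term descent you describe. The ``main obstacle'' you flag is not a real one: once ${\rm Ker}(\psi_0)$ is shown to be free with this basis, the exactness of the degrees $\deg_{\omega}(u\,x_n^{\delta_u})$ is automatic, and minimality of the resolution follows from the minimality of $\Bcal_0$ in Proposition \ref{B0} via Nakayama.
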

\begin{proof} For every $u =
x_1^{\alpha_1} \cdots x_{n-1}^{\alpha_{n-1}} \in \Bcal_0 \cap L$,
there exists $\delta \in \N$ such that $u x_{n}^{\delta} \in {\rm
in}(I)$; let $\delta_u$ be the minimum of all such $\delta$.
 Consider $p_u \in R$ the remainder of $u x_{n}^{\delta_u}$ modulo the reduced Gr\"obner
basis of $I$ with respect to $>_{\omega}$. Thus $u
x_{n}^{\delta_u} - p_u \in I$ is $\omega$-homogeneous and every
monomial $x^{\beta}$ appearing in $p_u$ does not belong to ${\rm
in}(I)$, then by Proposition \ref{B0} it can be expressed as
$x^{\beta} = v x_{n}^{\beta_{n}}$, where
$\beta_{n} \geq 0$ and $v \in \Bcal_0$. Moreover, since $ux_{n}^{\delta_u} >_{\omega} x^{\beta}$, then $\beta_{n} \geq
\delta_u$ and $u >_{\omega} v$. Thus, we can write
$$p_u = \sum_{v \in \Bcal_0 \atop u >_{\omega} v} x_{n}^{\delta_u}
m_{u_v} v,$$ with $m_{u_v} = c x^{\alpha_{u_v}} \in
A = K[x_n]$ a monomial (possibly $0$) for all $v \in \Bcal_0$, $u
>_{\omega} v$.

Now we denote by $\{e_v \, \vert \, v$ in $\Bcal_0\}$ the canonical
basis of $\oplus_{v\in \Bcal_0} A(-\deg_{\omega}(v))$ and consider
the graded morphism $\psi_0: \oplus_{v\in \Bcal_0}
A(-\deg_{\omega}(v)) \longrightarrow R/I$ induced by $e_v \mapsto v + I
\in R/I$. The above construction yields that $$h_u :=
x_{n}^{\delta_u} (e_u - \sum_{v \in \Bcal_0 \atop u >_{\omega} v}
m_{u_v} e_v) \in {\rm Ker}(\psi_0)$$ for all $u \in \Bcal_0 \cap L$. We will 
prove that ${\rm Ker}(\psi_0)$ is a free $A$-module with basis
$$\Ccal := \{h_u \, \vert \, u \in \Bcal_0 \cap L\}.$$

Firstly, we observe that the $A$-module generated by the elements of
$\Ccal$ is free due to the triangular form of the matrix formed by the elements of $\Ccal$. 
Let us now take $g = \sum_{v \in \Bcal_0} g_v e_v \in {\rm
Ker}(\psi_0)$ with $g_v \in A$, we assume that $g \in \oplus_{v
\in \Bcal_0} A(-\deg_{\omega}(v))$ is $\omega$-homogeneous and, thus, $g_v$ is
either $0$ or a monomial of the form $c x_n^{\beta_v}$ with $c \in K$ and $\beta_v \in \N$ for all $v \in
\Bcal_0$. We consider $\bar{\psi_0}:
\oplus_{v \in \Bcal_0} A(-\deg_{\omega}(v)) \longrightarrow R$ the
monomorphism of $A$-modules induced by $e_v \mapsto v$. Since
$\psi_0(g) = 0$, then the polynomial $g' := \bar{\psi_0}(g) =
\sum_{u \in \Bcal_0} g_u u \in I$ and ${\rm in}(g') = c
x_{n}^{\gamma} w$ for some $w \in \Bcal_0$, $\gamma \in \N$ and $c \in K$. Since ${\rm in}(g') \in {\rm in}(I)$, we 
get that $w \in \Bcal_0 \cap L$ and $\gamma \geq \delta_w$. Hence, $g_1 := g - c x_{n-1}^{\gamma
- \delta_w} h_w \in {\rm Ker}(\psi_0)$. If
$g_1$ is identically zero, then $g \in (\{h_u \, \vert \, u \in
\Bcal_0 \cap L\})$. If $g_1$ is not zero, we have that $0 \neq {\rm
in}(\bar{\psi_0}(g_1)) < {\rm in}(\bar{\psi_0}(g))$ and we iterate this process with
$g_1$ to derive that $\{h_u \, \vert \, u \in \Bcal_0 \cap L\}$
generates ${\rm Ker}(\psi_0)$.
\end{proof}

The rest of this section concerns $I$ a saturated ideal  such that
$R/I$ is $2$-dimensional and it is not Cohen-Macaulay  (and, in particular, ${\rm depth}(R/I) = 1$). We assume that $A = K[x_{n-1},x_n]$ is a Noether normalization of
$R/I$ and we aim at  describing the whole Noether resolution of $R/I$. To
achieve this it only remains to describe $\Bcal_1$, $\psi_1$ and the
shifts $s_{1,v} \in \N$ for all $v \in \Bcal_1$. In Proposition \ref{Cmonomials} we
 explain how to obtain $\Bcal_1$ and $\psi_1$ by means of a Gr\"obner basis of $I$ with respect
to $>_{\omega}$. Since $K$ is an infinite field, $I$ is a saturated ideal and $A$ is a Noether
normalization of $R/I$, one has that $x_n + \tau x_{n-1}$ is 
a nonzero divisor on $R/I$ for all $\tau \in K$ but a finite set.  Thus, by performing a mild change of
coordinates if necessary, we may assume that $x_n$ is a nonzero divisor on $R/I$.

Now consider $\chi: R \longrightarrow R$ the evaluation morphism  induced by $x_i \mapsto x_i$ for $i \in
\{1,\ldots,n-2\}$, $x_i \mapsto 1$ for $i \in \{n-1,n\}$. 

\bigskip

\begin{proposition}\label{Cmonomials}  Let $R/I$ be $2$-dimensional, non Cohen-Macaulay ring such that $x_n$ is a nonzero divisor.
Let $J$ be the ideal $\chi({\rm in}(I)) \cdot R$. Then,
$$\Bcal_1 = \Bcal_0 \cap J$$
in the Noether resolution (\ref{GenResolution_m}) of $R/I$ and the shifts of the second step of this resolution
are given by ${\rm deg}_{\omega}(u x_{n-1}^{\delta_u}),$ where $u \in \Bcal_1$ and $\delta_u := {\rm min}
\{\delta \, \vert \, u x_{n-1}^{\delta} \in {\rm in}(I)\}$.
\end{proposition}
\begin{proof} Since $x_n$ is a nonzero divisor of $R/I$ and $I$ is
a $\omega$-homogeneous ideal, then $x_n$ does not divide any minimal
generator of $\ini(I)$. As a consequence, for every $u =
x_1^{\alpha_1} \cdots x_{n-2}^{\alpha_{n-2}} \in \Bcal_0 \cap J$,
there exists $\delta \in \N$ such that $u x_{n-1}^{\delta} \in {\rm
in}(I)$; by definition, $\delta_u$ is the minimum of all such $\delta $.
 Consider $p_u \in R$ the remainder of $u x_{n-1}^{\delta_u}$ modulo the reduced Gr\"obner
basis of $I$ with respect to $>_{\omega}$. Then $u
x_{n-1}^{\delta_u} - p_u \in I$ is $\omega$-homogeneous and every
monomial $x^{\beta}$ appearing in $p_u$ does not belong to ${\rm
in}(I)$, then by Proposition \ref{B0} it can be expressed as
$x^{\beta} = v x_{n-1}^{\beta_{n-1}} x_{n}^{\beta_{n}}$, where
$\beta_{n-1}, \beta_{n} \geq 0$ and $v \in \Bcal_0$. Moreover, we have
that $ux_{n-1}^{\delta_u} >_{\omega} x^{\beta}$  which implies that
either $\beta_{n} \geq 1$, or $\beta_{n}=0$, $\beta_{n-1} \geq
\delta_u$ and $u >_{\omega} v$. Thus, we can write
$$p_u = \sum_{v \in \Bcal_0 \atop u >_{\omega} v} x_{n-1}^{\delta_u}
f_{u_v} v + \sum_{v \in \Bcal_0} x_{n} g_{u_v} v,$$ with $f_{u_v} \in
K[x_{n-1}]$ for all $v \in \Bcal_0$, $u
>_{\omega} v$ and $g_{u_v} \in A$ for all $v \in \Bcal_0$.

Now we denote by $\{e_v \, \vert \, v$ in $\Bcal_0\}$ the canonical
basis of $\oplus_{v\in \Bcal_0} A(-\deg_{\omega}(v))$ and consider
the graded morphism $\psi_0: \oplus_{v\in \Bcal_0}
A(-\deg_{\omega}(v)) \longrightarrow R/I$ induced by $e_v \mapsto v + I
\in R/I$. The above construction yields that $$h_u :=
x_{n-1}^{\delta_u} e_u - \sum_{v \in \Bcal_0 \atop u >_{\omega} v}
x_{n-1}^{\delta_u} f_{u_v} e_v - \sum_{v \in \Bcal_0} x_{n} g_{u_v}
e_v \in {\rm Ker}(\psi_0)$$ for all $u \in \Bcal_0 \cap J$. We will 
prove that ${\rm Ker}(\psi_0)$ is a free $A$-module with basis
$$\Ccal := \{h_u \, \vert \, u \in \Bcal_0 \cap J\}.$$

Firstly, we prove that the $A$-module generated by the elements of
$\Ccal$ is free. Assume that $\sum_{u \in \Bcal_0 \cap J} q_u h_u = 0$
where $q_u \in A$ for all $u \in \Bcal_0 \cap J$ and we may also
assume that $x_{n}$ does not divide $q_v$ for some $v \in \Bcal_0 \cap
J$. We consider the evaluation morphism $\tau$ induced by $x_{n}
\mapsto 0$ and we get that $\sum_{u \in \Bcal_0 \cap J} \tau(q_u)\,
\tau(h_u) = \sum_{u \in \Bcal_0 \cap J} \tau(q_u)\,
(x_{n-1}^{\delta_u} e_u + \sum_{v \in \Bcal_0 \atop u >_{\omega} v}
x_{n-1}^{\delta_u} f_{u_v} e_v) = 0$, which implies that $\tau(q_u)
= 0$ for all $u \in \Bcal_0 \cap J$ and, hence, $x_{n} \mid q_u$ for
all $u \in \Bcal_0 \cap J$, a contradiction.

Let us take $g = \sum_{v \in \Bcal_0} g_v e_v \in {\rm
Ker}(\psi_0)$ with $g_v \in A$, we assume that $g \in \oplus_{v
\in \Bcal_0} A(-\deg_{\omega}(v))$ is $\omega$-homogeneous and, thus, $g_v$ is
either $0$ or a $\omega$-homogeneous polynomial for all $v \in
\Bcal_0$. We may also suppose that there exists $v \in \Bcal_0$ such
that $x_{n}$ does not divide $g_v$. We consider $\bar{\psi_0}:
\oplus_{v \in \Bcal_0} A(-\deg_{\omega}(v)) \longrightarrow R$ the
monomorphism of $A$-modules induced by $e_v \mapsto v$. Since
$\psi_0(g) = 0$, then the polynomial $g' := \bar{\psi_0}(g) =
\sum_{u \in \Bcal_0} g_u u \in I$ and ${\rm in}(g') = c
x_{n-1}^{\gamma} w$ for some $w \in \Bcal_0$ and some $c \in K$, which
implies that $w \in \Bcal_0 \cap J$. By definition of $\delta_w$ we
get that $\gamma \geq \delta_w$, hence $g_1 := g - c x_{n-1}^{\gamma
- \delta_w} h_w \in {\rm Ker}(\psi_0)$. If
$g_1$ is identically zero, then $g \in (\{h_u \, \vert \, u \in
\Bcal_0 \cap J\})$. If $g_1$ is not zero, we have that $0 \neq {\rm
in}(\bar{\psi_0}(g_1)) < {\rm in}(\bar{\psi_0}(g))$ and we iterate this process with
$g_1$ to derive that $\{h_u \, \vert \, u \in \Bcal_0 \cap J\}$
generates ${\rm Ker}(\psi_0)$.

\end{proof}

\bigskip

From Propositions \ref{B0} and \ref{Cmonomials} and their
proofs, we can obtain the Noether resolution $\mathcal F$ of $R/I$
by means of a Gr\"obner basis of $I$ with respect to $>_{\omega}$.
We also observe that for obtaining the shifts of the resolution it
suffices to know a set of generators of $\ini(I)$. The following theorem gives the resolution. 

\begin{theorem}\label{algorithm_NoetherResolution}  Let $R/I$ be a $2$-dimensional ring such that $x_n$ is a nonzero divisor. We denote by $\mathcal{G}$ be a Gr\"obner basis of
$I$ with respect to $>_{\omega}$. If $\delta_u:={\rm min}\{\delta\
\vert\ u x_{n-1}^{\delta} \in \ini(I)\}$ for all $u\in\Bcal_1$,
then
$$\mathcal F: 0\longrightarrow  \oplus_{u \in \Bcal_1} A(-\deg_{\omega}(u)-\delta_u \omega_{n-1})
\buildrel{\psi_1}\over{ \longrightarrow}   \oplus_{v\in\Bcal_0}
A(-\deg_{\omega}(v)) \buildrel{\psi_0}\over{\longrightarrow}
R/I\longrightarrow 0,$$ is the Noether resolution of $R/I$, where
$$\begin{array}{crcl} \psi_0: & \oplus_{v \in  \Bcal_0}
A(-\deg_{\omega}(v)) & \rightarrow & R/I,
\\ & e_v & \mapsto &v + I \end{array}$$ and $$\begin{array}{crcl} \psi_1: & \oplus_{u \in \Bcal_1} A(-\deg_{\omega}(u)-\delta_u \omega_{n-1}) & \longrightarrow
& \oplus_{v \in \Bcal_0} A(-\deg_{\omega}(v)), \\ & e_u & \mapsto &
x_{n-1}^{\delta_u} e_u - \sum_{v\in\Bcal_0}
 f_{u_v} e_v \end{array}$$ whenever $\sum_{v \in \Bcal_0} f_{u_v} v$ with $f_{u_v} \in A$ is the remainder of the division
 of $u x_{n-1}^{\delta_u}$ by $\mathcal G$.
\end{theorem}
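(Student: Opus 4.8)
The plan is to read off the two maps directly from the two preceding propositions and then verify exactness and minimality. First I would invoke Proposition \ref{B0}: it gives that $\psi_0$, defined by $e_v \mapsto v + I$, is a graded surjection whose source $\oplus_{v \in \Bcal_0} A(-\deg_\omega(v))$ realizes a minimal system of generators of $R/I$ as an $A$-module. This settles exactness at $R/I$, the shape of the right-hand term of $\mathcal{F}$, and its shifts.

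Since $R/I$ is $2$-dimensional and $x_n$ is a nonzero divisor, ${\rm depth}(R/I) \in \{1,2\}$, so I would split into two cases. If ${\rm depth}(R/I) = 2$ then $R/I$ is Cohen-Macaulay, hence by Proposition \ref{CMcharacterization} no minimal generator of $\ini(I)$ is divisible by $x_{n-1}$; since every element of $\Bcal_0$ is a monomial in $x_1,\ldots,x_{n-2}$ lying outside $\ini(I)$, this forces $\Bcal_0 \cap J = \emptyset$, i.e. $\Bcal_1 = \emptyset$, and $\mathcal{F}$ collapses to the length-zero resolution furnished by Proposition \ref{B0}. The essential case is ${\rm depth}(R/I) = 1$, where $p = 1$ and Proposition \ref{Cmonomials} applies: it identifies $\Bcal_1 = \Bcal_0 \cap J$, provides the shifts $\deg_\omega(u x_{n-1}^{\delta_u}) = \deg_\omega(u) + \delta_u\,\omega_{n-1}$, and---most importantly---shows that $\text{Ker}(\psi_0)$ is a \emph{free} $A$-module with basis $\Ccal = \{h_u \mid u \in \Bcal_1\}$.

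The one genuine verification is to match the map $\psi_1$ of the statement with the inclusion of this free kernel. By Proposition \ref{B0} every monomial of the remainder $p_u$ of $u x_{n-1}^{\delta_u}$ modulo $\mathcal{G}$ factors as $v\, x_{n-1}^a x_n^b$ with $v \in \Bcal_0$, so $p_u = \sum_{v \in \Bcal_0} f_{u_v} v$ for unique $f_{u_v} \in A$; these are the $f_{u_v}$ of the statement. Proposition \ref{Cmonomials} exhibits the decomposition of each such coefficient into its $x_{n-1}^{\delta_u}$-part and its $x_n$-part, and shows that the resulting element $x_{n-1}^{\delta_u} e_u - \sum_v f_{u_v} e_v$ is exactly the kernel basis vector $h_u$. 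Hence $\psi_1(e_u) = h_u$, so $\psi_1$ is the embedding of the free module on $\Ccal$: it is injective with image $\text{Ker}(\psi_0)$, giving exactness in the middle and on the left.

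Finally, minimality of $\mathcal{F}$ follows cheaply: because $\Bcal_0$ maps to a minimal generating set, graded Nakayama forces $\text{Ker}(\psi_0) \subseteq \mathfrak{m}_A \bigl( \oplus_{v} A(-\deg_\omega(v)) \bigr)$ for $\mathfrak{m}_A = (x_{n-1}, x_n)$, so every matrix entry of $\psi_1$ lies in $\mathfrak{m}_A$; directly, each $f_{u_v} \in (x_{n-1}^{\delta_u}, x_n)$ and $\delta_u \geq 1$ since $u \notin \ini(I)$, so the diagonal coefficient $x_{n-1}^{\delta_u} - f_{u_u}$ stays in $\mathfrak{m}_A$. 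I expect this bookkeeping---keeping track of which monomials of $p_u$ carry the factor $x_{n-1}^{\delta_u}$ rather than $x_n$, and confirming that the diagonal entry acquires no unit term---to be the only delicate step, since exactness and freeness are inherited wholesale from Propositions \ref{B0} and \ref{Cmonomials}.
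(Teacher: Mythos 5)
Your proposal is correct and follows exactly the route the paper intends: the paper offers no separate proof of Theorem \ref{algorithm_NoetherResolution}, presenting it as a direct assembly of Proposition \ref{B0} (for $\psi_0$ and the shifts $\deg_{\omega}(v)$) and Proposition \ref{Cmonomials} and its proof (for $\Bcal_1$, the shifts $\deg_{\omega}(u)+\delta_u\omega_{n-1}$, and the identification of $\psi_1(e_u)$ with the kernel basis elements $h_u$). Your additional bookkeeping---dispatching the Cohen--Macaulay case via Proposition \ref{CMcharacterization} so that $\Bcal_1=\emptyset$, and checking minimality through the matrix entries lying in $(x_{n-1},x_n)$---is exactly the implicit content the paper leaves to the reader, and it is carried out correctly.
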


\bigskip

From this resolution, we can easily describe the weighted
Hilbert series of $R/I$.

\begin{corollary}\label{HSm=2} Let $R/I$ be a $2$-dimensional ring such that $x_n$ is a nonzero divisor, then its Hilbert series is given by:
$$HS_{R/I}(t)= \frac{\sum_{v\in\Bcal_0}t^{\deg_{\omega}(v)} - \sum_{u \in\Bcal_1}t^{\deg_{\omega}(u)+\delta_u w_{n-1}}}{(1-t^{\omega_{n-1}})(1-t^{\omega_{n}})}$$
\end{corollary}

\noindent In the following example we show how to compute the Noether resolution and the weighted Hilbert series of 
the graded coordinate ring of a surface in $\A_K^4$.

\begin{example}\label{ejemplo_m2}Let $I$ be the defining ideal of the surface of $\A_K^4$ 
parametrically defined by  $f_1:= s^3+s^2t,f_2:= t^4+st^3,f_3:=s^2, f_4:=t^2
\in K[s,t]$. Using {\sc Singular} \cite{DGPS}, {\sc CoCoA}
\cite{cocoa5} or  {\sc Macaulay 2}  \cite{macaulay} we obtain that whenever
$K$ is a characteristic $0$ field, the polynomials
$\{g_1,g_2,g_3,g_4\}$ constitute a minimal Gr\"obner basis of its
defining ideal with respect to $>_{\omega}$ with $\omega =
(3,4,2,2)$, where $g_1 := 2x_2x_3^2-x_1^2x_4+x_3^3x_4-x_3^2x_4^2, \
g_2 := x_1^4-2x_1^2x_3^3+x_3^6-2x_1^2x_3^2x_4-2x_3^5x_4+x_3^4x_4^2,
\ g_3 := x_2^2-2x_2x_4^2-x_3x_4^3+x_4^4$ and $g_4 :=
2x_1^2x_2-x_1^2x_3x_4+x_3^4x_4-3x_1^2x_4^2-2x_3^3x_4^2+x_3^2x_4^3.$
In particular, $$\ini(I) = (x_2x_3^2, x_1^4, x_2^2, x_1^2 x_2).$$
Then, we obtain that 
\begin{itemize} \item $\Bcal_0 =
\{u_1,\ldots,u_6\}$ with $u_1:=1,\, u_2:=x_1,\,u_3:=x_2,\,
u_4:=x_1^2,\,u_5:=x_1x_2,\,u_6:=x_1^3,$ \item $J=(x_2,x_1^4) \subset K[x_1,x_2,x_3,x_4]$, and
\item $\Bcal_1= \{u_3\}$. \end{itemize} 
Since $x_3$ divides a minimal generator of $\ini(I)$, by Proposition  
\ref{CMcharacterization} we deduce that $R/I$ is not Cohen-Macaulay.
We compute $\delta_3 = {\rm min}\{\delta \, \vert \, u_3
x_{3}^{\delta} \in \ini(I)\}$ and get that $\delta_3 = 2$ and that
$r_3=-x_4u_4+(x_3^3x_4-x_3^2x_4^2)u_1$ is the remainder of the division
of $u_3 x_3^2$ by $\mathcal G$. Hence, following Theorem
\ref{algorithm_NoetherResolution}, we obtain the Noether resolution or
$R/I$:

$$\mathcal F: 0 \xrightarrow{} A(-8)  \xrightarrow{\psi}  \begin{array}{c} A
\oplus  A(-3)  \oplus  A(-4)  \oplus  \\ \oplus A(-6)  \oplus A(-7)
\oplus A(-9)\end{array} \xrightarrow{}   R/I \xrightarrow{}
0,$$ where $\psi$  is given by the matrix $$\left( \begin{array}{c}
-x_3^3x_4+x_3^2x_4^2 \\
 0 \\
x_3^2 \\
x_4 \\
 0 \\
 0 \\
 \end{array} \right)$$
Moreover, by Corollary \ref{HSm=2}, we obtain that the
weighted Hilbert series of $R/I$ is
 $$HS_{R/I}(t) = \frac{1+t^3+t^4+t^6+t^7-t^8+t^9}{(1-t^2)^2}.$$

If we consider the same parametric surface over an infinite field of
characteristic $2$, we obtain that $\{x_1^2+x_3^3+x_3^2x_4,
x_2^2+x_3x_4^3+x_4^4\}$ is a minimal Gr\"obner basis of $I$ with
respect to $>_{\omega}$, the weighted degree reverse lexicographic order
with $\omega = (3,4,2,2)$. Then we have that
$$\Bcal_0 = \{v_1:=1,v_2:=x_1,v_3:=x_2,v_4:=x_1x_2\},$$
and $\Bcal_1 = \emptyset$, so $R/I$ is Cohen-Macaulay.
Moreover, we also obtain the Noether resolution of $R/I$

$$\mathcal F': 0 \xrightarrow{}  A
\oplus  A(-3)  \oplus  A(-4)  \oplus A(-7)
\xrightarrow{}  R/I \xrightarrow{}
0$$
and the weighted Hilbert series of $R/I$ is
$$HS_{R/I}(t) = \frac{1+t^3+t^4+t^7}{(1-t^2)^2}.$$
\end{example}

\bigskip

To end this section, we consider the particular case where $I$ is 
standard graded homogeneous, i.e., $\omega = (1,\ldots,1)$. In this setting, we obtain a formula for the Castelnuovo-Mumford regularity of $R/I$ in terms of $\ini(I)$ or, more precisely, in terms of 
$\Bcal_0$ and $\Bcal_1$.  This formula is equivalent to that of \cite[Theorem 2.7]{BerGi99} provided $x_n$ is a nonzero divisor of $R/I$.

\begin{corollary}\label{regularity} Let $R/I$ be a $2$-dimensional standard graded ring such that $x_n$ is a nonzero divisor. Then, 
$$\reg(R/I)= \max\{\deg(v),\deg(u)+\delta_u-1\ \vert\ v\in\Bcal_0,u\in\Bcal_1\}$$
\end{corollary}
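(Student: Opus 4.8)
The plan is to specialize the Noether resolution of Theorem~\ref{algorithm_NoetherResolution} to the standard graded situation and then read the regularity off its twists. Since $I$ is standard graded we have $\omega = (1,\dots,1)$, so $\deg_\omega$ is the ordinary degree and $\omega_{n-1} = 1$; moreover the hypothesis that $x_n$ is a nonzero divisor is exactly what makes Theorem~\ref{algorithm_NoetherResolution} applicable. Thus the resolution becomes
$$0 \longrightarrow \oplus_{u \in \Bcal_1} A(-\deg(u)-\delta_u) \buildrel{\psi_1}\over{\longrightarrow} \oplus_{v \in \Bcal_0} A(-\deg(v)) \buildrel{\psi_0}\over{\longrightarrow} R/I \longrightarrow 0,$$
so the shifts are $s_{0,v} = \deg(v)$ for $v \in \Bcal_0$ and $s_{1,u} = \deg(u) + \delta_u$ for $u \in \Bcal_1$, and the projective dimension of $R/I$ over $A$ is $p \le 1$.

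The key step is the identity ${\rm reg}(R/I) = \max\{s_{i,v} - i \mid 0 \le i \le p,\ v \in \Bcal_i\}$ recalled in the introduction; once this is available, substituting the shifts above produces the contributions $\deg(v)$ from the $i=0$ summands and $\deg(u) + \delta_u - 1$ from the $i=1$ summands, which is precisely the asserted maximum. When $R/I$ is Cohen-Macaulay one has $\Bcal_1 = \emptyset$ and the formula degenerates to $\max\{\deg(v) \mid v \in \Bcal_0\}$, so the statement covers that case as well.

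To keep the argument self-contained I would justify that identity in two moves. First, Castelnuovo-Mumford regularity is intrinsic to the graded module $R/I$, being computable from local cohomology via ${\rm reg}(R/I) = \max_i\{\End(H^i_{\mfrak}(R/I)) + i\}$. Because $A \hookrightarrow R/I$ is a finite (integral) extension, the extension of $(x_{n-1},x_n)$ to $R/I$ has the same radical as the image of the full irrelevant ideal $(x_1,\dots,x_n)$; since local cohomology depends only on the radical of the defining ideal, the modules $H^i_{\mfrak}(R/I)$ agree whether computed over $A$ or over $R$, whence ${\rm reg}_A(R/I) = {\rm reg}_R(R/I)$. Second, over the two-variable polynomial ring $A$ the regularity of a finitely generated graded module is read off the twists of any minimal graded free resolution by the standard Betti-number characterization ${\rm reg}_A(M) = \max\{s_{i,v} - i\}$; applying it to the resolution above yields the displayed formula.

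The main obstacle is the invariance ${\rm reg}_A(R/I) = {\rm reg}_R(R/I)$ under passage to the Noether normalization, i.e.\ verifying that computing local cohomology with the smaller ideal $(x_{n-1},x_n)$ returns the same modules as with $(x_1,\dots,x_n)$; this rests on the integrality of $A \hookrightarrow R/I$. Everything else is the routine reading of regularity from a minimal free resolution together with a direct substitution of the explicit shifts coming from Theorem~\ref{algorithm_NoetherResolution}.
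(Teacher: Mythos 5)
Your proof is correct and follows essentially the same route as the paper: the corollary is meant to be read directly off Theorem \ref{algorithm_NoetherResolution} with $\omega=(1,\dots,1)$, using the identity ${\rm reg}(R/I)=\max\{s_{i,v}-i \ \vert\ 0\leq i\leq p,\ v\in\Bcal_i\}$ that the paper states in its introduction, which gives the contributions $\deg(v)$ and $\deg(u)+\delta_u-1$ exactly as you compute. Your extra two-step justification of that identity (regularity is intrinsic via local cohomology and unchanged under the finite extension $A\hookrightarrow R/I$, then the Betti-number characterization of regularity over $A$) correctly fills in a fact the paper takes for granted rather than departing from its approach.
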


\medskip

In the following example we apply all the results of this section.

\begin{example}
Let $K$ be a characteristic zero field  and let us consider the projective curve $\Ccal$ of $\mathbb{P}_K^4$ parametrically defined by:
$$x_1=s^3t^5-st^7,x_2=s^7 t, x_3=s^4t^4,x_4=s^8,x_5=t^8.$$ A direct computation
with {\sc Singular}, {\sc CoCoA} or  {\sc Macaulay 2} yields that a  minimal Gr\"obner basis $\mathcal G$ of the defining ideal $I \subset R = K[x_1,\ldots,x_5]$ of $\Ccal$ with
respect to the degree reverse lexicographic order consists of $10$
elements and that
$$\ini(I)=(x_1^4,x_2^4,x_1^3x_3,x_1x_3x_4^2,x_1^2x_2,x_1x_2^2,x_1x_2x_3,x_2^2x_3,x_1^2x_4,x_3^2).$$

\noindent Then, we obtain that the set $\Bcal_0$ is the following

\begin{center}$\Bcal_0 = \{u_1:=1, u_2:=x_1,u_3:=x_2,u_4:=x_3,u_5:=x_1^2,u_6:=x_1x_2,u_7:=x_2^2,$ \\ $u_8:=x_1x_3,
u_9:=x_2x_3,u_{10}:=x_1^3,u_{11}:=x_2^3,u_{12}:=x_1^2x_3\}$\end{center}
and the ideal $J$ is
$$J=(x_1^2,x_1x_3,x_3^2,x_2^2x_3,x_2^4) \subset R.$$

Thus, $\Bcal_1 = \{u_5, u_8, u_{10}, u_{12}\}$. For $i \in \{
5,8,10,12\}$ we compute $\delta_i$, the minimum integer such that $u_i
x_4^{\delta_i}\in\ini(I)$ and get that $\delta_4 = \delta_{10} =
\delta_{12} = 1$ and $\delta_8 = 2$. If we set $r_i$ the remainder
of the division of $u_i x_4^{\delta_i}$ for all $i \in
\{4,8,10,12\}$, we get that

\begin{itemize}
\item $r_4 = - x_4x_5^2 b_1 + 2 x_4x_5 b_4 + x_5 b_6 + x_5 b_7$,
\item $r_8 =  x_4^2x_5 b_3+ x_5 b_{11}$,
\item $r_{10} = x_4^2x_5 b_2 + 3x_4x_5 b_8 + (x_5^2-x_4x_5)b_9$, and
\item $r_{12} = x_4^2x_5^2 b_1+ x_4x_5 b_6 + x_5^2 b_7.$
\end{itemize}

Hence, we obtain the following  minimal graded free resolution of
$R/I$


$$\mathcal F: 0 \xrightarrow{}  A(-3)  \oplus  A^3(-4)  \xrightarrow{\psi}   A \oplus  A^3(-1)  A^5(-2) \oplus A^3(-3)
\xrightarrow{}  R/I \xrightarrow{} 0,$$

 where $\psi$ is given by the matrix
 \begin{center}
 \[ \left( \begin{array}{cccc}
x_4x_5^2 & 0 & 0 & -x_4^2x_5^2 \\
0 & 0 & -x_4^2x_5 & 0 \\
0 & 0 & 0 & 0 \\
-2x_4x_5 & -x_4^2x_5 & 0 & 0 \\
x_4 & 0 & 0 & 0 \\
-x_5 & 0 & 0 & -x_4x_5 \\
-x_5 & 0 & 0 & -x_5^2 \\
0 & x_4^2 & -3x_4x_5 & 0 \\
0 & 0 &  x_4x_5-x_5^2 & 0 \\
0 & 0 & x_4 & 0 \\
0 & -x_5 & 0 & 0 \\
0 & 0 & 0 & x_4 \end{array} \right)\]
 \end{center}

Moreover, the Hilbert series of $R/I$ is 
$$HS_{R/I}(t) = \frac{1+3t+5t^2+2t^3-3t^4}{(1-t)^2}.$$
and ${\rm reg}(R/I)=\max\{3,4-1\}=3$.
\end{example}


\section{Noether resolution. Simplicial semigroup rings}

This section concerns the study of Noether resolutions in simplicial semigroup rings $R/I$, i.e., whenever $I =
I_{\Acal}$ with $\Acal = \{a_1,\ldots,a_n\} \subset \N^d$ and
$a_{n-d+i} = w_{n-d+i} e_i$ for all $i \in \{1,\ldots,d\}$, where
$\{e_1,\ldots,e_d\}$ is the canonical basis of $\N^d$.  In this setting, $R/I_{\Acal}$ is isomorphic to the semigroup ring $K[\Scal]$, where $\Scal$ is the simplicial semigroup generated by $\Acal$. When $K$ is infinite, $I_{\Acal}$ is the vanishing ideal of the variety given parametrically
by $x_i := t^{a_i}$ for all $i \in \{1,\ldots,n\}$ (see, e.g., \cite{Villarreal2}) and, hence, $K[\Scal]$ is the coordinate ring of a parametric variety.
 In this section we study the
multigraded Noether resolution of $K[\Scal]$ with respect to the
multigrading ${\rm deg}_{\Scal}(x_i) = a_i \in \Scal$; namely,
$$\Fcal: 0 \longrightarrow  \oplus_{s \in \Scal_p} A \cdot
s  \buildrel{\psi_p}\over{ \longrightarrow} \cdots
\buildrel{\psi_1}\over{ \longrightarrow}  \oplus_{s \in \Scal_0} A
\cdot s \buildrel{\psi_0}\over{ \longrightarrow} K[\Scal]
\longrightarrow 0.$$ where $\Scal_i \subset \Scal$ for all $i \in
\{0,\ldots,p\}$. We observe that this multigrading is a refinement
of the grading given by $\omega = (\omega_1,\ldots,\omega_n)$ with
$\omega_i := \sum_{j=1}^d a_{ij} \in \Z^+$; thus, $I_{\Acal}$ is
$\omega$-homogeneous and the results of the previous section also
apply here.

Our objective is to provide a description of this resolution in
terms of the semigroup $\Scal$. We completely achieve this goal when $K[\Scal]$ is Cohen-Macaulay (which includes the case $d = 1$)
and also when $d = 2$.

For any value of $d \geq 1$, the first step of the resolution corresponds
to a minimal set of generators of $K[\Scal]$ as $A$-module and is
given by the following well known result.

\begin{proposition}\label{S0} Let $K[\Scal]$ be a simplicial semigroup ring. Then, $$\Scal_0 = \left\{s\in \Scal \ \vert \ s- a_{i} \notin\Scal {\rm \
for \ all\ } i\in\{n-d+1,\ldots,n\}\right\}.$$ Moreover, $\psi_0: \oplus_{s
\in \Scal_0} A \cdot s \longrightarrow K[\Scal]$ is the homomorphism
of $A$-modules induced by $e_s \mapsto t^s$, where $\{e_s\, \vert
\, s \in \Scal_0\}$ is the canonical basis of $\oplus_{s \in
\Scal_0} A \cdot s$.
\end{proposition}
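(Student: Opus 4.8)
The plan is to apply the graded Nakayama lemma. Since $A = K[x_{n-d+1},\ldots,x_n]$ is a Noether normalization of $K[\Scal]$, the ring $K[\Scal]$ is a finitely generated $A$-module, and it is multigraded by $\Scal$ once we set $\deg_{\Scal}(t^s) = s$; under the identification $x_{n-d+i} = t^{a_{n-d+i}}$, the variable $x_{n-d+i}$ acts on $K[\Scal]$ as multiplication by $t^{a_{n-d+i}}$, i.e.\ it shifts the $\Scal$-degree by $a_{n-d+i}$. Writing $A_+ := (x_{n-d+1},\ldots,x_n)$ for the irrelevant ideal of $A$, the graded Nakayama lemma reduces the statement to exhibiting a $K$-basis of the quotient $K[\Scal]/A_+ K[\Scal]$: a family of elements of $K[\Scal]$ is a minimal system of $A$-module generators precisely when its classes form a $K$-basis of this quotient.

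First I would describe $A_+ K[\Scal]$ as a $K$-vector space. Because $\{t^s \mid s \in \Scal\}$ is a $K$-basis of $K[\Scal]$ and $x_{n-d+i}\cdot t^{s'} = t^{s'+a_{n-d+i}}$, one has
$$A_+ K[\Scal] = \sum_{i=1}^{d} x_{n-d+i}\, K[\Scal] = \mathrm{span}_K\bigl\{\, t^{s} \mid s \in \Scal,\ s - a_{n-d+i} \in \Scal \text{ for some } i \in \{1,\ldots,d\}\,\bigr\}.$$
Since the full set of monomials $\{t^s \mid s \in \Scal\}$ is a basis partitioned into those $s$ satisfying the above condition and those that do not, the classes of the monomials $t^s$ with $s \in \Scal$ and $s - a_{n-d+i} \notin \Scal$ for every $i$ form a $K$-basis of $K[\Scal]/A_+ K[\Scal]$. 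These are exactly the $s \in \Scal_0$ of the statement.

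From this I would conclude by Nakayama that $\{t^s \mid s \in \Scal_0\}$ is a minimal system of $A$-module generators of $K[\Scal]$, which fixes the first free module of the resolution: the generator attached to $s$ has $\Scal$-degree $s$, so it contributes a summand $A\cdot s$, and $\psi_0$ is the induced map $e_s \mapsto t^s$. Finiteness of $\Scal_0$ is automatic, as $K[\Scal]$ is a finite $A$-module. The computation is essentially routine; the only point requiring care is the description of $A_+ K[\Scal]$, where both inclusions must be checked: the forward one is immediate from $x_{n-d+i}\, t^{s'} = t^{s'+a_{n-d+i}}$, and the reverse follows by writing any such $t^s$ as $x_{n-d+i}\, t^{s-a_{n-d+i}}$ with $s-a_{n-d+i} \in \Scal$. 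No substantial obstruction arises, consistent with this being a well-known result.
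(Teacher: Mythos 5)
Your proof is correct. Note, however, that the paper itself offers no proof of Proposition \ref{S0}: it is introduced as a ``well known result,'' so there is no argument to compare against line by line. The natural comparison is with the paper's proof of the general-case analogue, Proposition \ref{B0}, which is genuinely different in method: there the set $\Bcal_0$ is extracted from the initial ideal $\ini(I+(x_{n-d+1},\ldots,x_n))$ with respect to the weighted degrevlex order, and generation is proved by a reduction argument on initial terms (write $x^\beta - \sum \lambda_i x^{\alpha_i} \in I + (x_{n-d+1},\ldots,x_n)$ and force it to vanish). Your route instead works intrinsically with the semigroup: you identify $A_+K[\Scal]$ as the span of the monomials $t^s$ with $s - a_{n-d+i} \in \Scal$ for some $i$ --- which is clean precisely because $A_+K[\Scal]$ is a monomial subspace of the monomial basis $\{t^s \mid s \in \Scal\}$ --- and then invoke graded Nakayama (legitimate here, since $\Scal \subseteq \N^d$ gives a positive multigrading and $K[\Scal]$ is a finite $A$-module). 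Your approach buys exactly what the authors say they want in Section 3: a description depending only on $\Scal$, manifestly independent of the characteristic of $K$ and of any Gr\"obner basis computation; the paper's Gr\"obner-theoretic approach in Proposition \ref{B0} buys algorithmic computability and applies to arbitrary $\omega$-homogeneous ideals, not just toric ones. The only point worth making explicit if you write this up is the standard fact that the multidegrees of a minimal homogeneous generating set are invariants of the module (the multigraded Betti numbers $\beta_{0,s}$), which is what lets you conclude that the first free module of the minimal resolution is $\oplus_{s\in\Scal_0} A\cdot s$ with $\psi_0(e_s)=t^s$.
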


Proposition \ref{S0} gives us the whole multigraded Noether resolution of $K[\Scal]$ when
$K[\Scal]$ is Cohen-Macaulay.

\medskip
In  \cite[Theorem 1]{GotoSuzukiWatanabe76} (see also \cite[Theorem
6.4]{Stanley}), the authors provide a characterization of the
Cohen-Macaulay property of $K[\Scal]$. In the
following result we are proving an equivalent result that
characterizes this property in terms of
the size of $\Scal_0$. The proof shows how to
obtain certain elements of ${\rm Ker}(\psi_0)$ and this idea will be
later exploited to describe the whole resolution when $d = 2$ and $K[\Scal]$ is not 
Cohen-Macaulay.

\begin{proposition}\label{CMcharacSemigroup}Let $\Scal$ be a simplicial semigroup as above. Set $D := \left(\prod_{i = 1}^d \omega_{n-d+i}\right) /
[\Z^d:\Z\Scal]$, where $[\Z^d:\Z\Scal]$ denotes the index of the
group generated by $\Scal$ in $\Z^d$. Then, $K[\Scal]$ is
Cohen-Macaulay $\Longleftrightarrow$ $\vert \Scal_0 \vert= D$.
\end{proposition}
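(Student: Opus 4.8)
The plan is to reduce the statement to a rank computation. As noted before Proposition~\ref{CMcharacterization}, since $A$ is a Noether normalization of $K[\Scal]$ the Auslander--Buchsbaum formula gives that $K[\Scal]$ is Cohen--Macaulay if and only if it is a free $A$-module. By Proposition~\ref{S0} the set $\{t^s \mid s \in \Scal_0\}$ is a minimal system of generators of $K[\Scal]$ as an $A$-module, so $\vert \Scal_0 \vert = \mu$, the minimal number of generators. For a finitely generated graded module over the graded polynomial ring $A$ one always has $\mu \geq {\rm rank}_A K[\Scal]$, with equality if and only if the module is free: a minimal generating set gives a graded surjection $A^{\mu} \to K[\Scal]$ whose kernel is torsion-free (being a submodule of $A^{\mu}$) of rank $\mu - {\rm rank}_A K[\Scal]$, and this kernel vanishes exactly when the two numbers agree. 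Hence everything reduces to proving ${\rm rank}_A K[\Scal] = D$; granting this, $\vert\Scal_0\vert \geq D$ always, with equality precisely when $K[\Scal]$ is free, that is, Cohen--Macaulay.

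To compute the rank I would pass to fraction fields. Since $A \hookrightarrow K[\Scal]$ is a finite extension of domains, localizing at $S := A \setminus \{0\}$ turns $K[\Scal]$ into a domain that is integral over the field ${\rm Frac}(A)$, hence into ${\rm Frac}(K[\Scal])$; therefore ${\rm rank}_A K[\Scal] = [{\rm Frac}(K[\Scal]) : {\rm Frac}(A)]$. Writing $M := \Z a_{n-d+1} + \cdots + \Z a_n = \bigoplus_{i=1}^{d} \omega_{n-d+i}\,\Z e_i \subseteq \Z^d$, one has ${\rm Frac}(K[\Scal]) = K(t^g \mid g \in \Z\Scal)$ and ${\rm Frac}(A) = K(t^m \mid m \in M)$.

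The key step is the identity $[K(t^g \mid g \in \Z\Scal) : K(t^m \mid m \in M)] = [\Z\Scal : M]$. I would prove it by putting the pair $M \subseteq \Z\Scal$ of full-rank lattices into Smith normal form: choosing a basis $f_1,\ldots,f_d$ of $\Z\Scal$ with $d_1 f_1,\ldots,d_d f_d$ a basis of $M$, the extension becomes $K(u_1,\ldots,u_d)/K(u_1^{d_1},\ldots,u_d^{d_d})$ with $u_i = t^{f_i}$, which is a tower of $d$ pure extensions, each of the form $F(u)/F(u^{d_i})$ with $u^{d_i}$ transcendental over $F$; since $X^{d_i} - u^{d_i}$ is then irreducible, each step has degree $d_i$ and the total degree is $\prod_i d_i = [\Z\Scal : M]$. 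Finally, from $[\Z^d : M] = [\Z^d : \Z\Scal]\,[\Z\Scal : M]$ and $[\Z^d : M] = \prod_{i=1}^{d}\omega_{n-d+i}$ I obtain ${\rm rank}_A K[\Scal] = [\Z\Scal : M] = \bigl(\prod_{i=1}^{d}\omega_{n-d+i}\bigr)/[\Z^d:\Z\Scal] = D$, which completes the argument.

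The main obstacle is exactly this field-degree computation, namely justifying that the degree of the extension equals the lattice index $[\Z\Scal : M]$; everything else is formal. I expect the paper to prefer a more constructive phrasing of the ``only if'' direction that anticipates the case $d=2$: when $\vert\Scal_0\vert > D = \vert \Z\Scal/M \vert$, the pigeonhole principle yields distinct $s, s' \in \Scal_0$ with $s - s' \in M$; choosing monomials $x^{\beta}, x^{\gamma} \in A$ with $s + \deg_{\Scal}(x^{\beta}) = s' + \deg_{\Scal}(x^{\gamma})$ produces the nonzero element $x^{\beta} e_s - x^{\gamma} e_{s'} \in {\rm Ker}(\psi_0)$, exhibiting explicitly that $\psi_0$ is not injective and hence that $K[\Scal]$ is not free. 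These explicit kernel elements are the ones the later results on $d=2$ reuse.
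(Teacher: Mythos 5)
Your argument is correct, and it takes a genuinely different route from the paper's. You reduce everything to the rank identity ${\rm rank}_A K[\Scal]=D$, obtained by passing to fraction fields and computing $[{\rm Frac}(K[\Scal]):{\rm Frac}(A)]$ as the lattice index $[\Z\Scal:M]$, where $M:=\bigoplus_{i=1}^d \omega_{n-d+i}\Z e_i$, via Smith normal form and a tower of Eisenstein-irreducible pure extensions; combined with the observation that a finitely generated graded $A$-module is free precisely when its minimal number of generators equals its rank, this gives $\vert\Scal_0\vert\geq D$ with equality exactly in the Cohen--Macaulay case. The paper instead stays entirely inside the $\Scal$-multigraded picture: it partitions $\Z\Scal$ into the $D$ congruence classes modulo $M$, notes that every class meets $\Scal_0$ (hence $\vert\Scal_0\vert\geq D$), and settles both implications by looking at $\psi_0$ directly --- if $\vert\Scal_0\vert>D$, two elements $u\sim v$ of $\Scal_0$ give the explicit kernel element $x_{n-d+1}^{\lambda_1}\cdots x_n^{\lambda_d}e_u-x_{n-d+1}^{\mu_1}\cdots x_n^{\mu_d}e_v$, which is precisely the pigeonhole construction you sketch in your last paragraph, while if $\vert\Scal_0\vert=D$ every class is a singleton, so every multihomogeneous element of $\oplus_{s\in\Scal_0}A\cdot s$ is a monomial, monomials map to nonzero monomials, and $\psi_0$ is injective. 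Your route buys a clean conceptual interpretation of $D$ as the generic rank of $K[\Scal]$ over its Noether normalization, so both the inequality and the criterion follow from general module theory; the paper's route is self-contained, needs no field theory, works uniformly in any characteristic without invoking irreducibility of $X^{d_i}-v$, and produces the explicit kernel elements that are reused later, notably in the identification $\Scal_1=\Delta$ for $d=2$ in Theorem \ref{S_1}.
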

\begin{proof}
By Auslander-Buchsbaum formula we deduce that $K[\Scal]$ is
Cohen-Macaulay if and only if $\psi_0$ is injective, where $\psi_0$
is the morphism given in Proposition \ref{S0}. We are proving that
$\psi_0$ is injective if and only if $\vert \Scal_0 \vert = D$. We
define an equivalence relation on $\Z^d$, $u\sim v\
\Longleftrightarrow\
u-v\in\Z\{\omega_{n-d+1}e_1,\ldots,\omega_{n}e_d\}$. This relation
partitions $\Z \Scal$ into $D = [\Z\Scal:\Z\{\omega_{n-d+1}e_1,\ldots,\omega_{n}e_d\}]$ equivalence classes. Since 
$$\Z^d/\Z\,\Scal \simeq \left(\Z^d/\Z\{\omega_{n-d+1}e_1,\ldots,\omega_{n}e_d\}\right)/\left(\Z\,\Scal/\Z\{\omega_{n-d+1}e_1,\ldots,\omega_{n}e_d\}\right),$$ we get that  $D = \left(\prod_{i=1}^d \omega_{n-d+i}\right)/[\Z^d:\Z\Scal]$. Moreover, the
following two facts are easy to check: for every equivalence class
there exists an element $b \in \Scal_0$, and $\Scal = \Scal_0 + \N
\{\omega_{n-d+1}e_1,\ldots,\omega_{n}e_d\}$. This proves that $\vert
\Scal_0 \vert \geq D$.

Assume that  $\vert \Scal_0 \vert > D$, then there exist
$u,v\in\Scal_0$ such that $u\sim v$ or, equivalently,
$u+\sum_{i=1}^d\lambda_i\omega_{n-d+i}e_i=v +
\sum_{i=1}^d\mu_i\omega_{n-d+i}e_i$ for some $\lambda_i,\mu_i\in\N$
for all $i\in\{1,\ldots,d\}$. Thus $x_{n-d+1}^{\lambda_1}\cdots
x_n^{\lambda_d} e_u - x_{n-d+1}^{\mu_1}\cdots x_n^{\mu_d}e_v\in{\rm
Ker}(\psi_0)$ and $\psi_0$ is not injective. \hfill\break 
Assume now that $\vert
\Scal_0 \vert = D$, then for every $s_1,s_2 \in \Scal_0$, $s_1 \neq
s_2$, we have that $s_1 \not\sim s_2$. As a consequence, an element
$\rho \in \oplus_{s \in \Scal_0} A \cdot s$ is homogeneous if and
only if  it is a monomial, i.e., $\rho = c x^{\alpha} e_s$ for some
$c \in K$, $x^{\alpha} \in A$ and $s \in \Scal_0$. Since the image
by $\psi_0$ of a monomial is another monomial, then there are no
homogeneous elements in ${\rm Ker}(\psi_0)$ different from $0$, so
$\psi_0$ is injective.\end{proof}

\medskip

From now on suppose that $K[\Scal]$ is a $2$-dimensional non Cohen-Macaulay semigroup ring.
In this setting, we consider the set $$\Delta := \left\{s \in \Scal
\ \vert \ s - a_{n-1}, s - a_n \in\Scal\ {\rm and}\ s - a_n -
a_{n-1} \notin\Scal\right\}.$$  The set $\Delta$ or slight variants of it has been considered by other authors (see, e.g., \cite{GotoSuzukiWatanabe76,Stanley,TH}).
We claim that $\Delta$ has
exactly $\vert \Scal_0 \vert - D$ elements.  Indeed, if we consider the equivalence relation $\sim$ of Proposition \ref{CMcharacSemigroup}, then $\sim$ partitions $\Z \Scal$ in $D$ classes $C_1,\ldots,C_D$ and it is straightforward to check that $\vert \Delta \cap C_i\vert  = \vert \Scal_0 \cap C_i\vert  - 1$ for all $i \in \{1,\ldots,D\}$. From here, we easily deduce that $\vert \Delta\vert  = \vert \Scal_0\vert  - D$. 
Hence, a direct consequence of Proposition \ref{CMcharacSemigroup} is that $\Delta$ is nonempty because $K[\Scal]$ is not Cohen-Macaulay.
Furthermore, as Theorem
\ref{S_1} shows, the set $\Delta$ is not only useful to characterize
the Cohen-Macaulay property but also provides the set of shifts in
the second step of the multigraded Noether resolution of $K[\Scal]$.

\begin{theorem}\label{S_1}
 Let $K[\Scal]$ be a $2$-dimensional semigroup ring and let
$$\Delta = \left\{s \in \Scal \ \vert \ s - a_{n-1}, s - a_n
\in\Scal\ {\rm and}\ s - a_n - a_{n-1} \notin\Scal\right\},$$
as above. Then, $\Scal_1 = \Delta$.
\end{theorem}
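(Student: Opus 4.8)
The plan is to analyze ${\rm Ker}(\psi_0)$ directly through the $\Scal$-multigrading. Recall from the proof of Proposition~\ref{CMcharacSemigroup} that $\Scal = \Scal_0 + \N a_{n-1} + \N a_n$, so every $w \in \Scal$ has finitely many representations $w = b + \lambda a_{n-1} + \mu a_n$ with $b \in \Scal_0$ and $\lambda,\mu \in \N$; write $P(w)$ for this set of triples $(b,\lambda,\mu)$. In the free module $\oplus_{s \in \Scal_0} A\cdot s$ the component of multidegree $w$ is spanned by the monomials $x_{n-1}^{\lambda} x_n^{\mu} e_b$ with $(b,\lambda,\mu) \in P(w)$, and $\psi_0$ sends each of them to $t^w$; hence the multidegree-$w$ part of ${\rm Ker}(\psi_0)$ is the space of relations $\sum c_p m_p$ with $\sum c_p = 0$, of $K$-dimension $|P(w)| - 1$. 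Since $K[\Scal]$ is a $2$-dimensional domain that is not Cohen-Macaulay, its depth is $1$ and, by Auslander-Buchsbaum, its projective dimension as $A$-module is $1$, so ${\rm Ker}(\psi_0)$ is a \emph{free} $A$-module. Consequently $\Scal_1$ is exactly the multiset of multidegrees $s$ at which $\big({\rm Ker}(\psi_0)/\mfrak\,{\rm Ker}(\psi_0)\big)_s \neq 0$, counted with $K$-dimension, where $\mfrak = (x_{n-1},x_n)$ is the graded maximal ideal of $A$.

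The key structural observation is a reformulation of membership in $\Delta$. I would check that $s \in \Delta$ if and only if every representation in $P(s)$ has exactly one of $\lambda,\mu$ strictly positive, with both possibilities occurring; that is, $P(s)$ splits as a disjoint union $P_1(s) \sqcup P_2(s)$ where $P_1(s)$ collects the representations with $\lambda \geq 1$ (forcing $\mu = 0$) and $P_2(s)$ those with $\mu \geq 1$ (forcing $\lambda = 0$), both nonempty. Indeed, $s - a_{n-1} \in \Scal$ is equivalent to $P_1(s) \neq \emptyset$ and $s - a_n \in \Scal$ to $P_2(s) \neq \emptyset$, while $s - a_{n-1} - a_n \notin \Scal$ says precisely that no representation has both $\lambda \geq 1$ and $\mu \geq 1$; and $s \notin \Scal_0$ rules out the trivial representation $(s,0,0)$.

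With this in hand, the central computation is ${\rm dim}_K\big({\rm Ker}(\psi_0)/\mfrak\,{\rm Ker}(\psi_0)\big)_s$ for $s \in \Delta$. Multiplication by $x_{n-1}$ is injective and identifies ${\rm Ker}(\psi_0)_{s-a_{n-1}}$ with the relations supported on the $P_1(s)$-monomials, of dimension $|P_1(s)|-1$; likewise $x_n\,{\rm Ker}(\psi_0)_{s-a_n}$ is the space of relations supported on the $P_2(s)$-monomials, of dimension $|P_2(s)|-1$. Because $P_1(s)$ and $P_2(s)$ index disjoint sets of monomials, these two subspaces meet only in $0$, so $(\mfrak\,{\rm Ker}(\psi_0))_s = x_{n-1}{\rm Ker}(\psi_0)_{s-a_{n-1}} \oplus x_n{\rm Ker}(\psi_0)_{s-a_n}$ has dimension $(|P_1(s)|-1)+(|P_2(s)|-1) = |P(s)|-2$. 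Comparing with ${\rm dim}_K {\rm Ker}(\psi_0)_s = |P(s)|-1$ yields exactly one minimal generator in multidegree $s$, so $s \in \Scal_1$ with multiplicity one. This local computation, together with the bookkeeping that $P_1$ and $P_2$ are genuinely complementary (which is the only place the defining condition of $\Delta$ enters), is the main obstacle.

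Finally I would close by a count. As ${\rm Ker}(\psi_0)$ is free, its total number of minimal generators equals ${\rm rank}_A {\rm Ker}(\psi_0) = |\Scal_0| - {\rm rank}_A K[\Scal]$, and since $K[\Scal]$ is a domain finite over the polynomial ring $A$ one has ${\rm rank}_A K[\Scal] = [\Z\Scal : \Z\{a_{n-1},a_n\}] = D$. Thus the total number of minimal generators is $|\Scal_0| - D$, which equals $|\Delta|$ by the computation preceding the theorem. The previous step already exhibits $|\Delta|$ distinct minimal generators, one in each multidegree $s \in \Delta$, so there is no room for any further generator; hence $\Scal_1 = \Delta$.
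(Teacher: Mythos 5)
Your proof is correct, but it follows a genuinely different route from the paper's. The paper deduces Theorem~\ref{S_1} from the Gr\"obner basis machinery of Proposition~\ref{Cmonomials}: since $I_{\Acal}$ is binomial, the remainder of each $u\,x_{n-1}^{\delta_u}$ is a single monomial $v_u x_n^{\gamma_{v_u}}$, so ${\rm Ker}(\psi_0)$ is minimally generated by the explicit binomial-type elements $h_u = x_{n-1}^{\delta_u}e_u - x_n^{\gamma_{v_u}}e_{v_u}$, and the theorem is then proved by a two-way semigroup argument identifying $\{{\rm deg}_{\Scal}(h_u)\}$ with $\Delta$. You avoid Gr\"obner bases entirely and argue multidegree by multidegree: each graded piece ${\rm Ker}(\psi_0)_w$ is the sum-zero hyperplane in the span of the $|P(w)|$ monomials lying over $t^w$; membership in $\Delta$ is exactly the condition that $P(s)$ splits as $P_1(s)\sqcup P_2(s)$ with both blocks nonempty; and this splitting gives $\dim_K\bigl({\rm Ker}(\psi_0)/\mfrak\,{\rm Ker}(\psi_0)\bigr)_s = (|P(s)|-1)-(|P(s)|-2)=1$ for $s\in\Delta$. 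Your concluding count is also sound: freeness of the kernel (Auslander--Buchsbaum, using the standing non-Cohen-Macaulay hypothesis; the Cohen-Macaulay case is vacuous since both sides are empty), additivity of rank, the lattice-index identity ${\rm rank}_A K[\Scal] = [\Z\Scal : \Z\{a_{n-1},a_n\}] = D$, and the cardinality claim $|\Delta| = |\Scal_0| - D$ established just before the theorem together leave no room for generators outside $\Delta$. What the two approaches buy is different: the paper's proof is constructive, producing the actual matrix of $\psi_1$ (needed for the explicit resolutions and examples elsewhere in the paper), whereas yours is non-constructive but more intrinsic -- it never leaves the multigraded world, shows directly that each shift in $\Delta$ occurs with multiplicity exactly one, and replaces the case analysis on semigroup elements by linear algebra on the fibers $P(s)$; its costs are the two external inputs (the rank formula and the count $|\Delta|=|\Scal_0|-D$), both standard and both available at that point in the paper.
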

\begin{proof}
Set $\Bcal_0$ the monomial basis of $R / ({\rm in}(I_{\Acal}), x_{n-1},x_n)$, 
where $\ini(I_{\Acal})$ is the initial ideal of $I_{\Acal}$ with respect to $>_{\omega}$.
For every $u = x_1^{\alpha_1} \cdots x_{n}^{\alpha_n} \in \Bcal_1$ we set $\delta_u \geq 1$ the minimum integer such that $u
x_{n-1}^{\delta_u} \in {\rm in}(I_{\Acal})$. Consider $p_u \in R$ the
remainder of $u x_{n-1}^{\delta_u}$ modulo the reduced Gr\"obner
basis of $I_{\Acal}$ with respect to $>_{\omega}$, then $u
x_{n-1}^{\delta_u} - p_u \in I_{\Acal}$. Since $I_{\Acal}$ is a binomial ideal, we
get that $p_u = x^{\gamma}$ for some $(\gamma_1,\ldots,\gamma_n) \in
\N^n$. Moreover, the condition $x^{\alpha}
> x^{\gamma}$ and the minimality of $\delta_u$ imply
that $\gamma_n > 0$ and $\gamma_{n-1} = 0$, so $x^{\gamma} = v_u
x_{n}^{\gamma_{v_u}}$ with $v_u \in \Bcal_0$. As we proved in Proposition
\ref{Cmonomials}, if we denote by $\{e_v \, \vert \, v \in \Bcal_0\}$
the canonical basis of $\oplus_{v \in \Bcal_0} A(-{\rm deg}_{\Scal}(v))$ and $h_u :=
x_{n-1}^{\delta_u} e_u - x_{n}^{\gamma_{v_u}} e_{v_u}$ for all $u
\in \Bcal_1$, then ${\rm Ker}(\psi_0)$ is the $A$-module
minimally generated by $\Ccal := \{h_u \, \vert \, u \in \Bcal_1\}.$ Let us prove that $$\{ {\rm deg}_{\Scal}(h_u)  \, \vert \, u
\in \Bcal_1\} = \{s \in \Scal \, \vert \, s - a_{n-1}, s - a_n
\in \Scal{\rm \ and \ } s - a_{n-1} - a_{n} \notin \Scal\}.$$ Take
$s = {\rm deg}_{\Scal}(h_u)$ for some $u \in \Bcal_1$, then $s
= {\rm deg}_{\Scal}(h_u) = {\rm deg}_{\Scal}(u) + \delta_u a_{n-1} =
{\rm deg}_{\Scal}(v_u) + \gamma_{v_u} a_n$. Since $\delta_u,
\gamma_{v_u} \geq 1$, we get that both $s - a_{n-1}, s - a_{n} \in
\Scal$. Moreover, if $s - a_{n-1} - a_n = \sum_{i = 1}^n \delta_i
a_i \in \Scal$, then $x_{n-1}^{\delta_u - 1} u - x^{\lambda} x_{n+1}
\in I_{\Acal}$, which contradicts the minimality of $\delta_u$.

Take now $s \in \Scal$ such that $s - a_{n-1}, s - a_n \in \Scal$
and $s - a_{n-1} - a_n \notin \Scal$. Since $s - a_{n-1},s -
a_{n}\in\Scal$, there exists $s',s'' \in\Scal_0$ and
$\gamma_1,\gamma_2, \lambda_1, \lambda_2 \in\N$ such that $s - a_n =
s' + \gamma_1 a_{n-1} + \gamma_2 a_{n}$ and  $s - a_{n+1} = s'' +
\lambda_1 a_{n-1} + \lambda_2 a_{n}$. Observe that $\gamma_2=0$,
otherwise $s - a_{n-1} - a_n = s' + \gamma_1
a_{n-1}+(\gamma_2-1)a_{n}\in\Scal$, a contradiction. Analogously
$\lambda_1 = 0$. Take $u, v  \in \Bcal_0$ such that ${\rm
deg}_{\Scal}(u) = s'$ and ${\rm deg}_{\Scal}(v) = s''$. We claim
that $u \in J$ and that $\delta_u = \gamma_1$. Indeed, $f:= u
x_{n-1}^{\gamma_1} - v x_n^{\lambda_2} \in I_{\Acal}$ and ${\rm in}(f) = u
x_{n-1}^{\gamma_1}$, so $u \in \Bcal_1$. Moreover, if there
exists $\gamma' < \delta_u$, then $s - a_{n-1} - a_n \in \Scal$, a
contradiction.
\end{proof}

One of the interests of Proposition \ref{CMcharacSemigroup} and
Theorem \ref{S_1} is that they describe multigraded Noether resolutions of
dimension $2$ semigroup rings in terms of the semigroup $\Scal$ and,
 in particular, they do not depend on the characteristic of the field $K$. 

\medskip
Now we consider the multigraded Hilbert Series of $K[\Scal]$, which
is defined by
$$HS_{K[\Scal]}(t) = \sum_{s \in\Scal}t^s =\sum_{s = (s_1,\ldots,s_d) \in\Scal} t_1^{s_1}\cdots t_d^{s_d},$$
When $d = 2$, from the description of the multigraded Noether resolution of
$K[\Scal]$ we derive an expression of its multigraded Hilbert series
in terms of $\Scal_0$ and $\Scal_1$.
\begin{corollary}\label{multiHilbert}
Let $K[\Scal]$ be a dimension $2$ semigroup ring. The multigraded Hilbert series of $K[\Scal]$ is:
$$HS_{K[\Scal]}(t)= \frac{\sum_{s \in\Scal_0}t^{s} - \sum_{s\in\Scal_1}t^{s}}{(1-t_{1}^{\omega_{n-1}})(1-t_2^{\omega_n})}.$$
\end{corollary}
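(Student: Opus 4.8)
The plan is to derive the multigraded Hilbert series directly from the multigraded Noether resolution, using the fact that the Hilbert series is an additive (Euler–Poincar\'e) invariant on short exact sequences of multigraded modules. By Proposition \ref{S0} and Theorem \ref{S_1}, when $d=2$ we have the exact sequence of multigraded $A$-modules
$$0 \longrightarrow \oplus_{s \in \Scal_1} A \cdot s \buildrel{\psi_1}\over{\longrightarrow} \oplus_{s \in \Scal_0} A \cdot s \buildrel{\psi_0}\over{\longrightarrow} K[\Scal] \longrightarrow 0,$$
where $\Scal_0$ and $\Scal_1 = \Delta$ are described combinatorially in terms of $\Scal$. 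Since $HS$ is additive on exact sequences, $HS_{K[\Scal]}(t) = \sum_{s \in \Scal_0} HS_{A\cdot s}(t) - \sum_{s \in \Scal_1} HS_{A\cdot s}(t)$.

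The key computation is therefore the multigraded Hilbert series of a single shifted copy $A \cdot s$ of the Noether normalization. First I would recall that $A = K[x_{n-1},x_n]$ with $\deg_{\Scal}(x_{n-1}) = a_{n-1} = \omega_{n-1}e_1$ and $\deg_{\Scal}(x_n) = a_n = \omega_n e_2$, so as a multigraded module $A$ has Hilbert series $\sum_{i,j \geq 0} t_1^{i\,\omega_{n-1}} t_2^{j\,\omega_n} = 1/\bigl((1-t_1^{\omega_{n-1}})(1-t_2^{\omega_n})\bigr)$. Shifting by $s \in \Scal$ multiplies this by the monomial $t^s$, giving $HS_{A\cdot s}(t) = t^s/\bigl((1-t_1^{\omega_{n-1}})(1-t_2^{\omega_n})\bigr)$. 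Substituting into the additive formula and factoring out the common denominator yields exactly
$$HS_{K[\Scal]}(t) = \frac{\sum_{s\in\Scal_0} t^s - \sum_{s\in\Scal_1} t^s}{(1-t_1^{\omega_{n-1}})(1-t_2^{\omega_n})},$$
which is the claimed expression.

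The main thing requiring care — rather than a genuine obstacle — is to verify that the shifts in the resolution are precisely the multidegrees $s$ ranging over $\Scal_0$ and $\Scal_1$, so that the convention $A\cdot s$ (shifting by $s \in \Scal$) contributes the monomial $t^s$ with the correct sign. This is exactly what Theorem \ref{S_1} guarantees: the second-step shifts are the elements of $\Delta = \Scal_1$, viewed as multidegrees $\deg_{\Scal}(h_u) = s$. I would also note that the non-Cohen--Macaulay assumption underlying Theorem \ref{S_1} is not restrictive for the formula, since in the Cohen--Macaulay case $\Scal_1 = \Delta = \emptyset$ by Proposition \ref{CMcharacSemigroup} and the numerator reduces to $\sum_{s\in\Scal_0} t^s$, so the stated formula holds uniformly for any dimension $2$ semigroup ring. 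Beyond these bookkeeping points the argument is a routine application of additivity of the Hilbert series, so no serious difficulty is expected.
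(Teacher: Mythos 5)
Your proposal is correct and is essentially the paper's argument: the corollary is stated there as an immediate consequence of the multigraded Noether resolution (Proposition \ref{S0} and Theorem \ref{S_1}) together with additivity of the Hilbert series, which is precisely what you carry out, including the correct computation $HS_{A\cdot s}(t)=t^s/\bigl((1-t_1^{\omega_{n-1}})(1-t_2^{\omega_n})\bigr)$ and the observation that the Cohen--Macaulay case ($\Scal_1=\emptyset$) is covered uniformly.
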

\medskip

\begin{remark}\label{multtostandardgraded}
When $K[\Scal]$ is a two dimensional semigroup ring and $\Scal$ is generated by the
set $\Acal = \{a_1,\ldots,a_n\} \subset \N^2$, if we set $\omega = (\omega_1,\ldots,\omega_n) \in \N^n$ with
$\omega_i := a_{i,1} + a_{i,2}$ for all $i \in \{1,\ldots,n\}$,
then $I_{\Acal}$ is $\omega$-homogeneous, as observed at the beginning of this section. The Noether resolution of $K[\Scal]$ with respect to this grading is easily obtained 
from the multigraded one. Indeed, it is given by
the following expression:
$$\mathcal F: 0\longrightarrow  \oplus_{(b_1,b_2) \in \Scal_1} A(-(b_1+b_2))
\buildrel{\psi_1}\over{ \longrightarrow}   \oplus_{(b_1,b_2) \in \Scal_0} A(-(b_1+b_2)) \buildrel{\psi_0}\over{\longrightarrow}
K[\Scal] \longrightarrow 0.$$
In addition, the weighted Hilbert series of $K[\Scal]$ is obtained from
the multigraded one by just considering the transformation
$t_1^{\alpha_1} t_2^{\alpha_2} \mapsto t^{\alpha_1 + 
\alpha_2}$.

 When $\omega_1 = \cdots = \omega_n$, then $I_{\Acal}$ is a
homogeneous ideal. In this setting, the Noether resolution with respect to the standard grading is 
$$\mathcal F: 0\longrightarrow  \oplus_{(b_1,b_2) \in \Scal_1} A(-(b_1+b_2)/\omega_1)
\buildrel{\psi_1}\over{ \longrightarrow}   \oplus_{(b_1,b_2) \in \Scal_0} A(-(b_1+b_2)/\omega_1) \buildrel{\psi_0}\over{\longrightarrow}
K[\Scal] \longrightarrow 0.$$ 
Thus, the Castelnuovo-Mumford regularity of $K[\Scal]$ is \begin{equation}\label{formularegularidad} \reg(K[\Scal])=\max\left( \left\{\frac{b_1+b_2}{\omega_1}\ \vert \ (b_1,b_2)
\in\Scal_0\right\} \cup \left\{\frac{b_1+b_2}{\omega_1} - 1\ \vert\
(b_1,b_2) \in\Scal_1\right\}\right). \end{equation}
Moreover, the Hilbert series of $K[\Scal]$ is obtained from
the multigraded Hilbert series by just considering the transformation
$t_1^{\alpha_1} t_2^{\alpha_2} \mapsto t^{(\alpha_1 + 
\alpha_2)/\omega_1}$. 
\end{remark}

\section{Macaulayfication of simplicial semigroup rings}
 Given $K[\Scal]$ a simplicial semigroup ring, the semigroup ring $K[\Scal']$ is the {\it Macaulayfication} of $K[\Scal]$ if the 
three following conditions are satisfied: \begin{enumerate} \item $\Scal \subset \Scal'$, \item $K[\Scal']$ is Cohen-Macaulay, and
\item the Krull dimension of $K[\Scal' \setminus \Scal]$ is $\leq d-2$, where $d$ is the Krull dimension of $K[\Scal]$. 
\end{enumerate}
The existence and uniqueness of a $K[\Scal']$ fulfilling the previous properties for simplicial semigroup rings is guaranteed by \cite[Theorem 5]{Morales07}.
In this section we describe explicitly the Macaulayfication of any
simplicial semigroup ring in terms of the set $\Scal_0$. For this
purpose we consider the same equivalence relation in $\Z^d$ as in proof of Proposition \ref{CMcharacSemigroup}, namely, for $s_1,s_2 \in \Z^d$
$$s_1\sim s_2 \Longleftrightarrow s_1-s_2\in\Z\{\omega_{n-d+1}e_1,\ldots,\omega_ne_d\}.$$
 As we have seen,
 $\Scal_0 \subset \Z^d$ is partitioned into $D:= \omega_{n-d+1}\cdots
\omega_n/[\Z^d:\Z\Scal]$ equivalence classes $S^1,\ldots,S^D$. For
every equivalence class $S^i$ we define a vector $b_i$ in the
following way. We take $S^i = \{s_1,\ldots,s_t\}$, where $s_j =
(s_{j1},\ldots,s_{jd}) \in \N^d$ for all $j \in \{1,\ldots,t\}$ and
define $b_i = (b_{i1},\ldots,b_{id}) \in \N^d$ as the vector whose
$k$-th coordinate $b_{ik}$ equals the minimum of the $k$-th
coordinates of $s_1,\ldots,s_t$, this is, $b_{ik} := {\rm
min}\{s_{1k},\ldots,s_{tk}\}$. We denote $\frak B := \{b_1,\ldots,b_D\}$ and
\begin{equation} \label{semimacaulification} \Scal' := \frak B + \N\{\omega_{n-d+1}e_1,\ldots,\omega_ne_d\}. \end{equation}

The objective of this section is to prove that $K[\Scal']$ is the
Macaulayfication of $K[\Scal]$. The main issue in the proof is to show that ${\rm
dim}(K[\Scal' \setminus \Scal]) \leq d-2$.  For this purpose we use a technique developed in \cite{M-N}
which consists of determining the dimension of a graded ring by studying its Hilbert function.
More precisely, for $L$ an $\omega$-homogeneous ideal, if we denote by $h(i)$ the Hilbert
function of $R/L$, by \cite[Lemma 1.4]{MoralesCRAS}, there exist some polynomials $Q_{1},..., Q_{s} \in \Z[t]$ with $s \in \Z^+$ such that  
 $h(ls+i)=Q_i(l)$   for all $i \in \{1,\ldots,s\}$ and $l \in \Z^+$ large enough.
Moreover, in \cite{MoralesPreprint}, the author proves the following.

\begin{theorem}\label{dimensionhilbert}Let $L$ be a $\omega$-homogeneous ideal and denote by $h: \N \rightarrow \N$ the Hilbert function of $R/L$.
If we set $h^0(n) = \sum_{i = 0}^n h(i)$, then there exist $s$ polynomials
$f_1,...,f_s \in \Z[t]$  such that $h^0(ls+i)=f_i(l)$ for all $i \in \{1,\ldots,s\}$ and $l \in \Z^+$ large enough.  Moreover, all these polynomials $f_1,\ldots,f_s$ have the same leading term $c\, t^{{\rm dim}(R/L)}/ ({\rm dim}(R/L))!$ with $c \in \Z^+$. 
\end{theorem}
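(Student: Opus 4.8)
The plan is to deduce the quasi-polynomiality and the leading term of the cumulative function $h^0$ directly from the quasi-polynomiality of $h$ itself, which is exactly the content of \cite[Lemma~1.4]{MoralesCRAS}: there exist $Q_1,\ldots,Q_s\in\Q[t]$ with $h(ls+i)=Q_i(l)$ for all $i\in\{1,\ldots,s\}$ and all $l\gg 0$. Writing $D:={\rm dim}(R/L)$, I would first record the standard dimension-theoretic fact that ${\rm dim}_K(R/L)_j=O(j^{D-1})$, so that $\deg Q_i\le D-1$ for every $i$, together with its sharp form $\max_i\deg Q_i=D-1$ (the degree of the quasi-polynomial $h$ being ${\rm dim}(R/L)-1$). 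Since each value $Q_i(l)={\rm dim}_K(R/L)_{ls+i}$ is nonnegative, the leading coefficients of the $Q_i$ of maximal degree are nonnegative and cannot all cancel; hence the polynomial $P:=\sum_{r=1}^s Q_r$, which records the total dimension accumulated over one full period, has degree exactly $D-1$ and strictly positive leading coefficient.

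The main computation is then a bookkeeping of partial sums. Fixing $m_0$ beyond which the equalities $h(ms+r)=Q_r(m)$ hold, for $l\gg 0$ and $i\in\{1,\ldots,s\}$ I would split the range $0\le j\le ls+i$ into an initial block, a run of complete periods, and a final partial period, obtaining
\[
h^0(ls+i)=A+\sum_{m=m_0}^{l-1}P(m)+\sum_{r=1}^{i}Q_r(l),
\]
where $A:=\sum_{j=0}^{m_0 s}h(j)$ is a constant. Because the summation of a polynomial of degree $e$ over $m\in\{m_0,\ldots,l-1\}$ is a polynomial in $l$ of degree $e+1$, the middle term equals $F(l)$ for some $F\in\Q[t]$ of degree $D$ whose leading coefficient is the leading coefficient of $P$ divided by $D$, hence strictly positive. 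Setting $f_i:=A+F+\sum_{r=1}^{i}Q_r$ then gives $h^0(ls+i)=f_i(l)$ for $l\gg 0$, which establishes existence of the $f_i$. Since the residue index $i$ enters only through the partial-period correction $\sum_{r=1}^{i}Q_r$, of degree $\le D-1$, all the $f_i$ share the same degree-$D$ part; this uniformity of the top-degree term across residue classes is the conceptual heart of the statement.

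It remains to identify the common leading term as $c\,t^{D}/D!$ with $c\in\Z^+$. The cleanest route to integrality is a finite-difference argument: each $f_i$ agrees with the integer-valued function $h^0$ at all large integers, so the constant $D$-th difference $\Delta^{D}f_i(l)=\sum_{k=0}^{D}(-1)^{D-k}\binom{D}{k}f_i(l+k)$ is an integer, and for a polynomial of degree $D$ this integer equals $D!$ times the leading coefficient; I call it $c$. By the previous paragraph $c$ is the same for every $i$, and it is strictly positive since $h^0$ is nondecreasing and grows like $l^{D}$ (the leading coefficient of $F$ being positive). Hence the common leading term is $c\,t^{D}/D!$ with $c\in\Z^+$. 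The step I expect to require the most care is pinning down that $\deg f_i$ equals $D$ exactly rather than something smaller, i.e.\ linking the Krull dimension of $R/L$ to the growth order of $h^0$ through the identity $\max_i\deg Q_i=D-1$ with non-cancelling nonnegative leading coefficients; once that is in hand, the degree and integrality bookkeeping is routine.
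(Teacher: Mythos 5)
Your proposal is correct, but there is nothing internal to compare it with: the paper does not prove Theorem \ref{dimensionhilbert} at all --- it quotes it from the unpublished work \cite{MoralesPreprint}, stating only the input you also start from, namely the quasi-polynomiality of $h$ given by \cite[Lemma 1.4]{MoralesCRAS}. Your argument therefore supplies a self-contained derivation where the paper relies on an external reference, and it is sound: the splitting $h^0(ls+i)=A+\sum_{m=m_0}^{l-1}P(m)+\sum_{r=1}^{i}Q_r(l)$ is the right bookkeeping; nonnegativity of the values $Q_r(l)$ forces every nonzero $Q_r$ to have positive leading coefficient, so no cancellation occurs in $P=\sum_{r=1}^{s}Q_r$ and $\deg P = D-1$ where $D:=\dim(R/L)$; summing $P$ raises the degree by one and divides the leading coefficient by $D$; the residue-dependent correction $\sum_{r\leq i}Q_r$ has degree $\leq D-1$, which gives the uniformity of the top term over $i$; and the $D$-th finite difference of the integer-valued $f_i$ identifies $D!\,\mathrm{lc}(f_i)$ as a positive integer $c$. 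Two caveats, neither fatal. First, the only ingredient beyond Lemma 1.4 is the identity $\max_i \deg Q_i = D-1$, which you rightly flag as the delicate point; it deserves an explicit justification or citation (it is the classical Hilbert--Serre fact that for a positively weighted graded algebra the pole order of the Hilbert series at $t=1$ equals the Krull dimension), since the paper's quotation of Lemma 1.4 contains no degree information. Second, your construction produces $f_i\in\Q[t]$ that are merely integer-valued (the power-sum formulas introduce denominators even when the $Q_r$ lie in $\Z[t]$), whereas the statement asserts $f_i\in\Z[t]$; this is a defect of the statement rather than of your proof --- integer-valuedness is all the paper ever uses, and if one insists on integer coefficients it suffices to replace the period $s$ by $D!\,s$ and re-index, since $g(D!\,t+j)\in\Z[t]$ for every integer-valued $g\in\Q[t]$ of degree at most $D$.
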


In the proof of Theorem \ref{macaulayfication_semigroups}, we relate the Hilbert function of $K[\Scal' \setminus \Scal]$ with that of 
several monomial ideals and use of the following technical lemma.

\begin{lemma}\label{keylemmaMacaulayfication}
Let $M \subset K[y_1,\ldots,y_d]$ be a monomial ideal. If for all
$i\in\{1,\ldots,d\}$ there exist $x^{\alpha}\in M$ such that
$x_i\nmid x^{\alpha}$, then ${\rm dim}(K[y_1,\ldots,y_d]/M) \leq d-2$.
\end{lemma}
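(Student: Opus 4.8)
My plan is to read off the Krull dimension from the combinatorial structure of the monomial ideal. Writing $S := K[y_1,\ldots,y_d]$, I would first recall the standard fact that every minimal prime of a monomial ideal $M \subset S$ is a monomial prime $P_F := (y_i \mid i \in F)$ for some subset $F \subseteq \{1,\ldots,d\}$, and that ${\rm ht}(P_F) = |F|$. This gives the formula
\[
{\rm dim}(S/M) \;=\; d - \min\{\, |F| \;\mid\; P_F \text{ is a minimal prime of } M \,\},
\]
so that proving ${\rm dim}(S/M) \le d-2$ amounts to showing that no minimal prime of $M$ has height $0$ or $1$; equivalently, that $M$ is contained in no prime of the form $(0)$ or $(y_i)$.

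The two cases are then immediate. A minimal prime of height $0$ is the zero ideal, which would force $M = (0)$; but the hypothesis, applied to any single index, produces a nonzero monomial of $M$, so $M \ne (0)$ and this case is excluded. For the height-$1$ case I would argue by contradiction: if $(y_i) \supseteq M$ for some $i$, then every element of $M$, in particular every monomial of $M$, is divisible by $y_i$, contradicting the hypothesis that there is some $x^{\alpha} \in M$ with $y_i \nmid x^{\alpha}$. Hence no prime $(y_i)$ contains $M$ either, and every minimal prime of $M$ has height at least $2$. The displayed formula then yields ${\rm dim}(S/M) \le d-2$.

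I do not expect a genuine obstacle here; the only point worth stating carefully is the passage between arbitrary monomials and generators in the height-$1$ step. Since a monomial ideal is contained in $(y_i)$ precisely when each of its monomial generators is, the hypothesis as stated, for \emph{some} monomial of $M$ (not necessarily a minimal generator), is exactly what is required, and no reduction to generators is actually needed. It may in fact be cleanest to phrase the whole argument directly in terms of the assertion ``$M \not\subseteq (y_i)$ for every $i$'', which is literally the content of the hypothesis, thereby bypassing any discussion of generators altogether.
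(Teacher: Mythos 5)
Your proof is correct and follows essentially the same route as the paper: both arguments reduce to showing that $M$ cannot be contained in a monomial prime of height at most one, i.e.\ $M \neq (0)$ and $M \not\subseteq (y_i)$ for every $i$, which is exactly the content of the hypothesis. The only cosmetic difference is that you phrase it via minimal primes and the dimension formula while the paper speaks of a height-one associated prime, and you handle the height-zero case explicitly where the paper leaves it implicit.
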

\begin{proof}
Let us prove that $M$ has height $\geq 2$. By contradiction, assume
that $M$ has an associated prime $\mathfrak P$ of height one. Since
$M$ is monomial, then so is $\mathfrak P$. Therefore, $\mathfrak P =
(x_i)$ for some $i\in\{1\ldots,d\}$. Hence we get that $M \subset
\sqrt{M} \subset \mathfrak P = (x_i)$, a contradiction.
\end{proof}

Now we can proceed with the proof of the main result of this section.

\begin{theorem}\label{macaulayfication_semigroups} Let $K[\Scal]$ be a simplicial semigroup ring and let $\Scal'$ be the semigroup described in (\ref{semimacaulification}). Then,
$K[\Scal']$ is the Macaulayfication of $K[\Scal]$.
\end{theorem}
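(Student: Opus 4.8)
The plan is to verify the three defining conditions of a Macaulayfication for $K[\Scal']$ as constructed in (\ref{semimacaulification}), namely $\Scal \subset \Scal'$, that $K[\Scal']$ is Cohen-Macaulay, and that $\dim(K[\Scal' \setminus \Scal]) \leq d-2$. First I would establish the inclusion $\Scal \subset \Scal'$. Since $\Scal = \Scal_0 + \N\{\omega_{n-d+1}e_1,\ldots,\omega_n e_d\}$ (as noted in the proof of Proposition \ref{CMcharacSemigroup}), it suffices to show $\Scal_0 \subset \Scal'$. Given $s \in \Scal_0$, it lies in some class $S^i$, and by construction $b_i$ is the coordinatewise minimum of the elements of $S^i$; since $s \sim b_i$ and the difference $s - b_i$ is a nonnegative combination of the $\omega_{n-d+i}e_i$ (each coordinate of $s$ is at least the corresponding coordinate of $b_i$, and the difference lies in $\Z\{\omega_{n-d+i}e_i\}$ because $s \sim b_i$), we get $s \in \Scal'$.

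Next I would prove that $K[\Scal']$ is Cohen-Macaulay. By definition $\Scal' = \frak B + \N\{\omega_{n-d+1}e_1,\ldots,\omega_n e_d\}$ with $\vert \frak B \vert = D$. The natural strategy is to invoke Proposition \ref{CMcharacSemigroup}: I would show that the analogue of $\Scal_0$ for the semigroup $\Scal'$ is exactly $\frak B$, so that its cardinality is $D$, matching the index invariant $D = (\prod_i \omega_{n-d+i})/[\Z^d:\Z\Scal']$. Here I must check that $\Z\Scal' = \Z\Scal$, so that $D$ is unchanged; this holds because each $b_i$ is congruent mod $\Z\{\omega_{n-d+i}e_i\}$ to an element of $\Scal$, and conversely. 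I would then verify that no $b_i$ is obtained from another element of $\Scal'$ by subtracting some $a_{n-d+j}$, which follows from the minimality in the coordinatewise definition of each $b_i$ together with the fact that distinct $b_i$ lie in distinct equivalence classes. Thus $\vert (\Scal')_0\vert = D$ and Cohen-Macaulayness follows.

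The hard part will be the third condition, $\dim(K[\Scal' \setminus \Scal]) \leq d-2$, and this is where I would deploy the machinery assembled just before the theorem. The idea is to relate the Hilbert function of $K[\Scal' \setminus \Scal]$ to those of monomial ideals so that Lemma \ref{keylemmaMacaulayfication} applies. Working class by class, for each equivalence class $S^i$ the elements of $\Scal' \setminus \Scal$ lying in $C_i$ correspond, after translating by $b_i$, to lattice points in the positive cone $\N\{\omega_{n-d+i}e_i\}$ that are absent from $\Scal$; these are governed by a monomial ideal $M_i$ in $K[y_1,\ldots,y_d]$. The key geometric observation is that for each coordinate direction $k$, some element of the class $S^i \subset \Scal_0$ already achieves the minimal $k$-th coordinate $b_{ik}$, meaning the corresponding monomial generator avoids $y_k$; this is precisely the hypothesis of Lemma \ref{keylemmaMacaulayfication}, giving height $\geq 2$ and hence dimension $\leq d-2$ for each piece.

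Finally I would assemble these local dimension bounds into the global one. Using Theorem \ref{dimensionhilbert} on the summed Hilbert function $h^0$, the dimension of $K[\Scal' \setminus \Scal]$ is read off from the common degree of the quasi-polynomials $f_1,\ldots,f_s$; since this module decomposes (as a $\Z^d$-graded, hence $\omega$-graded, object) into finitely many pieces each of dimension $\leq d-2$ by the monomial-ideal argument, the total dimension is $\leq d-2$ as well. The main obstacle I anticipate is making the translation from the semigroup description of $\Scal' \setminus \Scal$ to a genuine monomial ideal precise enough that Lemma \ref{keylemmaMacaulayfication} applies verbatim in each class, and then combining the per-class Hilbert functions correctly so that Theorem \ref{dimensionhilbert} yields the uniform bound; once that bookkeeping is in place, both the Cohen-Macaulay condition and the dimension estimate follow cleanly, and uniqueness is then guaranteed by \cite[Theorem 5]{Morales07}.
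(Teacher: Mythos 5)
Your proposal retraces the paper's own proof almost step for step: the same identification $(\Scal')_0 = \frak B$ with $\vert \frak B\vert = D$ and $\Z\Scal' = \Z\Scal$ so that Proposition \ref{CMcharacSemigroup} yields Cohen-Macaulayness, the same class-by-class monomial ideals $M_i \subset K[y_1,\ldots,y_d]$ whose standard monomials encode $(\Scal'\setminus\Scal)\cap C_i$, the same appeal to Lemma \ref{keylemmaMacaulayfication} (you even spell out the point the paper leaves implicit, namely that for each coordinate $k$ some element of $S^i$ attains the minimum $b_{ik}$, so the corresponding generator of $M_i$ avoids $y_k$), and the same assembly of the per-class bounds through Theorem \ref{dimensionhilbert}.

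There is, however, one genuine omission: you never prove that the set $\Scal' = \frak B + \N\{\omega_{n-d+1}e_1,\ldots,\omega_n e_d\}$ of (\ref{semimacaulification}) is closed under addition, i.e., that it is a semigroup at all. This is not a formality, because the vectors $b_i$ are coordinatewise minima taken over entire equivalence classes and in general do \emph{not} belong to $\Scal$ (in the example at the end of Section 4, the class $\{(3,27),(13,17)\}$ yields $b=(3,17)\notin\Scal$); so closure is a real assertion about the construction, and every later step of your argument presupposes it: $K[\Scal']$ is a ring, Proposition \ref{CMcharacSemigroup} applies to it, and the definition of Macaulayfication speaks of a semigroup ring, only once $\Scal'$ is known to be a finitely generated simplicial semigroup. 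The paper proves closure as its first step, and the argument genuinely uses the minimality built into $\frak B$: given $b_i + c_1$ and $b_j+c_2$ in $\Scal'$, let $b_k$ be the representative of the class of $b_i+b_j$; for each coordinate $t$ pick $s\in S^i$ and $s'\in S^j$ attaining $b_{it}$ and $b_{jt}$, so that $s+s'\in \Scal\cap C_k\subseteq S^k+\N\{\omega_{n-d+1}e_1,\ldots,\omega_n e_d\}$ and hence $(b_i+b_j)_t=(s+s')_t\ge b_{kt}$; since also $b_i+b_j\sim b_k$, this forces $b_i+b_j-b_k\in\N\{\omega_{n-d+1}e_1,\ldots,\omega_n e_d\}$, whence $(b_i+c_1)+(b_j+c_2)\in b_k+\N\{\omega_{n-d+1}e_1,\ldots,\omega_n e_d\}\subseteq\Scal'$. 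With this step supplied, your proof is complete and coincides with the paper's; uniqueness is, as you say, \cite[Theorem 5]{Morales07}.
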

\begin{proof}
Is is clear that $\Scal \subset \Scal'$. In order to obtain the result it suffices to prove that $\Scal'$ is a semigroup, that $K[\Scal'] $ is Cohen-Macaulay and that ${\rm dim}(K[\Scal' \setminus \Scal]) \leq {\rm dim}(K[\Scal]) -
2$ (see, e.g., \cite{Morales07}).
 
Let us first prove that $\Scal'$ is a semigroup. Take $s_1, s_2 \in
\Scal'$, then there exists $i, j \in \{1,\ldots,D\}$ such that $s_1=
b_i + c_1$ and $s_2 = b_j+ c_2$ for some $c_1,c_2 \in \N
\{\omega_{n-d+1} e_1,\ldots,\omega_n e_d\}$. Then $s_1 + s_2 = b_i +
b_j + c_1 + c_2$. We take $k \in \{1,\ldots,D\}$ such that $b_k \sim
b_i + b_j$. By construction of $\frak B$ we have that $b_{k} = b_i + b_j + c_3$ for
some $c_3 \in \{\omega_{n-d+1} e_1,\ldots,\omega_n e_d\}$ and, hence, $s_1 +
s_2 \in \Scal'$.

To prove that $\Scal'$ is Cohen-Macaulay it suffices to observe that
$\frak B = \{b \in \Scal' \, \vert \, b - a_i\notin \Scal'$ for all
$i \in \{1,\ldots,d\}\}$ and that $\vert \frak B \vert = D$, so by
Proposition \ref{CMcharacSemigroup} it follows that $\Scal'$ is
Cohen-Macaulay.

Let us prove that $\dim(K[\Scal'\setminus\Scal]) \leq d-2$.  For all $s = (s_1,\ldots,s_m) \in \N^m$
we consider the grading ${\rm deg}(t^s) = \sum_{i = 1}^m s_i$ and we denote $h, \, h'$
and  $\widehat{h}$ the Hilbert functions of $K[\Scal],\, K[\Scal']$
and  $K[\Scal' \setminus \Scal]$ respectively, then $\widehat{h} =
h' - h$. Moreover, we have that $h' = \sum_{i=1}^{D}h_i'$ and $h =
\sum_{i=1}^{D}h_i$ where $h_i'(d) :=\left\vert \{s\in\Scal'\ \vert\
\deg t^s = d\ {\rm and}\ s\sim b_i\}\right\vert$ and $h_i(d)
:=\left\vert \{s\in\Scal\ \vert\ \deg t^s = d\ {\rm and}\ s\sim
b_i\}\right\vert$. For each $i\in\{1,\ldots,D\}$ we define a
monomial ideal $M_i \subset k[y_1,\ldots,y_d]$ as follows: for every
$b \in \Scal$ such that $b \sim b_i$ we define the monomial $m_b :=
y_1^{\beta_1} \cdots y_d^{\beta_d}$ if $b = b_i + \sum_{i = 1}^d
\beta_i \omega_{n-d+i} e_i$ and $M_i := (\{m_b \, \vert \, b \in
\Scal, b \sim b_i\})$. We consider in $K[y_1,\ldots,y_d]$ the
grading ${\rm deg}_{\omega}(y_i) = \omega_{n-d+i}$ and denote by
$h^{\omega}_i$ the corres\-pon\-ding $\omega$-homogeneous Hilbert
function of $K[y_1,\ldots,y_d]/M_i$. We have the following equality
$h^{\omega}_i(\lambda) = h_i'(\sum_{j = 1}^d b_{ij} + \lambda) -
h_i(\sum_{j = 1}^d b_{i_j} + \lambda)$ because $y^{\beta}\notin M_i
\Longleftrightarrow b_i + \sum_{i=1}^d \beta_i \omega_{n-d+i} e_i
\in \Scal' \setminus \Scal$. Hence, we have expressed the Hilbert
function $\widehat{h}$ of $K[\Scal \setminus \Scal']$ as a sum of
$D$ Hilbert functions of $K[y_1,\ldots,y_d] / M_i$, for some
monomial ideals $M_1,\ldots,M_D$ and, by Lemma
\ref{keylemmaMacaulayfication}, ${\rm dim}(K[y_1,\ldots,y_d] / M_i)
\leq d-2$. Thus, by Theorem \ref{dimensionhilbert}, we can conclude that the dimension of 
$K[\Scal' \setminus \Scal]$ equals the maximum of ${\rm
dim}(K[y_1,\ldots,y_d] /M_i) \leq d-2$ and we get the
result.\end{proof}

We finish this section with an example showing how to compute the
Macaulayfication by means of the set $\Scal_0$. Moreover, this
example illustrates that even if $K[\Scal] = R/I_{\Acal}$ with $I_{\Acal}$ a
homogeneous ideal, it might happen that the ideal associated to
$K[\Scal']$ is not standard homogeneous.

\begin{example}We consider the semigroup ring $K[\Scal]$, where $\Scal \subset \N^2$ is the semigroup
 generated by $\Acal := \{ (1,9),(4,6),(5,5),(10,0),(0,10)
\} \subset\N^2$. Then, $K[\Scal] = R/I_{\Acal}$ and $I_{\Acal}$ is
homogeneous. If we compute the set $\Scal_0$ we get that $$\Scal_0 =
\left\{(0,0),(1,9),(2,18),(3,27),(13,17),(4,6),(5,5),(6,14),(7,23),(8,12),(9,11)\right\}.$$
Moreover we compute $D = 100 / [\Z^d : \Z \Scal] = 10$ and get
$S^{1}=\left\{(0,0)\right\}$, $S^{2}=\left\{(1,9)\right\}$,
$S^{3}=\left\{(2,18)\right\}$,
$S^{4}=\left\{(3,27),(13,17)\right\}$, $S^{5}=\left\{(4,6)
\right\}$, $S^{6}=\left\{(5,5)\right\}$,
$S^{7}=\left\{(6,14)\right\}$,
$S^{8}=\left\{(7,23)\right\}$, $S^{9}=\left\{(8,12)\right\}$
and  $S^{10}=\left\{(9,11)\right\}$. So, the Macaulayfication
$K[\Scal']$ of $K[\Scal]$ is given by $\Scal' = \frak B + \N
\{(10,0),(0,10)\}$, where $$\frak B = \{
(0,0),(1,9),(2,18),(3,17),(4,6),(5,5),(6,14),(7,23),(8,12),(9,11)\}.$$
Or equivalently, $\Scal'$ is the semigroup generated by
$$\Acal' =
\{(1,9), (3,17), (4,6), (5,5), (10,0), (0,10)\}.$$ We observe that
$K[\Scal'] \simeq K[x_1,\ldots,x_6] / I_{\Acal'}$ and that $I_{\Acal'}$
is $\omega$-homogeneous with respect to $\omega = (1,2,1,1,1,1)$ but not
standard homogeneous.
\end{example}

\section{An upper bound for the Castelnuovo-Mumford regularity of projective monomial curves}

Every sequence $m_1 < \ldots < m_n$ of relatively prime positive integers with $n \geq 2$ has
associated the projective monomial curve $\Ccal \subset {\mathbb
P}_K^n$ given parametrically by $x_i := s^{m_i}  t^{m_n-m_i}$ for
all $i \in \{1,\ldots,n-1\},\, x_{n} = s^{m_n},  \, x_{n+1} :=
t^{m_n}$. If we set $\Acal := \{a_1,\ldots,a_{n+1}\} \subset \N^2$
where $a_i := (m_i, m_n - m_i), a_n := (m_n, 0)$ and $a_{n+1} :=
(0,m_n)$, it turns out that $I_{\Acal} \subset K[x_1,\ldots,x_{n+1}]$ is the defining ideal of
$\Ccal$. If we denote by $\Scal$ the semigroup generated by
$\Acal$, then the $2$-dimensional
semigroup ring  $K[\Scal]$ is isomorphic to $K[x_1,\ldots,x_{n+1}]/I_{\Acal}$, the homogeneous coordinate ring of $\Ccal$.
Hence, the methods of the previous sections apply here
to describe its multigraded Noether resolution, and the formula (\ref{formularegularidad}) in Remark \ref{multtostandardgraded} for the Castelnuovo-Mumford regularity  
holds in this context (with $\omega_1 = m_n$). The goal of this section is to use this formula to prove Theorem \ref{upperboundregmoncurve}, which provides an
upper bound for the Castelnuovo-Mumford regularity of $K[\Scal]$.
The proof we are presenting is elementary and uses some classical results on numerical semigroups. We will introduce now the results on numerical semigroups that we need for
our proof (for more on this topic we refer to \cite{Rosaleslibro} and \cite{RA}). 

Given $m_1,\ldots,m_n$ a set
of relatively prime integers, we denote by $\mathcal R$ the numerical subsemigroup of $\N$ spanned by $m_1,\ldots,m_n$. The largest integer that does not belong to $\mathcal R$
is called the {\it Frobenius number} of $\mathcal R$ and will be denoted by ${\rm g}(\mathcal R)$. We consider the {\it Apery set of $\mathcal R$ with respect
to $m_n$}, i.e., the set $${\rm Ap}(\mathcal R,m_n) := \{a \in \mathcal R \, \vert \, a - m_n \notin \mathcal R\}.$$ It is a well known and easy to check that
 ${\rm Ap}(\mathcal R,m_n)$ constitutes a full set of residues modulo $m_n$ (and, in particular, has $m_n$ elements) and that ${\rm max}({\rm Ap}(\mathcal R,m_n)) = {\rm g}(\mathcal R) + m_n.$
 We will also use an upper bound on ${\rm g}(\mathcal R)$ which is a slight variant of the one given in \cite{Selmer} (which was deduced from 
 a result of \cite{EG}). The reason why we do not use Selmer's bound itself is that it is only valid under the additional hypothesis that $n \leq m_1$. This is not a restrictive hypothesis when studying numerical semigroups, because whenever $m_1 < \cdots < m_n$ is a minimal set of generators of $\mathcal R$, then $n \leq m_1$. In our current setting of projective monomial
 curves, the case where $m_1 < \cdots < m_n$ is not a minimal set of generators of $\mathcal R$ is interesting by itself (even the case $m_1 = 1$ is interesting); hence, a direct
 adaptation of the proof of Selmer yields that \begin{equation}\label{upperboundfro}g(\mathcal R) \leq 2 m_n \left\lfloor \frac{m_{\tau}}{n} \right\rfloor - m_{\tau},\end{equation} for every $m_{\tau} \geq n$. Note that $m_n \geq n$ and then, such a value $\tau$ always exists.. 
 
 We first include a result providing an upper bound for  ${\rm reg}(K[\Scal])$ when $K[\Scal]$ is Cohen-Macaulay.
 
 \begin{proposition}\label{cotareguCM} Let $m_1 < \ldots < m_n$ be a sequence of relatively prime positive integers with $n \geq 2$ and let $\tau \in \{1,\ldots,n\}$ such that
 $m_{\tau} \geq n$. If $K[\Scal]$ is Cohen-Macaulay, then $${\rm reg}(K[\Scal]) \leq \left\lfloor  (2m_n \left\lfloor \frac{m_{\tau}}{n} \right\rfloor - m_{\tau} + m_n)/m_1\right\rfloor .$$ In particular, if 
 $m_1 \geq n$, we have that ${\rm reg}(K[\Scal]) \leq \left\lfloor m_n\left(\frac{2}{n} + \frac{1}{m_1}\right) - 1 \right\rfloor.$
 \end{proposition}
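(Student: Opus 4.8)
We have a projective monomial curve associated to $m_1 < \cdots < m_n$ (relatively prime), with semigroup $\Scal$ generated by $\Acal = \{a_1,\ldots,a_{n+1}\}$ where $a_i = (m_i, m_n - m_i)$, $a_n = (m_n, 0)$, $a_{n+1} = (0, m_n)$. We want to bound $\reg(K[\Scal])$ when $K[\Scal]$ is Cohen-Macaulay.

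When Cohen-Macaulay, by Remark 5 (multtostandardgraded), since $\Scal_1 = \emptyset$ (by Theorem S_1 / CMcharacSemigroup), the regularity formula reduces to $\reg(K[\Scal]) = \max\{(b_1+b_2)/\omega_1 : (b_1,b_2) \in \Scal_0\}$ with $\omega_1 = m_n$. So I need to bound the maximal weighted degree $(b_1+b_2)/m_n$ over $b = (b_1,b_2) \in \Scal_0$.

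**My planned approach.** Key is to understand $\Scal_0$ from Proposition S0: $\Scal_0 = \{s \in \Scal : s - a_n \notin \Scal \text{ and } s - a_{n+1} \notin \Scal\}$ (the two generators with $a_n = m_n e_1$, $a_{n+1} = m_n e_2$ being the Noether normalization). So $s = (b_1, b_2) \in \Scal_0$ means I cannot subtract $(m_n, 0)$ nor $(0, m_n)$ and stay in $\Scal$.

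Let me reflect on how to bound $b_1 + b_2$. I'd relate $\Scal_0$ to the Apery set of the numerical semigroup $\mathcal R$ generated by $m_1, \ldots, m_n$ (note: the first coordinates of the affine semigroup points, via the curve parametrization, connect to $\mathcal R$). The Apery set $\mathrm{Ap}(\mathcal R, m_n)$ has max equal to $g(\mathcal R) + m_n$. The bound (upperboundfro) gives $g(\mathcal R) \le 2m_n\lfloor m_\tau/n\rfloor - m_\tau$. The task is to bound $b_1 + b_2$ for $b \in \Scal_0$ by something like $g(\mathcal R) + m_n$, so that dividing by $m_1$ gives the stated bound. The content $m_n$ of the homogeneous grading means each lattice point lies on the line $b_1 + b_2 \equiv 0 \pmod{m_n}$ scaled; more precisely the degree $(b_1+b_2)/m_n$ counts "how many copies of $m_n$."

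**Plan of the proof.** First I'd write $\reg(K[\Scal]) = \max_{(b_1,b_2)\in\Scal_0} (b_1+b_2)/m_n$. Then I would show that for any $(b_1,b_2) \in \Scal_0$, the first coordinate (or an appropriate linear combination) sits inside $\mathrm{Ap}(\mathcal R, m_n)$, or is bounded by $g(\mathcal R) + m_n$. The natural route: an element $s = \sum_{i=1}^{n-1} c_i a_i + c_n a_n + c_{n+1} a_{n+1} \in \Scal$ lies in $\Scal_0$ exactly when its expression cannot afford subtracting a full $m_n e_1$ or $m_n e_2$; I would argue that the total degree $(b_1+b_2)/m_n = \sum c_i$ (the number of generators used among the non-normalizing ones, since each contributes total weight $m_n$) is controlled. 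I would then exhibit that the first coordinate $b_1$ of a minimal (in degree-maximal) element of $\Scal_0$ corresponds to a maximal Apery element of $\mathcal R$, giving $b_1 \le g(\mathcal R) + m_n$, and the degree $(b_1+b_2)/m_n \le \lfloor (b_1)/m_1 \rfloor$-type estimate. Combining with (upperboundfro) and $\max \mathrm{Ap} = g(\mathcal R) + m_n$, then dividing by $m_1$ and applying the floor, yields the claimed bound. The ``in particular'' case follows by taking $\tau = 1$ (valid since $m_1 \ge n$), so $\lfloor m_1/n \rfloor \le m_1/n$, giving $(2m_n \cdot m_1/n - m_1 + m_n)/m_1 = m_n(2/n + 1/m_1) - 1$, and the floor is monotone.

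**Main obstacle.** The crux will be the precise dictionary between $\Scal_0$ and $\mathrm{Ap}(\mathcal R, m_n)$: showing that the degree $(b_1+b_2)/m_n$ of an element of $\Scal_0$ is bounded by $(g(\mathcal R) + m_n)/m_1$ rounded down. This requires care because $\Scal_0$ lives in $\N^2$ while the Apery set lives in $\N$, and the Cohen-Macaulay hypothesis (ensuring $\Scal_1 = \emptyset$ and that $\Scal_0$ is a ``complete residue system'' of size $D$) is what lets me pass cleanly between the affine and numerical pictures. I expect the verification that the degree-maximal element of $\Scal_0$ projects to $\max \mathrm{Ap}(\mathcal R, m_n)$, together with the floor bookkeeping $(b_1+b_2)/m_n \le b_1/m_1$, to be the delicate combinatorial heart of the argument.
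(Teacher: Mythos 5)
Your proposal follows essentially the same route as the paper's proof: reduce to $\reg(K[\Scal]) = \max\{(b_1+b_2)/m_n : (b_1,b_2)\in\Scal_0\}$ via the Cohen-Macaulay hypothesis, observe that any $(b_1,b_2)\in\Scal_0$ is a sum $\sum_{i=1}^{n-1}\alpha_i a_i$ so that $(b_1+b_2)/m_n = \sum\alpha_i \le b_1/m_1$, show $b_1\in{\rm Ap}(\mathcal R,m_n)$ (the paper does this exactly by the mechanism you anticipate: if $b_1-m_n\in\mathcal R$ there would be a second element of $\Scal_0$ in the same $\sim$-class, contradicting uniqueness of class representatives in the CM case), and conclude with the Frobenius bound and $\tau=1$ for the last claim. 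The "delicate combinatorial heart" you flag is thus a one-line argument, and your outline is correct.
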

 \begin{proof}
We consider the equivalence relation $\sim$ of Section $4$. Indeed, since now $\Z \Scal = \{(x,y) \, \vert \, x + y \equiv 0\ ({\rm mod}\ m_n)\}$, then we have that $\sim$ partitions
the set $\Scal_0$ in exactly $m_n$ equivalence classes. Moreover, since $K[\Scal]$ is Cohen-Macaulay,  we have that 
\begin{itemize}
\item each of these classes has a unique element,
\item $\Scal_1 = \emptyset$, and
\item  ${\rm reg}(K[\Scal]) = {\rm max}\left\{\frac{b_1+b_2}{m_n} \, \vert \, (b_1,b_2) \in \Scal_0\right\}$ (see Remark 
\ref{multtostandardgraded}).
\end{itemize}
Let us take $(b_1,b_2) \in \Scal_0$, then  $(b_1,b_2) = \sum_{i = 1}^{n-1} \alpha_i a_i$ and $(b_1 + b_2)/m_n = \sum_{i = 1}^{n-1} \alpha_i$. Moreover,  we claim that 
$b_1 \in {\rm Ap}(\mathcal R,m_n)$. Otherwise, $b_1 - m_n \in \mathcal R$ and there would be another element $(c_1,c_2) \in \Scal_0$ such that $(c_1,c_2) \sim (b_1,b_2)$, a contradiction.
Hence,  by (\ref{upperboundfro}),
$$\left(\sum_{i = 1}^{n-1} \alpha_i\right) m_1 \leq \sum_{i = 1}^{n-1} \alpha_i m_i = b_1 \leq {\rm g}(\mathcal R) + m_n \leq 2 m_n \left\lfloor \frac{m_{\tau}}{n} \right\rfloor - m_{\tau} + m_n.$$  And,
from this expression we conclude that $$\frac{b_1 + b_2}{m_n} =  \sum_{i = 1}^{n-1} \alpha_i \leq (2m_n \left\lfloor \frac{m_{\tau}}{n} \right\rfloor - m_{\tau} + m_n)/m_1.$$
When $m_1 \geq n$, then it suffices to take $\tau = 1$ to get the result. \end{proof}

 And now, we can prove the main result of the section. 
 
  \begin{theorem}\label{upperboundregmoncurve} Let $m_1 < \ldots < m_n$ be a sequence of relatively prime positive integers with $n \geq 2$. 
If we take $\tau, \lambda$ such that $m_{\tau} \geq n$ and $m_n - m_{\lambda} \geq n$. 
  Then, $${\rm reg}(K[\Scal]) \leq \left\lfloor  \frac{\left(2m_n \left\lfloor \frac{m_{\tau}}{n} \right\rfloor - m_{\tau} + m_n\right)}{m_1} + \frac{\left(2m_n \left\lfloor \frac{m_n - m_{\lambda}}{n} \right\rfloor + m_{\lambda}\right)}{(m_n - m_{n-1})} \right\rfloor  - 2.$$ In particular, if $m_1 \geq n$ and $m_n - m_{n-1} \geq n$, then ${\rm reg}(K[\Scal]) \leq  \left\lfloor m_n \left(\frac{4}{n} +  \frac{1}{m_1} + \frac{1}{m_n - m_{n-1}}\right) \right\rfloor -4$.
 \end{theorem}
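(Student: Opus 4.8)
The plan is to bound, via the regularity formula (\ref{formularegularidad}) of Remark \ref{multtostandardgraded} (applicable since $\omega_1=\cdots=\omega_{n+1}=m_n$), the two quantities
$$\max\{(b_1+b_2)/m_n \mid (b_1,b_2)\in\Scal_0\}, \qquad \max\{(b_1+b_2)/m_n - 1 \mid (b_1,b_2)\in\Scal_1\},$$
where $\Scal_0$ is described in Proposition \ref{S0} and $\Scal_1=\Delta$ by Theorem \ref{S_1}. Set $\mathcal R:=\langle m_1,\ldots,m_n\rangle$ and $\mathcal R':=\langle m_n-m_1,\ldots,m_n-m_{n-1},m_n\rangle$; these are numerical semigroups with smallest generators $m_1$ and $m_n-m_{n-1}$ and largest generator $m_n$. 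Writing $\ell(c)$ (resp.\ $\ell'(c)$) for the least number of generators of $\mathcal R$ (resp.\ $\mathcal R'$) summing to $c$, the first step I would establish is the membership criterion: for $b_1,b_2\ge0$ with $b_1+b_2=k\,m_n$,
$$(b_1,b_2)\in\Scal \iff \ell(b_1)\le k \iff \ell'(b_2)\le k.$$
This comes from writing $(b_1,b_2)=\sum\alpha_i a_i$ and noting that the first coordinate uses exactly the generators of $\mathcal R$ while $a_{n+1}$ only pads the total weight $k=\sum\alpha_i$; the two right-hand conditions are equivalent through the symmetry $s\leftrightarrow t$ of the parametrization, which replaces $m_i$ by $m_n-m_i$ and swaps $\mathcal R$ with $\mathcal R'$.

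Feeding this criterion into the definitions gives clean length conditions: a weight-$k$ element lies in $\Scal_0$ iff $\ell(b_1)=\ell'(b_2)=k$, and lies in $\Delta$ iff $\ell(b_1-m_n)=\ell'(b_2-m_n)=k-1$ (with $b_1,b_2\ge m_n$). For the estimates I would use the Apéry decomposition, writing $b_1=r_1+p\,m_n$ (resp.\ $b_1-m_n=r_1+p\,m_n$ in the $\Delta$ case) with $r_1$ the element of ${\rm Ap}(\mathcal R,m_n)$ congruent to $b_1$ modulo $m_n$, so that $r_1\le {\rm g}(\mathcal R)+m_n$ and $\ell(r_1)\le r_1/m_1$, and symmetrically $b_2=r_2+q\,m_n$ with $r_2\in{\rm Ap}(\mathcal R',m_n)$, $\ell'(r_2)\le r_2/(m_n-m_{n-1})$.

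For $(b_1,b_2)\in\Scal_0$: if $p=0$ then $b_1=r_1$ and $k=\ell(r_1)\le r_1/m_1\le({\rm g}(\mathcal R)+m_n)/m_1$, and symmetrically if $q=0$; otherwise $p,q\ge1$, and the condition $s-a_n\notin\Scal$ forces $\ell(b_1-m_n)\ge k$ (as $b_1-m_n\in\mathcal R$), whence $k\le\ell(r_1)+p-1$ and $k\le\ell'(r_2)+q-1$. Adding these and using $p+q\le k$ (from $r_1+r_2=(k-p-q)m_n\ge0$) gives $k\le\ell(r_1)+\ell'(r_2)-2$. For $(b_1,b_2)\in\Delta$ the analogous inequalities read $k-1\le\ell(r_1)+p$ and $k-1\le\ell'(r_2)+q$, with $r_1+r_2=(k-2-p-q)m_n$. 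The crucial extra input here is that $r_1,r_2>0$: if $r_1=0$ then $b_1$ is a multiple of $m_n$, forcing $\ell(b_1-m_n)=b_1/m_n-1=k-1$ and hence $b_2=0$, contradicting $s-a_{n+1}\in\Scal$. Positivity then upgrades the weight relation to $p+q\le k-3$, and adding the two inequalities yields $2(k-1)\le\ell(r_1)+\ell'(r_2)+(k-3)$, i.e.\ $k-1\le\ell(r_1)+\ell'(r_2)-2$. In every case the relevant quantity is at most $({\rm g}(\mathcal R)+m_n)/m_1+({\rm g}(\mathcal R')+m_n)/(m_n-m_{n-1})-2$, where in the degenerate $\Scal_0$ cases $p=0$ or $q=0$ one absorbs the missing summand using that each term is at least $2$ (a consequence of $m_\tau\ge n$ and $m_n-m_\lambda\ge n$).

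It then follows that ${\rm reg}(K[\Scal])\le({\rm g}(\mathcal R)+m_n)/m_1+({\rm g}(\mathcal R')+m_n)/(m_n-m_{n-1})-2$. Substituting the Frobenius estimate (\ref{upperboundfro}) for $\mathcal R$ with $m_\tau$, and its analogue for $\mathcal R'$ (a set of $n$ generators with largest $m_n$, using $m_n-m_\lambda\ge n$), namely ${\rm g}(\mathcal R')+m_n\le 2m_n\lfloor(m_n-m_\lambda)/n\rfloor+m_\lambda$, and using that ${\rm reg}$ is an integer to pass to the floor, gives the stated bound; the ``in particular'' assertion follows by taking $\tau=1$, $\lambda=n-1$ and simplifying. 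I expect the main obstacle to be the $\Delta$ case: obtaining the sharp constant $-2$ rather than $-1$ hinges precisely on the positivity $r_1,r_2>0$ and the resulting sharpening $p+q\le k-3$, so the delicate points are proving the membership criterion and verifying that it, together with the coordinate symmetry, correctly translates the defining conditions of $\Scal_0$ and $\Delta$ into the length equalities above.
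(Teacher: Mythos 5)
Your proof is correct, and it reaches the paper's estimates by a genuinely different intermediate route. The paper argues congruence class by congruence class modulo $\Z\{(m_n,0),(0,m_n)\}$: within a class $E$ it orders $\Scal_0\cap E$ as a staircase $(x_1,y_1),\ldots,(x_r,y_r)$ with $x_1<\cdots<x_r$, proves $x_1\in{\rm Ap}(\mathcal R,m_n)$ and $y_r\in{\rm Ap}(\mathcal R',m_n)$, shows that $\Scal_1\cap E$ is exactly the set of inner corners $(x_{i+1},y_i)$ --- this identification, the paper's property (e), is the longest part of its proof --- and bounds every corner by the two extreme elements via $(x_{i+1}+y_i)/m_n-1\le (x_1+y_1)/m_n+(x_r+y_r)/m_n-2$. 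You replace that structural analysis by the arithmetic membership criterion $(b_1,b_2)\in\Scal \iff \ell(b_1)\le k \iff \ell'(b_2)\le k$, the induced length characterizations of $\Scal_0$ and $\Delta$, and elementwise estimates through the Ap\'ery decompositions $b_1=r_1+pm_n$, $b_2=r_2+qm_n$; in fact your $\ell(r_1)$ and $\ell'(r_2)$ coincide with the paper's $(x_1+y_1)/m_n$ and $(x_r+y_r)/m_n$, and your inequality $p+q\le k-3$ (from $r_1,r_2>0$) secures the same constant $-2$ that the paper obtains from $x_1+y_r\ge m_n$. From there the two proofs finish identically: bound $\ell(r_1)\le ({\rm g}(\mathcal R)+m_n)/m_1$ and $\ell'(r_2)\le ({\rm g}(\mathcal R')+m_n)/(m_n-m_{n-1})$, apply (\ref{upperboundfro}) to both $\mathcal R$ and $\mathcal R'$, and pass to floors. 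Your route is leaner, since it never needs to describe $\Scal_1$ exactly, whereas the paper's route yields the full staircase picture of $\Scal_0$ and $\Scal_1$ in each class, which has independent interest. Two small points to make explicit in a final write-up: (i) in the $\Scal_0$ characterization, when $b_2<m_n$ the condition $s-a_{n+1}\notin\Scal$ holds vacuously, but one still gets $\ell(b_1)=k$ because every generator of $\mathcal R$ is at most $m_n$, so $\ell(b_1)\ge \lceil b_1/m_n\rceil = k$; (ii) the absorption in the degenerate cases $p=0$ or $q=0$ requires both fractions in the final bound to be at least $2$, which indeed follows from $\lfloor m_\tau/n\rfloor\ge 1$, $\lfloor (m_n-m_\lambda)/n\rfloor\ge 1$ and $m_1,\,m_n-m_{n-1}\le m_n$, as you indicated.
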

 \begin{proof}
We consider $E$ one of the equivalence classes of $\Z \Scal$ induced by the equivalence relation $\sim$. First, assume that $\Scal_0 \cap E$ has a unique element which we call $(b_1,b_2)$. Then, $\Scal_1 \cap E = \emptyset$, and the same argument as in the proof of Proposition \ref{cotareguCM} proves that  $\frac{b_1+b_2}{m_n} \leq (2m_n \left\lfloor \frac{m_{\tau}}{n} \right\rfloor - m_{\tau} + m_n)/m_1.$  
 
Assume now that $\Scal_0 \cap E = \{(x_1,y_1), \ldots, (x_r,y_r)\}$ with $r \geq 2$ and $x_1 < x_2 < \cdots < x_r$. We claim that the following properties hold: \begin{itemize} 
\item[(a)] $x_1 \equiv x_2 \equiv \cdots \equiv x_r \ ({\rm mod}\ m_n),$
\item[(b)] $y_1 > \cdots > y_r$ and $y_1 \equiv y_2 \equiv \cdots \equiv y_r \ ({\rm mod}\ m_n),$
\item[(c)] $x_1 \in {\rm Ap}(\mathcal R,m_n)$, 
\item[(d)] $y_r \in {\rm Ap}(\mathcal R',m_n)$, where $\mathcal R'$ is the numerical semigroup generated by $m_n - m_{n-1} < m_n - m_{n-2} < \cdots < m_n - m_1 < m_n$, 
\item[(e)] $\Scal_1 \cap E  = \{(x_2,y_1), (x_3,y_2), \ldots, (x_r,y_{r-1})\},$ and
\item[(f)] ${\rm max}\left\{\frac{b_1 + b_2}{m_n} \, \vert \, (b_1,b_2) \in \Scal_0 \cap E\right\} \leq {\rm max}\left\{\frac{b_1 + b_2}{m_n} \, \vert \, (b_1,b_2) \in \Scal_1 \cap E\right\} - 1.$
\end{itemize}
Properties (a) and (b) are evident. To prove (c) and (d) it suffices to take into account the following facts: $\Scal \subset \mathcal R \times \mathcal R'$, and for every $b_1 \in \mathcal R$, $b_2 \in \mathcal R'$ there exist $c_1,c_2 \in \N$ such that $(b_1,c_2), (c_1,b_2) \in \Scal$. To prove (e) we first observe that 
\[ \Scal \cap E = \{b + \lambda (m_n,0) + \mu (0,m_n) \, \vert b \in \Scal_0 \cap E,\, \lambda, \mu \in \N\}.\]
Take now $(x,y) \in \Scal_1 \cap E$ and we take the minimum value $i \in \{1,\ldots,r\}$ such that $(x,y) = (x_i,y_i) + \lambda (m_n,0) + \mu (0,m_n)$ with $\lambda, \mu \in \N$; we observe that
\begin{itemize}
\item $\lambda > 0$; otherwise $(x,y) - (m_n,0) \notin \Scal$,
\item $\mu = 0$; otherwise $(x,y) - (m_n,m_n) = (x_i,y_i) + (\lambda-1)(m_n,0) + (\mu-1) (0,m_n) \in \Scal$, a contradiction,
\item $y \geq y_{r-1}$; otherwise $i = r$ and, since $(x,y) - (0,m_n) \in \Scal \cap E$, we get that $\mu \geq 1$, 
\item $x \leq x_{i+1}$; otherwise $(x,y) = (x_{i+1},y_{i+1}) + \lambda' (m_n,0) + \mu' (0,m_n)$ with $\lambda',\mu' \geq 1$, a contradiction, and
\item $x \geq x_{i+1}$; otherwise $(x,y) - (0,m_n) \notin \Scal$. 
\end{itemize}
Hence, $(x,y) = (x_{i+1},y_i)$ and $\Scal_1 \cap E \subseteq \{(x_2,y_1), (x_3,y_2), \ldots, (x_r,y_{r-1})\}$. Take now $i \in \{1,\ldots,r-1\}$, and consider $(x_{i+1},y_i) \in \Scal$. Since $(x_i,y_i), (x_{i+1},y_{i+1}) \in E$, $x_i \equiv x_{i+1}\ ({\rm mod}\ m_n)$ and $y_i \equiv y_{i+1}\ ({\rm mod}\ m_n)$, then  $(x_{i+1},y_i) \in E$. We also have that  there exist $\gamma, \delta \in \N$ such that $(x_{i+1},y_i) - (m_n,0) = (x_i,y_i) + \gamma (m_n,0) \in \Scal$ and $(x_{i+1},y_i) - (0,m_n) = (x_{i+1},y_{i+1}) + \delta (0,m_n) \in \Scal$. We claim that $(x_{i+1},y_i) - (m_n,m_n) \notin \Scal$. Otherwise there exists $j \in \{1,\ldots,r\}$ such that  $(x_{i+1}- m_n,y_i - m_n) = (x_j,y_j) + \lambda'(m_n,0) + \mu'(0,m_n)$; this is not possible since $x_{i+1} - m_n < x_{i+1}$ implies that $j \leq i$, and $y_i - m_n < y_i$ implies that $j \geq i+1$. Thus, $(x_{i+1},y_i) \in \Scal_1$ and (e) is proved.
Property (f) follows from (e).

Moreover, since $x_1 \in {\rm Ap}(\mathcal R,m_n)$, the same argument as in Proposition \ref{cotareguCM} proves that
\begin{equation}\label{ine1} \frac{x_1 + y_1}{m_n} \leq \left(2m_n \left\lfloor \frac{m_{\tau}}{n} \right\rfloor - m_{\tau} + m_n\right)/m_1,\end{equation}
and a similar argument with $y_r \in {\rm Ap}(\mathcal R',m_n)$ proves that 
\begin{equation}\label{ine2} \frac{x_r + y_r}{m_n} \leq \left(2m_n \left\lfloor \frac{m_n - m_{\lambda}}{n} \right\rfloor + m_{\lambda}\right)/(m_n - m_{n-1}).\end{equation}
And, since, 
\begin{equation}\label{ine3} \frac{x_{i+1} + y_i}{m_n} - 1  \leq \frac{x_r + y_1}{m_n} - 1 \leq 
\frac{x_1 + y_1}{m_n} + \frac{x_r + y_r}{m_n} - 2, \end{equation}
putting together (\ref{ine1}), (\ref{ine2}) and (\ref{ine3}) we get the result. If $m_1 \geq n$ and $m_n - m_{n-1} \geq n$, it suffices to take $\tau = 1$
and $\lambda = n-1$ to prove the result.
\end{proof}

It is not difficult to build examples such that the bound provided by Theorem \ref{upperboundregmoncurve} outperforms the bound of L'vovsky's. Let us see an example. 

\begin{example} 
Set $n \geq 6$ and consider $m_i = n + i$ for all $i \in \{1,\ldots,n-1\}$ and $m_n = 3n$, then we can take $\tau = 1$ and $\lambda = n-1$ and apply Theorem \ref{upperboundregmoncurve} to prove that $${\rm reg}(K[\Scal]) \leq  \left\lfloor 3n \left(\frac{4}{n} +  \frac{1}{n+1} + \frac{1}{n+1}\right) \right\rfloor -4 = 13,$$  meanwhile the result of L'vovsky provides an upper bound of $2n+1$.
\end{example}

\section{Noether resolution and Macaulayfication of projective monomial curves associated to arithmetic sequences and their canonical projections.}

Every sequence $m_1 < \ldots < m_n$ of positive integers with $n \geq 2$ has
associated the projective monomial curve $\Ccal \subset {\mathbb
P}_K^n$ given parametrically by $x_i := s^{m_i}  t^{m_n-m_i}$ for
all $i \in \{1,\ldots,n-1\},\, x_{n} = s^{m_n},  \, x_{n+1} :=
t^{m_n}$. If we set $\Acal := \{a_1,\ldots,a_{n+1}\} \subset \N^2$
where $a_i := (m_i, m_n - m_i), a_n := (m_n, 0)$ and $a_{n+1} :=
(0,m_n)$, it turns out that $I_{\Acal} \subset K[x_1,\ldots,x_{n+1}]$ is the defining ideal of
$\Ccal$. Moreover, if we denote by $\Scal$ the semigroup generated by
$\Acal$, then $K[\Scal] \simeq K[x_1,\ldots,x_{n+1}]/I_{\Acal}$ is a dimension $2$
semigroup ring and the methods of the previous sections apply here
to describe its multigraded Noether resolution.

In \cite{LiPatilRoberts12}, the authors studied the set $\Scal_0$
whenever $m_1<\cdots< m_n$ is an arithmetic sequence of relatively
prime integers, i.e., there exist $d, m_1 \in \Z^+$ such that $m_i =
m_1 + (i - 1)\,d$ for all $i\in\{1,\ldots,n\}$ and
$\gcd\{m_1,d\}=1$. In particular, they obtained the following
result.

\begin{theorem}{\cite[Theorem 3.4]{LiPatilRoberts12}}\label{lipatil}

$\Scal_0 = \left\{\left(\left\lceil\frac{j}{n-1}\right\rceil
m_n-jd,\,jd \right) \ \vert\ j\in\{0,\ldots,m_n-1\}\right\}$
\end{theorem}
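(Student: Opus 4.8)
The plan is to identify $\Scal_0$ with the Apéry set of the numerical semigroup $\mathcal{R}=\langle m_1,\ldots,m_n\rangle$ and then to compute that Apéry set explicitly. The starting observation is that each generator $a_1,\ldots,a_n$ has coordinate sum $m_n$ and first coordinates $m_1,\ldots,m_n$, whereas $a_{n+1}=(0,m_n)$ contributes only to the second coordinate; hence $K[\Scal]$ is standard graded (here $\omega_i=m_n$ for all $i$) and every $s=(s_1,s_2)\in\Scal$ satisfies $s_1+s_2=Km_n$ for some $K\in\N$. Writing $\ell(b)$ for the least number of generators $m_i$ needed to express $b\in\mathcal{R}$, I would first establish the membership criterion: for $p,q\ge 0$ with $p+q=Km_n$, one has $(p,q)\in\Scal$ if and only if $p\in\mathcal{R}$ and $\ell(p)\le K$. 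Indeed a representation of $(p,q)$ uses some number $L\le K$ of the generators $a_1,\ldots,a_n$ (their first coordinates realize $p$ as a sum of $L$ elements of $\{m_1,\ldots,m_n\}$) together with $K-L$ copies of $a_{n+1}$, and the best admissible $L$ is exactly $\ell(p)$.

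With this criterion the reduction of $\Scal_0$ to an Apéry set is quick. Given $s=(s_1,s_2)\in\Scal$ of degree $K$, the element $s-a_{n+1}=(s_1,s_2-m_n)$ has degree $K-1$, so $s-a_{n+1}\notin\Scal$ together with $\ell(s_1)\le K$ forces $\ell(s_1)=K$, i.e. $s_2=\ell(s_1)m_n-s_1$; and $s-a_n=(s_1-m_n,s_2)$ has degree $\ell(s_1)-1$, so $s-a_n\notin\Scal$ is equivalent to $s_1-m_n\notin\mathcal{R}$, that is $s_1\in{\rm Ap}(\mathcal{R},m_n)$. The one nontrivial input is the key lemma that $\ell(b+m_n)=\ell(b)+1$ for every $b\in\mathcal{R}$ (this is what guarantees that once $s_1-m_n\in\mathcal{R}$ we can in fact remove $a_n$). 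I would prove it from the closed description $\ell(b)=\min\{L\ge\lceil b/m_n\rceil : L\equiv m_1^{-1}b \pmod d\}$, where $m_1^{-1}$ is the inverse of $m_1$ modulo $d$; this holds because $m_1\equiv\cdots\equiv m_n \pmod d$ and $\gcd(m_1,d)=1$. Replacing $b$ by $b+m_n$ shifts both the threshold $\lceil b/m_n\rceil$ and the residue $m_1^{-1}b\bmod d$ by exactly $1$, so the minimizing $L$ increases by exactly $1$. Consequently $s\mapsto s_1$ is a bijection from $\Scal_0$ onto ${\rm Ap}(\mathcal{R},m_n)$, with $s_2=\ell(s_1)m_n-s_1$ recovered from $s_1$.

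It then remains to compute ${\rm Ap}(\mathcal{R},m_n)$ together with the value of $\ell$ on it. Since $\gcd(d,m_n)=1$, the assignment $j\mapsto -jd\bmod m_n$ is a bijection $\{0,\ldots,m_n-1\}\to\Z/m_n$, so I would index residue classes by $j$ and seek the smallest element of $\mathcal{R}$ in the class $-jd\pmod{m_n}$. Using the description $\mathcal{R}=\{m_nL-dT : L\ge 0,\ 0\le T\le (n-1)L\}$ (valid since $m_i=m_n-(n-i)d$), the congruence condition becomes $T\equiv j\pmod{m_n}$, and for fixed admissible $T$ the value $m_nL-dT$ is minimized at $L=\lceil T/(n-1)\rceil$. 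A short monotonicity check shows $g(T+m_n)\ge g(T)$ for $g(T):=m_n\lceil T/(n-1)\rceil-dT$ (the ceiling jumps by at least $\lfloor m_n/(n-1)\rfloor\ge d$, using $m_n\ge(n-1)d$), so the minimum over $T\in\{j,j+m_n,\ldots\}$ is attained at $T=j$. This gives the Apéry element $p_j=\lceil j/(n-1)\rceil m_n-jd$ with $\ell(p_j)=\lceil j/(n-1)\rceil$, whence $s_2=\ell(p_j)m_n-p_j=jd$, yielding exactly the claimed description of $\Scal_0$.

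I expect the main obstacle to be the key lemma $\ell(b+m_n)=\ell(b)+1$: it is precisely the statement that each $\sim$-class of Proposition \ref{CMcharacSemigroup} contains a single minimal element, equivalently that $K[\Scal]$ is Cohen--Macaulay, and it is the step where the arithmetic hypotheses $m_i=m_1+(i-1)d$ and $\gcd(m_1,d)=1$ are essential, since they are what make the congruence-and-threshold description of $\ell$ exact. The membership criterion and the Apéry minimization are then elementary, routine consequences.
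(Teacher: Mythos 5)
The paper itself contains no proof of this statement: it is imported verbatim from \cite[Theorem 3.4]{LiPatilRoberts12}, so there is no internal argument to compare yours against. Judged on its own, your proof is correct. Your route --- the membership criterion $(p,q)\in\Scal \Leftrightarrow p+q\in m_n\N,\ p\in\mathcal{R},\ \ell(p)\le (p+q)/m_n$, the resulting bijection $s\mapsto s_1$ from $\Scal_0$ onto ${\rm Ap}(\mathcal{R},m_n)$ with $s_2=\ell(s_1)m_n-s_1$, and the explicit minimization over the presentation $\mathcal{R}=\{m_nL-dT \,\vert\, 0\le T\le (n-1)L\}$ --- is sound, and it meshes well with the paper: the fact that first coordinates of elements of $\Scal_0$ lie in ${\rm Ap}(\mathcal{R},m_n)$ is exactly the mechanism the paper exploits later in Proposition \ref{cotareguCM} and Theorem \ref{upperboundregmoncurve}. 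You were also right to prove the key lemma $\ell(b+m_n)=\ell(b)+1$ directly from the congruence-and-threshold description of $\ell$ rather than invoke Cohen--Macaulayness: the paper deduces Cohen--Macaulayness of $K[\Scal]$ \emph{from} Theorem \ref{lipatil} together with Proposition \ref{CMcharacSemigroup}, so quoting it would have been circular.

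Two steps you gloss over deserve a line each, though both are routine inside your framework. First, the closed formula $\ell(b)=\min\{L\ge\lceil b/m_n\rceil : L\equiv m_1^{-1}b \ ({\rm mod}\ d)\}$ is valid only for $b\in\mathcal{R}$, and its proof needs two observations: a representation with exactly $L$ parts exists if and only if $Lm_1\le b\le Lm_n$ and $b\equiv Lm_1 \ ({\rm mod}\ d)$; and the minimal feasible $L^{*}$ automatically satisfies $L^{*}m_1\le b$, because $L^{*}\le L_0$ for the part-count $L_0$ of any actual representation and $L_0m_1\le b$. Without this last comparison the formula could a priori name an unachievable $L$. Second, the assertion $\ell(p_j)=\lceil j/(n-1)\rceil$, which is what produces the second coordinate $jd$, needs justification; it follows from the same closed formula, since $m_1^{-1}p_j\equiv\lceil j/(n-1)\rceil\ ({\rm mod}\ d)$ and $0\le j<m_n$ gives $\lceil p_j/m_n\rceil>\lceil j/(n-1)\rceil-d$, so the minimum in the congruence class is attained at $\lceil j/(n-1)\rceil$ itself.
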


From the previous result and Proposition \ref{CMcharacSemigroup} we deduce that $K[\Scal]$ is Cohen-Macaulay (see also \cite[Corollary 2.3]{BerGarGar15}), we obtain the shifts of the only step of the multigraded Noether resolution
and, by Corollary \ref{regularity}, we also derive that ${\rm
reg}(K[\Scal]) = \lceil (m_n - 1)/(n-1) \rceil$ (see also
\cite[Theorem 2.7]{BerGarGar15}). In the rest of this section we are using the
tools developed in the previous sections to study the canonical
projections of $\Ccal$, i.e., for all $r \in \{1,\ldots,n-1\}$ and $n \geq 3$ we
aim at studying the curve $\Ccal_r := \pi_r(\Ccal)$ obtained as the image of $\Ccal$ under the projection $\pi_r$ from $ \mathbb{P}_K^n $ to 
$\mathbb{P}_K^{n-1}$ defined by $(p_1:\cdots:p_{n+1}) \mapsto
(p_1:\cdots:p_{r-1}:p_{r+1}:\cdots:p_{n+1})$. We know that the
vanishing ideal of $\Ccal_r$ is $I_{\Acal_r}$, where $\Acal_r =
\Acal \setminus \{a_r\}$ for all $r \in \{1,\ldots,n-1\}$.  Note that
$\Ccal_1$ is the projective monomial
curve associated to the arithmetic sequence $m_2 < \cdots < m_n$ and, thus, its Noether
resolution can also be  obtained by means of Theorem \ref{lipatil}.
Also when $n = 3$, 
$\Ccal_2$ is the curve associated to the arithmetic sequence $m_1 < m_3$.
For this reason, the rest of this section only concerns the study of
the multigraded Noether resolution of $\Ccal_r$ for $r \in \{2,\ldots,n-1\}$ and $n \geq 4$. 

\begin{remark}
Denote by $\Ccal_n$ and $\Ccal_{n+1}$ the Zariski closure of $\pi_{n}(\Ccal)$ and
$\pi_{n+1}(\Ccal)$ respectively. Then, both $\Ccal_n$ and $\Ccal_{n+1}$  are projective monomial
curves associated to arithmetic sequences and, thus, their Noether
resolutions can also be obtained by means of Theorem \ref{lipatil}. More
precisely, the corresponding arithmetic sequences are $m_1 < \cdots < m_{n-1}$ for $\Ccal_n$ and $1 <
2 < \cdots < n-1$ for $\Ccal_{n+1}$, i.e., $\Ccal_{n+1}$ is the
rational normal curve of degree $n-1$.
\end{remark}

We denote by $\Pcal_r$ the semigroup generated by $\Acal_r$ for $r \in \{2,\ldots,n-1\}$ and $n \geq 4$. Proposition 
\ref{Semigroup_QuasiArithmetics} shows how to get the semigroups $\Pcal_r$
from $\Scal$. In the proof of this result we will use the
following two lemmas, both of them can be directly deduced from
\cite[Lemma 2.1]{BerGarGar15}.

\begin{lemma}\label{min}
Set $q := \lfloor (m_1 - 1)/ (n-1) \rfloor \in \N$; then,
\begin{itemize}
\item[(a)] $q + d + 1 = {\rm min}\{b \in \Z^+\, \vert \, b m_1
\in \sum_{i = 2}^n \N m_i\}$
\item[(b)]  $q + 1 = {\rm min}\{b \in
\Z^+\, \vert \, b m_n \in \sum_{i = 1}^{n-1} \N m_i\}$
\item[(c)] $(q+d)a_1 + a_{i} = a_{l+i} + q a_n + d a_{n+1}$ for all $i \in \{1,\ldots,n-l\}$, where $l := m_1 - q(n-1) \in
\{1,\ldots,n-1\}$.
\end{itemize}
\end{lemma}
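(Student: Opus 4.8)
The plan is to treat the three statements separately, since (c) is a direct computation in $\N^2$ while (a) and (b) are minimization problems governed by the arithmetic structure. Throughout I would use the normal form $m_1 = q(n-1) + l$ with $l = m_1 - q(n-1)$; writing $m_1 - 1 = q(n-1) + (l-1)$ shows at once that $q = \lfloor (m_1-1)/(n-1)\rfloor$ forces $l \in \{1,\ldots,n-1\}$, as the statement asserts. Two elementary remarks will be used repeatedly: since $m_n = m_1 + (n-1)d$ we have $\gcd(m_n, d) = \gcd(m_1, d) = 1$; and every generator has coordinate-sum $m_n$, i.e. the two entries of $a_j$ add up to $m_n$ for all $j \in \{1, \ldots, n+1\}$ (note that $a_n = (m_n, 0) = (m_n, m_n - m_n)$ and $a_{n+1} = (0, m_n)$ fit the same pattern).

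For (c) I would simply expand both sides. Because $i \leq n - l \leq n-1$, we have $a_i = (m_i, m_n - m_i)$, and treating $a_n$ as $(m_n, m_n - m_n)$ lets me write $a_{l+i} = (m_{l+i}, m_n - m_{l+i})$ uniformly for $l+i \leq n$, so no case split is needed. Comparing first coordinates reduces the identity to $(q+d+1)m_1 + (i-1)d = (q+1)m_1 + (l+i-1+q(n-1))d$, which collapses to $d\,(m_1 - l - q(n-1)) = 0$; this vanishes by the definition of $l$. Since both sides have coordinate-sum $(q+d+1)m_n$ (each generator contributing $m_n$), equality of the first coordinates forces equality of the second, and (c) follows.

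For (a) and (b) the strategy is identical: convert the membership condition into a weighted counting problem, exploit coprimality of $d$ to force a divisibility, and reduce to minimizing a single-variable function of an "overshoot" parameter $k$. For (a), writing $bm_1 = \sum_{i=2}^n c_i m_i$ with $C := \sum c_i$ gives $(b-C)m_1 = d\sum_{i=2}^n c_i(i-1)$; coprimality of $m_1$ and $d$ makes the right-hand sum a multiple $km_1$, so $b = C + kd$, and since $k=0$ yields only the trivial relation we need $k \geq 1$. The weights $i-1$ range over $\{1,\ldots,n-1\}$, so for fixed $k$ the least admissible $C$ is $\lceil km_1/(n-1)\rceil$, whence $b(k) = \lceil km_1/(n-1)\rceil + kd$. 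The analogous computation for (b), using $m_i = m_n - (n-i)d$, gives $b(k) = \lceil km_n/(n-1)\rceil - kd$. Achievability of the claimed minima is explicit (for (a) it can also be read off from (c) with $i=1$), so the heart of the matter is showing $k=1$ is optimal.

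This minimality is the main obstacle, and I would settle it by monotonicity of $b(k)$. Using $\lceil (k+1)x\rceil - \lceil kx\rceil \geq \lfloor x\rfloor$, in case (a) I get $b(k+1) - b(k) \geq \lfloor m_1/(n-1)\rfloor + d \geq q + d \geq 1$, so $b$ is strictly increasing and the minimum is $b(1) = \lceil m_1/(n-1)\rceil + d = (q+1) + d$; in case (b) I get $b(k+1) - b(k) \geq \lfloor m_n/(n-1)\rfloor - d \geq (q+d) - d = q \geq 0$, so $b$ is non-decreasing and the minimum is $b(1) = \lceil m_n/(n-1)\rceil - d = (q+d+1) - d = q+1$. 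This yields the two claimed values. Alternatively, all three identities can be deduced directly from \cite[Lemma 2.1]{BerGarGar15}, but the argument above keeps the proof self-contained.
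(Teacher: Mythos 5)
Your proof is correct, and it takes a genuinely different route from the paper: in fact the paper gives no proof at all of Lemma \ref{min}, remarking only that it (together with Lemma \ref{smg}) ``can be directly deduced from'' Lemma 2.1 of \cite{BerGarGar15}, so your self-contained argument replaces an external citation rather than an internal proof. All the key steps check out. For (c), comparing first coordinates does reduce the identity to $m_1 = l + q(n-1)$, and since every generator $a_j$, $j \in \{1,\ldots,n+1\}$, has coordinate sum $m_n$ and both sides involve exactly $q+d+1$ generators, equality of first coordinates legitimately forces equality in $\N^2$. For (a) and (b), the substitutions $m_i = m_1 + (i-1)d$ and $m_i = m_n - (n-i)d$ together with $\gcd(m_1,d) = \gcd(m_n,d) = 1$ correctly yield $b = C + kd$ (resp.\ $b = C - kd$) with $k \geq 1$, the minimal number of parts from $\{1,\ldots,n-1\}$ summing to $km_1$ (resp.\ $km_n$) is indeed $\lceil km_1/(n-1)\rceil$ (resp.\ $\lceil km_n/(n-1)\rceil$), the ceiling inequality $\lceil (k+1)x\rceil - \lceil kx\rceil \geq \lfloor x\rfloor$ holds, and the endpoint evaluations $\lceil m_1/(n-1)\rceil = q+1$ and $\lceil m_n/(n-1)\rceil = q+d+1$ follow from $m_1 = q(n-1)+l$ with $1 \leq l \leq n-1$; achievability is indeed supplied by (c) (take $i=1$ for (a) and $i=n-l$ for (b)) or by the explicit part decompositions. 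What each approach buys: the paper's citation is economical and treats these facts as known from the arithmetic-sequence literature, while your argument keeps the proof readable without consulting \cite{BerGarGar15}, isolates exactly where the hypothesis $\gcd(m_1,d)=1$ enters, and exhibits the extremal representations explicitly.
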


\begin{lemma}\label{smg} For all $r \in \{2,\ldots,n-1\}$, we have that $m_r \in \sum_{i \in \{1,\ldots,n\}
\setminus \{r\}} \N m_i$ if and only if $r > m_1$.
\end{lemma}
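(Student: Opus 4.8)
The plan is to prove both implications directly from the arithmetic structure $m_i = m_1 + (i-1)d$ together with the coprimality hypothesis $\gcd(m_1,d)=1$, rather than extracting the statement from \cite[Lemma 2.1]{BerGarGar15}. The organizing idea is that any representation $m_r = \sum_{i\neq r} c_i m_i$ with $c_i \in \N$ is controlled by two integers: the total multiplicity $N := \sum_{i\neq r} c_i$ and the weighted index sum $S := \sum_{i\neq r} c_i (i-1)$. Substituting $m_i = m_1 + (i-1)d$ gives $m_r = N m_1 + S d$, and comparing with $m_r = m_1 + (r-1)d$ yields the single governing identity
$$ m_1 (N-1) = d\bigl( (r-1) - S \bigr). $$
Both directions of the equivalence will be read off from this identity, so no appeal to Lemma \ref{min} is needed.

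For the implication $r > m_1 \Rightarrow m_r \in \sum_{i\neq r}\N m_i$, I would just exhibit an explicit witness. Assuming $r \geq m_1+1$, the claim is that
$$ m_r = d\, m_1 + m_{r-m_1}, $$
which one checks immediately by expanding the right-hand side to $d m_1 + \bigl(m_1 + (r-m_1-1)d\bigr) = m_1 + (r-1)d = m_r$. The only thing to verify is that the indices used are admissible: the index $1$ differs from $r$ since $r \geq 2$, and the index $r-m_1$ satisfies $1 \leq r-m_1 \leq r-1 \leq n-2 < n$ and $r-m_1 < r$, so it lies in $\{1,\ldots,n\}\setminus\{r\}$; the degenerate case $r=m_1+1$ simply gives $m_r=(d+1)m_1$. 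Here I use $d\geq 1$ and $m_1\geq 1$, which hold because $d,m_1\in\Z^+$.

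For the converse, suppose $m_r = \sum_{i\neq r} c_i m_i$. From the governing identity and $\gcd(m_1,d)=1$ one gets $d \mid (N-1)$. The crucial point is to show $N\geq 2$: clearly $N\geq 1$ since $m_r>0$, and $N=1$ would force $m_r = m_i$ for a single $i\neq r$, contradicting that the $m_i$ are pairwise distinct. With $N\geq 2$ and $d\mid (N-1)$ I may write $N-1 = dk$ for an integer $k\geq 1$; substituting back gives $m_1 k = (r-1)-S$, so that $r-1 = m_1 k + S \geq m_1$ because $S\geq 0$ and $k\geq 1$, whence $r>m_1$. I expect this last step --- ruling out $N=1$ so as to guarantee $k\geq 1$, and then combining $k\geq 1$ with $S\geq 0$ --- to be the main (and only genuinely delicate) obstacle; the passage from the representation to the integers $N,S$ and the divisibility deduction are routine.
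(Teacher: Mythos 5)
Your proof is correct, and it takes a genuinely different route from the paper: the paper gives no argument at all for Lemma \ref{smg}, stating only that it (together with Lemma \ref{min}) can be directly deduced from \cite[Lemma 2.1]{BerGarGar15}, an external result on generalized arithmetic sequences. Your argument is self-contained and elementary. The forward direction via the explicit witness $m_r = d\,m_1 + m_{r-m_1}$ is exactly the numerical shadow of the semigroup identity $a_r + d\,a_{n+1} = d\,a_1 + a_{r-m_1}$ that the paper itself invokes later (proof of Proposition \ref{Semigroup_QuasiArithmetics}, case (b.2)), so it is consistent with how the lemma gets used. The converse, via the invariants $N=\sum_{i\neq r} c_i$ and $S=\sum_{i\neq r} c_i(i-1)$, the governing identity $m_1(N-1)=d\bigl((r-1)-S\bigr)$, coprimality giving $d \mid N-1$, and the exclusion of $N\in\{0,1\}$ (using $m_r>0$ and the strict monotonicity of the $m_i$), is complete: the resulting estimate $r-1=m_1k+S\geq m_1$ with $k\geq 1$, $S\geq 0$ is airtight. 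This mod-$d$ bookkeeping is the same device the paper deploys inside the proof of Proposition \ref{Semigroup_QuasiArithmetics} (cases (b.1), (b.2) and (a.3)), so your proof in effect inlines the content of the cited lemma rather than importing it. What the paper's citation buys is brevity and uniformity, since Lemma \ref{min} --- which genuinely needs more work, involving the minimality statements about $q+d+1$ and $q+1$ --- comes from the same source; what your proof buys is that the equivalence becomes verifiable without consulting \cite{BerGarGar15}, and it isolates precisely where $\gcd(m_1,d)=1$ and the arithmetic-progression structure enter.
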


 \begin{proposition}\label{Semigroup_QuasiArithmetics} Set $q :=
\lfloor (m_1 - 1)/ (n-1) \rfloor$ and $l := m_1 - q(n-1)$. If $r\leq
m_1$, then
\begin{itemize}
\item[(a.1)] for $r=2$, \\ $\Scal \setminus \Pcal_2 = \left\{\begin{array}{ll} \left\{\mu a_1 + a_2 + \lambda\,a_{n+1}\ \vert\
0 \leq \mu \leq q+d-1,\lambda\in\N\right\}, &  {\rm if} \ l
\neq n-1, \\
 \left\{\mu a_1 + a_2 + \lambda\,a_{n+1}\ \vert\
0 \leq \mu \leq q+d,\lambda\in\N\right\}, & {\rm if}\ l =
n-1,\end{array}\right.$
\item[(a.2)] for $r \in
\{3,\ldots,n-2\}$, $\Scal\setminus \Pcal_r = \left\{a_r+\lambda\,a_{n+1}\ \vert\ \lambda\in\N\right\},$ and
\item[(a.3)] for $r=n-1$,  \\ $\Scal \setminus \Pcal_{n-1} = \left\{\begin{array}{ll} \left\{a_{n-1} + \mu a_n + \lambda\,a_{n+1}\ \vert\
0\leq\mu\leq q-1 {\rm \ or\ }0\leq\lambda\leq d-1
\right\}, & {\rm if}\ l \neq n-1, \\
\left\{a_{n-1} + \mu a_n + \lambda\,a_{n+1}\ \vert\
0\leq\mu\leq q {\rm \ or\ } 0\leq\lambda\leq d-1
\right\}, & {\rm if}\ l = n-1.\end{array}\right.$
\end{itemize} If $r > m_1$, then
\begin{itemize}
\item[(b.1)] for $r=2$, $\Scal\setminus \Pcal_2 = \{\mu a_1 + a_2+\lambda a_{n+1}\ \vert\
0 \leq \mu,\lambda \leq d-1\}$ ,
\item[(b.2)] for $r\in\{3,\ldots,n-2\}$, $\Scal\setminus \Pcal_r = \{a_r+\lambda a_{n+1}\ \vert\ 0 \leq \lambda \leq d-1\}$, and
\item[(b.3)] for $r=n-1$, $\Scal\setminus \Pcal_{n-1} = \{a_{n-1}+ \mu a_n + \lambda a_{n+1}\ \vert\ \mu \in \N, 0 \leq \lambda \leq d-1\}$.
\end{itemize}
\end{proposition}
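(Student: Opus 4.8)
The plan is to collapse the two–dimensional membership test for $\Pcal_r$ to a one–dimensional (numerical) one, using that every generator $a_i$ has coordinate sum $m_n$. Thus $\Scal$ is graded by the \emph{level} $k(s):=(s_1+s_2)/m_n$, and since $a_{n+1}=(0,m_n)$ contributes $0$ to the first coordinate, an element $s=(x,y)$ of level $k$ lies in $\Scal$ exactly when $x$ is a sum of at most $k$ of the numbers $m_1,\ldots,m_n$ (the unused slots being filled with copies of $a_{n+1}$); the same holds for $\Pcal_r$ once $m_r$ is deleted from the allowed summands. Writing $N(x)$ (resp.\ $N_r(x)$) for the least number of summands needed to represent $x$ from $\{m_1,\ldots,m_n\}$ (resp.\ from $\{m_1,\ldots,m_n\}\setminus\{m_r\}$), this gives the criterion
\[ s=(x,y)\in\Scal\setminus\Pcal_r \iff N(x)\le k(s) < N_r(x). \]
I would also record the arithmetic trade relations $a_i+a_j=a_{i'}+a_{j'}$, valid for all indices in $\{1,\ldots,n\}$ with $i+j=i'+j'$ (immediate from $a_i=(m_i,(n-i)d)$), as they are the tool for moving between representations of a fixed $x$ without changing the number of summands.

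With this criterion the forward inclusions are short. For the middle range $r\in\{3,\ldots,n-2\}$ every listed element has first coordinate $m_r$, where $N(m_r)=1$; Lemma~\ref{smg} then gives $N_r(m_r)=\infty$ when $r\le m_1$ (all levels, case (a.2)) and $N_r(m_r)<\infty$ when $r> m_1$ (a bounded band of levels, case (b.2), where one checks $N_r(m_r)=d+1$). For the extreme cases $r=2$ and $r=n-1$ the relevant coordinate is $\mu m_1+m_2$ (resp.\ the second coordinate $\lambda m_n+d$ of $a_{n-1}+\mu a_n+\lambda a_{n+1}$, by the $s\leftrightarrow t$ symmetry), and the cut-off for $\mu$ (or $\lambda$) is read off from Lemma~\ref{min}: reducing $\mu m_1+m_2\in\langle m_1,m_3,\ldots,m_n\rangle$ modulo $m_1$ and using $\gcd(m_1,d)=1$ turns representability into the condition that the weighted index sum $\sum_{i}\alpha_i(i-1)\equiv 1\ (\mathrm{mod}\ m_1)$, whose least admissible value forces the threshold $q+d$ or $q+d+1$ according as $l\neq n-1$ or $l=n-1$ — exactly the dichotomy in the statement.

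For the reverse inclusions I would start from any $s=(x,y)\in\Scal\setminus\Pcal_r$ and fix a representation of $x$ of length $p\le k(s)$. Since the trade relations preserve both $x$ and the number of summands, I may apply them freely without breaking the level budget: a swap $a_r+a_j=a_{r-1}+a_{j+1}$ or $a_r+a_j=a_{r+1}+a_{j-1}$ removes one copy of $a_r$ whenever a partner of suitable index is present. For middle $r$ this deletes every $a_r$ unless the representation is the single term $m_r$; for $r=2$ the deletions stall only when the surviving $a_2$ sits among copies of $a_1$ (so $x=\mu m_1+m_2$), and dually for $r=n-1$ among copies of $a_n$ (so $x=m_{n-1}+\mu m_n$). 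As a representation free of $a_r$ would exhibit $s\in\Pcal_r$, at least one $a_r$ must survive, pinning $s$ to the stated normal form. The admissible range of $\mu$ (and, for $r=n-1$, the disjunction on $\mu,\lambda$) then follows by contradiction: if $\mu$ exceeded the bound, the relation $(q+d)a_1+a_i=a_{l+i}+qa_n+da_{n+1}$ of Lemma~\ref{min}(c) (or its $s\leftrightarrow t$ mirror) would expand the surviving $\mu a_1+a_2$ (resp.\ $a_{n-1}+\mu a_n$) into an $a_r$-free combination of the same length, again contradicting $s\notin\Pcal_r$.

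The delicate part, and the step I expect to cost the most care, is the reverse inclusion for the two extreme cases (a.1)/(a.3) and their $r>m_1$ analogues (b.1)/(b.3). There the surviving $a_r$ interacts with $a_1$ or with the pair $(a_n,a_{n+1})$, and the precise cut-off depends on $l$ through Lemma~\ref{min}(c); one must verify both that the trade process genuinely terminates in the stated normal form and that the union description in (a.3)/(b.3) — ``$0\le\mu\le q-1$ \emph{or} $0\le\lambda\le d-1$'' — records exactly the pairs $(\mu,\lambda)$ for which no $a_r$-free representation fits within the available level. Keeping the interplay between the coefficient $\mu$ and the level budget $k$ correct, so that ``representable without $a_r$'' coincides with ``with at most $k$ summands'', is the combinatorial crux.
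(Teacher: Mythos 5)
Your framework is sound and, under the packaging, it is the paper's own strategy. The level criterion ($s=(x,y)\in\Pcal_r$ iff $x$ is a sum of at most $k(s)=(x+y)/m_n$ elements of $\{m_1,\ldots,m_n\}\setminus\{m_r\}$) is a correct reformulation; the trade identities $a_i+a_j=a_{i'}+a_{j'}$ for $i+j=i'+j'$ are exactly the relations the paper manipulates; and your stall analysis, showing that the only configurations from which $a_r$ cannot be traded away are $a_r$ alone (middle $r$), $\mu a_1+a_2$ ($r=2$) and $a_{n-1}+\mu a_n$ ($r=n-1$), actually proves the normal form that the paper merely asserts in its opening sentence (``we express every $s\in\Scal$ as $\alpha_1a_1+\epsilon_ia_i+\alpha_na_n+\alpha_{n+1}a_{n+1}$''). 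Cases (a.2) and (b.2) are essentially complete: the one check you postpone, $N_r(m_r)=d+1$ for $r>m_1$, follows from the identity $a_r+d\,a_{n+1}=d\,a_1+a_{r-m_1}$ (upper bound) and from $\gcd(m_1,d)=1$ forcing $\sum_j\beta_j\equiv 1\pmod d$ in any representation (lower bound), which is also how the paper argues.

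The genuine shortfall is that the thresholds in the four extreme cases are never derived, and that derivation is where the paper's proof spends almost all of its effort: the $l$-dependent cut-off $q+d-1$ versus $q+d$ in (a.1), the box $0\le\mu,\lambda\le d-1$ in (b.1), and above all the disjunction ``$0\le\mu\le q-1$ or $0\le\lambda\le d-1$'' in (a.3)/(b.3), which the paper obtains through a mod-$d$ congruence argument (blocking membership when $\alpha_{n+1}<d$), an application of Lemmas~\ref{smg} and~\ref{min} forcing $\alpha_n\ge q$, and a separate sub-argument when $l=n-1$. You assert these bounds will be ``read off'' from Lemma~\ref{min} and flag the bookkeeping as the crux, but you do not carry it out. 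Note also that you mislabel the hard direction: the reverse inclusions are the easy ones (they follow from the identities of Lemma~\ref{min}(c) plus at most one extra trade such as $a_2+a_n=a_3+a_{n-1}$), while the difficulty sits in the forward inclusions, i.e., in proving non-membership. To be fair, your plan does close: for (a.1) the congruence $\sum_{i\ge 3}\alpha_i(i-1)\equiv 1\pmod{m_1}$, together with each weight $i-1\ge 2$ and $m_1\ge 2$, forces the weighted sum to be at least $m_1+1$ and hence $\mu\ge q+d$ (resp.\ $q+d+1$ when $l=n-1$); for (a.3)/(b.3) a summand count on representations of the second coordinate $d+\lambda m_n$ shows membership in $\Pcal_{n-1}$ requires both $\lambda\ge d$ and $\mu\ge q$ (resp.\ $q+1$), which is exactly the stated ``or'' after negation. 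So this is a gap of execution at the acknowledged crux — the approach itself would not fail.
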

\begin{proof} We express every $s \in \Scal$ as $s =
\alpha_1 a_1 + \epsilon_i a_i + \alpha_n a_n + \alpha_{n+1}
a_{n+1}$, with $\alpha_1, \alpha_n, \alpha_{n+1} \in \N$, $i \in
\{2,\ldots,n-1\}$ and $\epsilon_i \in \{0,1\}$. Whenever $\epsilon_i
= 0$ or $i \neq r$, it is clear that $s \in \Pcal_r$. Hence, we
 assume that $s = \alpha_1 a_1 + a_r + \alpha_n a_n + \alpha_{n+1}
a_{n+1}$ and the idea of the proof is to characterize the values of
$\alpha_1,\alpha_n,\alpha_{n+1}$ so that $s \in \Pcal_r$ in each
case.

Assume first that $r \in \{3,\ldots,n-2\}$ and let us prove {\it
(a.2)} and {\it (b.2)}. If $\alpha_1
> 0$ or $\alpha_n > 0$, the equalities $a_1 + a_r = a_2 + a_{r-1}$
and $a_r + a_n = a_{r+1} + a_{n-1}$ yield that $s \in \Pcal_r$, so
it suffices to consider when $s = a_r + \alpha_{n+1} a_{n+1}$. If $r
\leq m_1$, then by Lemma \ref{smg} we get that $s \notin \Pcal_r$
because the first coordinate of $s$ is precisely $m_r$. This proves
{\it (a.2)}. If $r > m_1$ and $\alpha_{n+1} \geq d$, then the
equality $a_r + d a_{n+1} = d a_1 + a_{r-m_1}$ yields that $s \in
\Pcal_r$. However, if $\alpha_{n+1} < d$ we are proving that $s
\notin \Pcal_r$. Suppose by contradiction that $s \in \Pcal_r$ and $\alpha_{n+1}<d$, then
\begin{equation}\label{eq1} s = a_r + \alpha_{n+1} a_{n+1} = \sum_{j \in \{1,\ldots,n+1\}
\setminus \{r\}} \beta_j a_j \end{equation} for some $\beta_j \in
\N$, then $d \geq 1 + \alpha_{n+1} = \sum_{j \in \{1,\ldots,n+1\}
\setminus \{r\}} \beta_j$. Moreover, observing the first coordinates
in (\ref{eq1}) we get that $m_r = \sum_{j \in \{1,\ldots,n\}
\setminus \{r\}} \beta_j m_j$. Hence, $m_1 + (r - 1)d = \sum_{j
\{1,\ldots,n\} \setminus \{r\}} \beta_j (m_1 + (j-1) d)$ and, since
$\gcd\{m_1,d\}=1$, this implies that $d$ divides $(\sum_{j
\{1,\ldots,n\} \setminus \{r\}} \beta_j) - 1$, but $0 < (\sum_{j \in
\{1,\ldots,n\} \setminus \{r\}} \beta_j) - 1 < d$, a contradiction.
Thus {\it (b.2)} is proved.

Since the proof of {\it (a.1)} is similar to the proof of {\it
(a.3)} we are not including it here. So let us prove {\it (b.1)}.
Assume that $r = 2$. If $\alpha_n > 0$ the equality $a_2 + a_n =
a_{3} + a_{n-1}$ yields that $s \in \Pcal_2$, so it suffices to
consider when $s = \alpha_1 a_1 + a_r + \alpha_{n+1} a_{n+1}$. If
$\alpha_1 \geq d$, then the identity $d a_1 + a_2 = a_3 + d a_{n+1}$
yields that $s \in \Pcal_2$. For $\alpha_1 < d$, if $\alpha_{n+1}
\geq d$, the equality $\alpha_1 a_1 + a_2 + d a_{n+1} = (\alpha_1 +
d + 1) a_1$ also yields that $s \in \Pcal_2$. Thus, to conclude {\it
(b.1)} it only remains to proof that $s \notin \Pcal_2$ when
$\alpha_1, \alpha_{n+1} < d$. Indeed, assume that $\alpha_1 a_1 +
a_2 + \alpha_{n+1} a_{n+1} = \sum_{j \in \{1,3,\ldots,n+1\}} \beta_j
a_j$. Observing the first coordinate of the equality we get that
$\alpha_1 + m_2 = \sum_{j \in \{1,3,\ldots,n\}} \beta_j m_j$, but
$\alpha_1 + m_2 < m_3 < \cdots < m_n$, so $\beta_3 = \cdots =
\beta_{n+1} = 0$. But this implies that $\beta_1 = \alpha_1 + d + 1$
and, hence, $\beta_{n+1} < 0$, a contradiction.

Assume now that $r = n-1$. If $\alpha_1 > 0$, the equality $a_1 +
a_{n-1} = a_{2} + a_{n-2}$ yields that $s \in \Pcal_{n-1}$, so it
suffices to consider when $s = a_{n-1} + \alpha_n a_n + \alpha_{n+1}
a_{n+1}$. Whenever $s \in \Pcal_{n-1}$, then $s$ can be expressed as
$s = \sum_{j \in \{1,\ldots,n-2,n,n+1\}} \beta_j a_j$, if we
consider both expressions of $s$, we get that
\begin{itemize}\item[(i)] $\sum_{j \in \{1,\ldots,n-2,n,n+1\}}
\beta_i = 1 + \alpha_n + \alpha_{n+1}$, and \item[(ii)] $\sum_{j \in
\{1,\ldots,n-2,n\}} \beta_j m_j = m_{n-1} + \alpha_n
m_n.$\end{itemize} If $\alpha_{n+1} < d$ we are proving that $s
\notin \Pcal_{n-1}$. Assume by contradiction that $s \in
\Pcal_{n-1}$. From (ii) and Lemma \ref{min} we deduce that $\beta_n
< \alpha_n$. Moreover,  if we expand (ii) considering that $m_i =
m_1 + (i-1)d$ for all $i \in \{1,\ldots,n\}$ and that $\gcd\{m_1,d\}
= 1$, we get that $d$ divides $\sum_{j \in \{1,\ldots,n-2,n\}}
\beta_j - \alpha_n - 1 = \alpha_{n+1} - \beta_{n+1}$, a
contradiction to $0 < \alpha_{n+1} - \beta_{n+1} < d$.

\noindent {\emph Case 1: $m_1 \geq n-1$}. Assume that $s \in
\Pcal_{n-1}$. By (ii) and Lemma \ref{smg} we have that $\beta_n <
\alpha_n$, so there exists $j_0 \in \{1,\ldots,n-2\}$ such that
$\beta_{j_0} > 0$. As a consequence, if we add $d - \beta_n m_n$ in
both sides of (ii) we get that $(\alpha_n + 1 - \beta_n)m_n =
\sum_{j \in \{1\ldots,n-2\}} \beta_j m_j - m_{j_0} + m_{j_0  + 1}
\in \sum_{j \in \{1,\ldots,n-1\}} \N m_j$. Hence, by Lemma \ref{min}
we have that $\alpha_n \geq \alpha_n - \beta_n \geq q$. If $l <
n-1$, for $\alpha_n \geq q$, $\alpha_{n+1} \geq d$ the equality of
Lemma \ref{min} $(q+d) a_1 + a_{n-l-1} = a_{n-1} + q a_n + d
a_{n+1}$ shows that $s \in \Pcal_{n-1}$. This proves {\it (a.3)}
whenever $l\leq n-1$. If $l = n-1$, for $\alpha_n \geq q+1$,
$\alpha_n \geq d$, again the equality $(q+d+1) a_1 = (q+1)a_n + d
a_{n+1}$ shows that $s \in \Pcal_{n-1}$. It only remains to prove
that if $\alpha_n = q$; then $s \notin \Pcal_2$. Assume by
contradiction that $a_{n-1} + q a_n + \alpha_{n+1} a_{n+1} = \sum_{j
\in \{1,\ldots,n-2,n,n+1\}} \beta_j a_j$. Then, the first
coordinates of this equality yield that $m_{n-1} + q m_n = \sum_{j
\in \{1,\ldots,n-2,n\}} \beta_j m_j$ and we deduce by Lemma
\ref{smg} that $\beta_n < q$ and, hence, there exists $j_0 \in
\{1,\ldots,n-2\}$ such that $\beta_{j_0}
> 0$. We denote $\beta_{n-1} := 0$, $\lambda_j := \beta_j$ for all $j
\in \{1,\ldots,n\} \setminus \{j_0, j_0 - 1\}$, $\lambda_{j_0} =
\beta_{j_0} - 1$, $\lambda_{j_0+1} = \beta_{j_0+1} + 1$, then adding
$d$ in both sides of the equality and using Lemma \ref{min}, we get
that $(q+1)m_n = (q+d+1)m_1 = \sum_{j \in \{1,\ldots,n\}} \lambda_j
m_j \in \sum_{i = 1}^n \N m_i$. However, $\lambda_1 \neq q+d+1$,
$\lambda_{n} \neq q+1$, so applying iteratively the equalities $a_i
+ a_j = a_{i-1} + a_{j+1}$ for all $2 \leq i \leq j \leq n-1$ we
express $\sum_{j \in \{1,\ldots,n\}} \lambda_j m_j$ as $\mu_1 m_1 +
\epsilon_k m_k + \mu_n m_n$ with $\mu_1, \mu_m \in \N$, $k \in
\{2,\ldots,n-1\}$, $\epsilon_k \in \{0,1\}$. It is clear that $\mu_1
\neq q+d+1$ and that $\mu_n \neq q+1$ and one of those is nonzero,
so this contradicts the minimality of $q+d+1$ or $q+1$.

To prove {\it (b.3)} it only remains to prove that if $\alpha_{n+1}
\geq d$, then $s \in \Pcal_{n-1}$, but this easily follows from the
relation $a_{n-1} + d a_{n+1} = d a_1 + a_{n-1-m_1}$.
\end{proof}

\medskip From the previous result and  Proposition \ref{S0} it is not difficult to obtain the following corollary,
which provides the shifts of the first step of a multigraded Noether resolution
of $K[\Pcal_r]$ for all $r \in \{2,\ldots,n-1\}$, namely
$(\Pcal_r)_0 := \{s \in \Pcal_r \, \vert \, s - a_n, s - a_{n+1}
\notin \Pcal_r\}$. Indeed, Corollary \ref{S_0_QuasiArithmetics} describes $(\Pcal_r)_0$ from the set $\Scal_0$ given
by Theorem \ref{lipatil}.

 \begin{corollary}\label{S_0_QuasiArithmetics}We denote $t_{\mu} :=
\mu a_1 + a_2$ for all $\mu \in \N$. If $r\leq m_1$, then
\begin{itemize}
\item[(a.1)] for $r=2$, \\ $(\Pcal_2)_0 = \left\{\begin{array}{ll} (\Scal_0  \setminus \left\{t_{\mu} \, \vert \,
0 \leq \mu \leq q+d-1 \right\}) \cup \left\{t_{\mu} + a_n\, \vert \,
 0 \leq \mu \leq q+d-1 \right\}, & {\rm if}\ l \neq n-1,\\
(\Scal_0  \setminus \left\{t_{\mu} \, \vert \,
0 \leq \mu \leq q+d \right\}) \cup \left\{t_{\mu} + a_n\, \vert \, 0
\leq \mu \leq q+d \right\}, & {\rm if} \ l = n-1,\end{array}\right.$
\item[(a.2)] for $r \in
\{3,\ldots,n-2\}$, $(\Pcal_r)_0 = (\Scal_0 \setminus \{a_r\}) \cup \{a_r+a_n\}$,
\item[(a.3)] for $r=n-1$, \\ $(\Pcal_{n-1})_0 = \left\{\begin{array}{ll} (\Scal_0 \setminus \{a_{n-1}\}) \cup \{a_{n-1}+ q a_n + d a_{n+1}\},
 & {\rm if}\  l\neq n-1,\\
(\Scal_0 \setminus \{a_{n-1}\}) \cup \{a_{n-1}+ (q+1) a_n + d a_{n+1}\}, & {\rm if} \ l = n-1.\end{array}\right.$
\end{itemize} If $r > m_1$, then
\begin{itemize}
\item[(b.1)] for $r=2$, $(\Pcal_2)_0 = (\Scal_0  \setminus \left\{t_{\mu} \, \vert \,
0 \leq \mu \leq d-1 \right\}) \cup \left\{t_{\mu} + a_n, t_{\mu} + d
a_{n+1}\, \vert \, 0 \leq \mu \leq d-1 \right\}$,
\item[(b.2)] for $r\in\{3,\ldots,n-2\}$, $(\Pcal_r)_0 = (\Scal_0  \setminus \left\{a_r \right\}) \cup \left\{a_r+a_n, a_r+d a_{n+1}\right\}$ , and
\item[(b.3)] for $r=n-1$, $(\Pcal_{n-1})_0 = (\Scal_0  \setminus \left\{a_{n-1} \right\}) \cup \left\{a_{n-1}+d a_{n+1}\right\}$.
\end{itemize}

\end{corollary}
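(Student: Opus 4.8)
The plan is to run Proposition \ref{S0} for the semigroup $\Pcal_r$ against the explicit gap set provided by Proposition \ref{Semigroup_QuasiArithmetics}. Since $a_n$ and $a_{n+1}$ belong to $\Acal_r$ for every $r \in \{2,\ldots,n-1\}$, the ring $A = K[x_n,x_{n+1}]$ is again a Noether normalization of $K[\Pcal_r]$, so Proposition \ref{S0} gives
$$(\Pcal_r)_0 = \{s \in \Pcal_r \mid s - a_n \notin \Pcal_r \text{ and } s - a_{n+1} \notin \Pcal_r\}.$$
Writing $G := \Scal \setminus \Pcal_r$, I would freely use the relations $a_i + a_j = a_{i-1}+a_{j+1}$ (valid for $2 \le i \le j \le n-1$), the minimality thresholds of Lemma \ref{min}, Lemma \ref{smg}, and the explicit form of $\Scal_0$ from Theorem \ref{lipatil} to decide membership in $\Pcal_r$.

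The backbone is a master decomposition of $(\Pcal_r)_0$ into an inherited part and a new part. First, if $s \in \Scal_0 \cap \Pcal_r$ then $s - a_n, s - a_{n+1} \notin \Scal \supseteq \Pcal_r$, so $s \in (\Pcal_r)_0$; conversely any element of $\Scal_0 \cap G$ is not even in $\Pcal_r$. Hence $(\Pcal_r)_0 \cap \Scal_0 = \Scal_0 \setminus G$. Second, if $s \in (\Pcal_r)_0 \setminus \Scal_0$ then, as $s \notin \Scal_0$, at least one of $s - a_n,\, s - a_{n+1}$ lies in $\Scal$; being excluded from $\Pcal_r$ by the definition of $(\Pcal_r)_0$, that element must lie in $G$. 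This produces the decomposition
$$(\Pcal_r)_0 = (\Scal_0 \setminus G)\, \cup\, N_r, \qquad N_r \subseteq \big((G + a_n) \cup (G + a_{n+1})\big) \cap \Pcal_r,$$
where every element of $N_r$ still has to satisfy both membership conditions.

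The proof then reduces to two bookkeeping tasks in each of the six cases. The first is to compute $\Scal_0 \cap G$: feeding the explicit $G$ from Proposition \ref{Semigroup_QuasiArithmetics} into Theorem \ref{lipatil} shows that any element of $G$ with a strictly positive $a_{n+1}$-component has its $a_{n+1}$-predecessor again in $G \subseteq \Scal$, hence is not a generator of $\Scal_0$; thus only the ``bottom'' of $G$ survives in $\Scal_0$, namely the layer $\{t_\mu\}$ in cases (a.1) and (b.1), or the single corner $a_r$, resp. $a_{n-1}$, in the remaining cases. These are exactly the sets subtracted from $\Scal_0$ in the statement. The second task is to pin down $N_r$: for each $g \in G$ I would test the two candidates $g + a_n$ and $g + a_{n+1}$, using the relations above to confirm that the candidate lies in $\Pcal_r$ and to check that neither of its two predecessors re-enters $\Pcal_r$. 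Only the translates lying on the exposed faces of $G$ pass both tests, and these reproduce the adjoined sets $\{t_\mu + a_n\}$, $\{t_\mu + d\, a_{n+1}\}$, $\{a_r + a_n\}$, $\{a_{n-1} + q a_n + d a_{n+1}\}$, and so on, recorded in the corollary.

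I expect the genuinely two-parameter cases (a.1), (a.3), (b.1) and (b.3) to be the main obstacle, since there $G$ is a strip or an L-shaped region and the new generators sit at its boundary or at its reentrant corner, where the extremal values of $\mu$ and $\lambda$ are delicate. The crux in each is to rule out a hidden expression of $s - a_n$ or $s - a_{n+1}$ avoiding $a_r$: this is precisely where the sharp minimality constants $q+d+1$ and $q+1$ of Lemma \ref{min}(a)--(b), relation (c) of the same lemma, and the congruence argument modulo $d$ (using $\gcd\{m_1,d\}=1$) from the proof of Proposition \ref{Semigroup_QuasiArithmetics} are needed. Once these boundary checks are carried out, cases (a.2) and (b.2), where $G$ is a single ray, follow by the same scheme, and assembling the six computations yields the stated description of $(\Pcal_r)_0$.
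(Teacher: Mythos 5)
Your proposal is correct and follows exactly the route the paper intends: the paper states this corollary without proof, remarking only that it follows from Proposition \ref{Semigroup_QuasiArithmetics} together with Proposition \ref{S0} (and the description of $\Scal_0$ in Theorem \ref{lipatil}), which is precisely your combination of the membership criterion $(\Pcal_r)_0 = \{s \in \Pcal_r \mid s-a_n, s-a_{n+1} \notin \Pcal_r\}$ with the explicit gap set $G = \Scal\setminus\Pcal_r$. Your master decomposition $(\Pcal_r)_0 = (\Scal_0\setminus G)\cup N_r$ with $N_r \subseteq \bigl((G+a_n)\cup(G+a_{n+1})\bigr)\cap\Pcal_r$ is valid, and the two bookkeeping tasks you identify (computing $\Scal_0\cap G$ and testing boundary translates of $G$, using Lemma \ref{min} and the congruence modulo $d$) are exactly the case-by-case verifications the paper leaves to the reader.
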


\medskip
From Corollary \ref{S_0_QuasiArithmetics} and Proposition \ref{CMcharacSemigroup}, we get the 
following characterization of the Cohen-Macaulay property for this
family of semigroup rings taking into account that $D$ in Proposition \ref{CMcharacSemigroup} equals $m_n$ in these cases.

\begin{corollary}\label{CMcharacterization_QuasiArithmetics}
$K[\Pcal_r]$ is Cohen-Macaulay $\Longleftrightarrow$ $r \leq m_1$ or
$r = n-1$.
\end{corollary}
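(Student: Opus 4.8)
The plan is to reduce the statement to the numerical criterion of Proposition \ref{CMcharacSemigroup} and then read off the answer from the explicit descriptions of $(\Pcal_r)_0$ in Corollary \ref{S_0_QuasiArithmetics} by a pure cardinality count. First I would observe that for every $r \in \{2,\ldots,n-1\}$ the generators $a_n$ and $a_{n+1}$ still belong to $\Acal_r$, so $A = K[x_n,x_{n+1}]$ remains a Noether normalization of $K[\Pcal_r]$ and Proposition \ref{CMcharacSemigroup} applies: $K[\Pcal_r]$ is Cohen-Macaulay if and only if $\vert (\Pcal_r)_0 \vert = D_r$, where $D_r := \omega_n \omega_{n+1}/[\Z^2:\Z\Pcal_r]$. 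Here $\omega_n = \omega_{n+1} = m_n$, and since $n \geq 4$ and we delete only one of the generators, the survivors still generate the group $\Z\Scal = \{(x,y)\in\Z^2 \, \vert \, x+y \equiv 0 \ (\mathrm{mod}\ m_n)\}$, of index $m_n$ in $\Z^2$; hence $D_r = m_n$ for all $r$, exactly as recorded in the remark preceding the corollary. By Theorem \ref{lipatil} we also have $\vert \Scal_0\vert = m_n$.

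The key observation is that every case of Corollary \ref{S_0_QuasiArithmetics} writes $(\Pcal_r)_0 = (\Scal_0 \setminus X) \cup Y$ for an explicit subset $X \subseteq \Scal_0$ and an explicit finite set $Y$ of adjoined elements, so that $\vert (\Pcal_r)_0\vert = m_n - \vert X\vert + \vert Y\vert$ as soon as $Y$ is disjoint from $\Scal_0\setminus X$ and its listed elements are pairwise distinct. I would verify these two facts uniformly. Inspecting the lists, every adjoined $s \in Y$ satisfies either $s - a_n \in \Scal$ or $s - a_{n+1} \in \Scal$ (for instance $t_{\mu}+a_n$ loses $a_n$ to land in $\Scal$, while $t_{\mu}+d\,a_{n+1}$ loses $a_{n+1}$); by the very definition of $\Scal_0$ this forces $s \notin \Scal_0$, whence $Y \cap (\Scal_0 \setminus X) = \emptyset$. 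Pairwise distinctness of the elements of $Y$ is then immediate, since distinct values of $\mu$ give distinct $t_{\mu}$ and a summand $a_n$ is never equal to a summand $d\,a_{n+1}$. With this in hand the count is mechanical.

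Finally I would run the case analysis. In (a.1)--(a.3) one removes exactly as many elements as one adjoins ($\vert X\vert = \vert Y\vert$ equals $q+d$ or $q+d+1$ in (a.1) and $1$ in (a.2) and (a.3)), and likewise $\vert X\vert = \vert Y\vert = 1$ in (b.3); in all of these $\vert (\Pcal_r)_0\vert = m_n = D_r$, so $K[\Pcal_r]$ is Cohen-Macaulay. In (b.1) one removes $d$ elements and adjoins $2d$, giving $\vert (\Pcal_2)_0\vert = m_n + d$, and in (b.2) one removes $1$ and adjoins $2$, giving $\vert (\Pcal_r)_0\vert = m_n + 1$; since $d \geq 1$, both strictly exceed $D_r = m_n$, so by Proposition \ref{CMcharacSemigroup} these rings are not Cohen-Macaulay. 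As the cases (a.1)--(a.3) are precisely $r \leq m_1$, while (a.3) together with (b.3) cover precisely $r = n-1$, and the only non-Cohen-Macaulay instances (b.1)--(b.2) are precisely $r > m_1$ with $2 \leq r \leq n-2$, we conclude that $K[\Pcal_r]$ is Cohen-Macaulay if and only if $r \leq m_1$ or $r = n-1$. The one point demanding care, and which I expect to be the main obstacle rather than any conceptual difficulty, is the uniform verification in the second paragraph that no spurious coincidences shrink the adjoined sets, so that the cardinalities are exactly as claimed.
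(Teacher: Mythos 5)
Your proposal is correct and takes essentially the same route as the paper, which obtains the corollary in one stroke from Proposition \ref{CMcharacSemigroup} and Corollary \ref{S_0_QuasiArithmetics} after noting that $D = m_n$ here; your contribution is simply to spell out the cardinality bookkeeping ($|(\Pcal_r)_0| = m_n$ exactly in cases (a.1)--(a.3) and (b.3), and $|(\Pcal_r)_0| > m_n$ in (b.1)--(b.2)). The disjointness and distinctness checks you flag are genuine but routine, and your verification of them (each adjoined element $s$ satisfies $s - a_n \in \Scal$ or $s - a_{n+1} \in \Scal$, hence $s \notin \Scal_0$) is sound.
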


Moreover, as a consequence of Theorem
\ref{macaulayfication_semigroups} and Corollary
\ref{S_0_QuasiArithmetics}, we get the following result.

\begin{corollary}\label{macaulayfication} For all $r \in
\{2,\ldots,n-2\}$ and $r > m_1$, the 
Macaulayfication of $K[\Pcal_r]$ is $K[\Scal]$.
\end{corollary}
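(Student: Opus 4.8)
The plan is to apply the explicit construction of the Macaulayfication from Theorem~\ref{macaulayfication_semigroups} to $\Pcal_r$, feeding it the description of $(\Pcal_r)_0$ supplied by Corollary~\ref{S_0_QuasiArithmetics}. In the present situation $d=2$, the Noether normalization is $A=K[x_n,x_{n+1}]$, and $a_n=m_n e_1$, $a_{n+1}=m_n e_2$; hence the equivalence relation of Section~4 reads $s_1\sim s_2\Longleftrightarrow s_1-s_2\in\Z\{a_n,a_{n+1}\}$ and $D=m_n$. By Theorem~\ref{macaulayfication_semigroups}, the Macaulayfication of $K[\Pcal_r]$ is $K[\Pcal_r']$ with $\Pcal_r'=\frak B+\N\{a_n,a_{n+1}\}$, where $\frak B$ consists of the componentwise minima, one per $\sim$-class of $(\Pcal_r)_0$. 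On the other hand, since $m_1<\cdots<m_n$ is an arithmetic sequence, $K[\Scal]$ is Cohen-Macaulay, so by Proposition~\ref{CMcharacSemigroup} each $\sim$-class of $\Scal_0$ is a singleton; together with the identity $\Scal=\Scal_0+\N\{a_n,a_{n+1}\}$ this shows that the set of componentwise minima of the $\sim$-classes of $\Scal_0$ is $\Scal_0$ itself. Therefore it suffices to prove that $\frak B=\Scal_0$, for then $\Pcal_r'=\Scal_0+\N\{a_n,a_{n+1}\}=\Scal$.

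Next I would verify $\frak B=\Scal_0$ by a direct computation based on Corollary~\ref{S_0_QuasiArithmetics}. For $r\in\{3,\ldots,n-2\}$ with $r>m_1$ (case (b.2)) we have $(\Pcal_r)_0=(\Scal_0\setminus\{a_r\})\cup\{a_r+a_n,\,a_r+d\,a_{n+1}\}$, so $(\Pcal_r)_0$ and $\Scal_0$ agree outside the class of $a_r$. Both $a_r+a_n$ and $a_r+d\,a_{n+1}$ are $\sim a_r$, and, writing $a_r=(m_r,m_n-m_r)$, the componentwise minimum of $\{(m_r+m_n,\,m_n-m_r),\,(m_r,\,m_n-m_r+d\,m_n)\}$ equals $(m_r,m_n-m_r)=a_r$. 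Thus this class contributes $a_r$ to $\frak B$, while every other class is an unchanged singleton of $\Scal_0$ and contributes its single element; hence $\frak B=\Scal_0$. For $r=2$ (case (b.1)) the very same phenomenon occurs simultaneously in the $d$ classes of $t_\mu:=\mu a_1+a_2$, $0\leq\mu\leq d-1$: each such class of $(\Pcal_2)_0$ equals $\{t_\mu+a_n,\,t_\mu+d\,a_{n+1}\}$, whose componentwise minimum is again $t_\mu$, and the remaining classes are untouched, so once more $\frak B=\Scal_0$.

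Combining the two cases yields $\Pcal_r'=\Scal$, which is the assertion. I expect the only delicate steps to be bookkeeping rather than conceptual: one must check that in each modified class the two new elements genuinely lie in the $\sim$-class of the $\Scal_0$-element they replace --- so that $\frak B$ still has exactly $m_n=D$ elements and no class is merged or lost --- and that taking componentwise minima returns precisely that $\Scal_0$-element, both of which reduce to the trivial inequalities $m_r\leq m_r+m_n$ and $m_n-m_r\leq m_n-m_r+d\,m_n$ (and their analogues for $t_\mu$). Should one prefer to avoid computing $\frak B$, an alternative is to invoke the uniqueness of the Macaulayfication and check the three defining conditions for $\Scal$ directly: $\Pcal_r\subset\Scal$ is immediate, $K[\Scal]$ is Cohen-Macaulay, and ${\rm dim}(K[\Scal\setminus\Pcal_r])\leq d-2=0$ because $\Scal\setminus\Pcal_r$ is finite by Proposition~\ref{Semigroup_QuasiArithmetics}(b.1),(b.2).
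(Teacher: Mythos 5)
Your proposal is correct and follows essentially the paper's own route: the paper obtains this corollary precisely as a consequence of Theorem \ref{macaulayfication_semigroups} applied to the description of $(\Pcal_r)_0$ in Corollary \ref{S_0_QuasiArithmetics}, which is exactly your computation showing that the componentwise minima satisfy $\mathfrak{B} = \Scal_0$, hence $\Pcal_r' = \Scal_0 + \N\{a_n,a_{n+1}\} = \Scal$. Your closing alternative (uniqueness of the Macaulayfication together with the finiteness of $\Scal \setminus \Pcal_r$ from Proposition \ref{Semigroup_QuasiArithmetics}(b.1),(b.2)) is also a valid shortcut, but the main argument coincides with the paper's.
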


In order to get the whole multigraded Noether resolution of $K[\Pcal_r]$
for all $r \in \{2,\ldots,n-2\}$ and $r > m_1$, it remains to study
 its second step. By Theorem \ref{S_1}, its shifts are given by the set $(\Pcal_r)_1 := \{s \in
\Pcal_r \, \vert \, s - a_{n}, s - a_{n+1} \in \Pcal_r$ and $s - a_n -
a_{n+1} \notin \Pcal_r\}$.

 \begin{corollary}\label{S_1_QuasiArithmetics}
$$(\Pcal_r)_1 = \left\{\begin{array}{ll} \left\{ \mu a_1 + a_2 + a_n +
da_{n+1}\ \vert \mu\in\{0,\ldots,d-1\}\right\}, & {\rm if} \ r = 2 {\rm \ and \ } m_1=1, \\
\{a_r+a_n+da_{n+1}\}, & {\rm if}\ r \in
\{3,\ldots,n-2\} \ {\rm and} \  m_1 < r. \end{array} \right. $$
\end{corollary}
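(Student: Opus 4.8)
The plan is to identify $(\Pcal_r)_1$ with the set $\Delta_r := \{s \in \Pcal_r \mid s - a_n, s - a_{n+1} \in \Pcal_r \text{ and } s - a_n - a_{n+1} \notin \Pcal_r\}$ provided by Theorem \ref{S_1}, and then to compute $\Delta_r$ explicitly from the description of the complement $\Scal \setminus \Pcal_r$ in Proposition \ref{Semigroup_QuasiArithmetics}. The first case ($r = 2$, $m_1 = 1$) falls under part (b.1) of that proposition and the second ($r \in \{3,\ldots,n-2\}$, $m_1 < r$) under part (b.2), since the hypothesis $r > m_1$ is exactly the non-Cohen-Macaulay regime singled out by Corollary \ref{CMcharacterization_QuasiArithmetics}; when $r \leq m_1$ the ring $K[\Pcal_r]$ is Cohen-Macaulay and $\Delta_r$ is empty, which is why those cases do not appear.

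The key reduction, which I would establish first, exploits that $K[\Scal]$ is Cohen-Macaulay (as noted after Theorem \ref{lipatil}). By Proposition \ref{CMcharacSemigroup} this says $\Delta_\Scal = \emptyset$; equivalently, for every $t$, if $t - a_n \in \Scal$ and $t - a_{n+1} \in \Scal$ then $t - a_n - a_{n+1} \in \Scal$. Hence for any $s \in \Delta_r$, from $s - a_n, s - a_{n+1} \in \Pcal_r \subseteq \Scal$ we obtain $s - a_n - a_{n+1} \in \Scal$; combined with $s - a_n - a_{n+1} \notin \Pcal_r$ this forces
$$s - a_n - a_{n+1} \in \Scal \setminus \Pcal_r.$$
This is the crucial simplification, since it places $s - a_n - a_{n+1}$ inside the fully explicit set of Proposition \ref{Semigroup_QuasiArithmetics}.

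In case (b.2) the reduction gives $s - a_n - a_{n+1} = a_r + \lambda a_{n+1}$ with $0 \leq \lambda \leq d-1$, so $s = a_r + a_n + (\lambda+1) a_{n+1}$. Imposing $s - a_n = a_r + (\lambda+1) a_{n+1} \in \Pcal_r$ and reading off from (b.2) that $a_r + \nu a_{n+1} \in \Pcal_r$ exactly when $\nu \geq d$ forces $\lambda = d-1$, i.e.\ $s = a_r + a_n + d a_{n+1}$. I would then check this single candidate lies in $\Delta_r$: membership $s - a_n = a_r + d a_{n+1} = d a_1 + a_{r-m_1} \in \Pcal_r$ uses the identity from the proof of Proposition \ref{Semigroup_QuasiArithmetics}, $s - a_{n+1} = a_{r+1} + a_{n-1} + (d-1) a_{n+1} \in \Pcal_r$ follows from the arithmetic relation $a_r + a_n = a_{r+1} + a_{n-1}$ (valid since $3 \leq r \leq n-2$), and $s - a_n - a_{n+1} = a_r + (d-1) a_{n+1} \in \Scal \setminus \Pcal_r$ by (b.2). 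Case (b.1) runs in parallel: the reduction yields $s = \mu a_1 + a_2 + a_n + (\lambda+1) a_{n+1}$ with $0 \leq \mu, \lambda \leq d-1$, and imposing $s - a_n = \mu a_1 + a_2 + (\lambda+1) a_{n+1} \in \Pcal_2$ forces $\lambda+1 = d$ via (b.1), producing the $d$ candidates $s = \mu a_1 + a_2 + a_n + d a_{n+1}$, $0 \leq \mu \leq d-1$; each is checked to lie in $\Delta_2$ using the relations $\mu a_1 + a_2 + d a_{n+1} = (\mu+d+1) a_1$ and $a_2 + a_n = a_3 + a_{n-1}$, and their distinctness is immediate from their distinct first coordinates.

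The step I expect to be the main obstacle is the pair of membership verifications, in particular the direction showing the surviving candidates genuinely belong to $\Pcal_r$ rather than merely to $\Scal$. This requires exhibiting for each element an expression that avoids the generator $a_r$, through the arithmetic-sequence relations $a_i + a_j = a_{i-1} + a_{j+1}$ together with the special identity $a_r + d a_{n+1} = d a_1 + a_{r-m_1}$, and also verifying, by comparing first coordinates against the explicit list in (b.1)--(b.2), that raising the exponent of $a_{n+1}$ from $d-1$ to $d$ really exits the complement $\Scal \setminus \Pcal_r$, so that the relevant elements land in $\Pcal_r$.
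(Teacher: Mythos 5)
Your proof is correct, and it genuinely fills in an argument that the paper leaves implicit (the corollary is stated there with no proof, resting on the definition $(\Pcal_r)_1 = \{s \in \Pcal_r \mid s-a_n, s-a_{n+1} \in \Pcal_r,\ s-a_n-a_{n+1} \notin \Pcal_r\}$ coming from Theorem \ref{S_1}). Your organizing device --- that Cohen-Macaulayness of $K[\Scal]$ forces $s - a_n - a_{n+1} \in \Scal \setminus \Pcal_r$ for any $s \in (\Pcal_r)_1$, so that every candidate is pinned down by the explicit complement in Proposition \ref{Semigroup_QuasiArithmetics} --- is a clean reduction that the paper never states, and all the subsequent verifications check out: the rewriting identities $a_r + d a_{n+1} = d a_1 + a_{r-m_1}$, $\mu a_1 + a_2 + d a_{n+1} = (\mu+d+1)a_1$ and $a_r + a_n = a_{r+1} + a_{n-1}$ are exactly the ones used in the proof of Proposition \ref{Semigroup_QuasiArithmetics}, the index constraints ($r \leq n-2$, $n \geq 4$, $r - m_1 \neq r$) all hold, and the uniqueness of representations of the form $\mu a_1 + a_2 + \nu a_{n+1}$ (compare first coordinates) justifies reading membership in $\Pcal_r$ off the lists (b.1)--(b.2). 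The route the paper's layout suggests is slightly different: combine Corollary \ref{S_0_QuasiArithmetics} with the pairing phenomenon established as property (e) in the proof of Theorem \ref{upperboundregmoncurve}, namely that within each equivalence class $E$ with $(\Pcal_r)_0 \cap E = \{(x_1,y_1),\ldots,(x_t,y_t)\}$, $x_1 < \cdots < x_t$, one has $(\Pcal_r)_1 \cap E = \{(x_2,y_1),\ldots,(x_t,y_{t-1})\}$; since in cases (b.1)--(b.2) each affected class contains exactly two elements of $(\Pcal_r)_0$, the answer drops out immediately. That route buys brevity by reusing machinery already proved; yours buys self-containedness, since it needs only Proposition \ref{Semigroup_QuasiArithmetics} and the Cohen-Macaulayness of $K[\Scal]$, at the cost of the explicit membership verifications you correctly identified as the delicate step.
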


\medskip
As a consequence of the above results, we are able to provide the multigraded 
 Noether resolution of $K[\Pcal_r]$ for all $r \in
\{2,\ldots,n-1\}$.

\begin{theorem}\label{resolution_QuasiArithmetics}Let $q,l \in \N$ be the
integers $q := \lceil (m_1-1)/(n-1) \rceil$ and $l := m_1 - q(n-1)$.
If we set $s_{\lambda} :=
\left(\left\lceil\frac{\lambda}{n-1}\right\rceil m_n-\lambda
d,\,\lambda d \right) \in \N^2$ for all $\lambda \in
\{0,\ldots,m_n-1\}$, then the multigraded Noether resolution of $K[\Pcal_r]$ is
given by the following expressions: \begin{itemize}
\item For $m_1 \geq 2$, then
$$ 0 \xrightarrow{} \left( \oplus_{\lambda = 0,\, \lambda \notin
\Lambda_1}^{m_n-1} A \cdot s_{\lambda} \right)  \oplus \bigg(
\oplus_{\lambda \in \Lambda_1} A\cdot(s_{\lambda}+a_n) \bigg)
\xrightarrow{}  K[\Pcal_2] \xrightarrow{} 0,$$ where $\Lambda_1 :=
\{\mu(n-1) - 1\, \vert \, 1 \leq \mu \leq q+d+\epsilon\}$, and
$\epsilon = 1$ if $l = n-1$, or $\epsilon = 0$ otherwise.
\item For $r \in \{3,\ldots,n-2\}$ and $r \leq m_1$, then
$$0 \xrightarrow{} \left(\oplus_{\lambda=0,\,\lambda\neq n-r}^{m_n-1} A\cdot s_{\lambda}\right)  \oplus
A\cdot (a_{r}+a_n)\xrightarrow{}  K[\Pcal_r] \xrightarrow{} 0$$
\item For $r = n-1 \leq m_1$, then
$$0 \xrightarrow{} \left(\oplus_{\lambda=0,\, \lambda \neq 1}^{m_n-1} A\cdot s_{\lambda}\right)  \oplus
A\cdot(a_{n-1}+ (q + \epsilon)a_n + d a_{n+1}) \xrightarrow{}
K[\Pcal_{n-1}] \xrightarrow{} 0,$$ where $\epsilon = 1$ if $l =
n-1$, or $\epsilon = 0$ otherwise.
\item For $m_1 = 1$, then
$$ 0 \xrightarrow{} \oplus_{\lambda\in\Lambda_2} A\cdot(s_{\lambda}+a_n+da_{n+1})
\xrightarrow{} \begin{array}{c} \oplus_{\lambda=0,\lambda\notin\Lambda_2}^{m_n-1} A\cdot s_{\lambda} \\
 \oplus \\ \oplus_{\lambda\in\Lambda_2} A\cdot (s_{\lambda}+a_n) \\ \oplus \\  \oplus_{\lambda\in\Lambda_2} A
\cdot(s_{\lambda} + d a_{n+1})
\end{array} \xrightarrow{}  K[\Pcal_2] \xrightarrow{} 0,$$where
$\Lambda_2 := \{\mu(n-1) - 1\, \vert \, 1 \leq \mu \leq d\}$.

\item For $r \in \{3,\ldots,n-2\}$ and $r > m_1$, then
$$ 0 \xrightarrow{} A\cdot(a_{r}+ a_n + d a_{n+1})
\xrightarrow{} \begin{array}{c}
\left(\oplus_{\lambda=0,\,\lambda\neq
n-r}^{m_n-1} A\cdot s_{\lambda}\right) \\ \oplus \\
A\cdot(a_{r}+a_n) \oplus A\cdot(a_{r}+d a_{n+1})\end{array}
\xrightarrow{} K[\Pcal_r] \xrightarrow{} 0.$$
\item For $r = n-1 > m_1$, then
$$0 \xrightarrow{} \left(\oplus_{\lambda=0,\, \lambda \neq 1}^{m_n-1} A\cdot s_{\lambda}\right)  \oplus
A\cdot(a_{n-1}+ d a_{n+1})\xrightarrow{}  K[\Pcal_{n-1}]
\xrightarrow{} 0.$$
\end{itemize}
\end{theorem}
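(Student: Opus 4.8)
The plan is to assemble the resolution directly from the structural results already established, reducing everything to identifying the two sets of shifts $(\Pcal_r)_0$ and $(\Pcal_r)_1$ inside the indexing $\{s_\lambda \mid 0\le\lambda\le m_n-1\}$ of $\Scal_0$ furnished by Theorem \ref{lipatil}. By Proposition \ref{S0} the first step of the multigraded Noether resolution of $K[\Pcal_r]$ is governed by $(\Pcal_r)_0$, and by Theorem \ref{S_1} the second step is governed by $(\Pcal_r)_1$; moreover, by Corollary \ref{CMcharacterization_QuasiArithmetics}, $K[\Pcal_r]$ is Cohen-Macaulay exactly when $r\le m_1$ or $r=n-1$, so in those cases the second step is empty and Proposition \ref{S0} already yields the whole resolution. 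Since the sets $(\Pcal_r)_0$ and $(\Pcal_r)_1$ are provided verbatim by Corollaries \ref{S_0_QuasiArithmetics} and \ref{S_1_QuasiArithmetics}, no new exactness or minimality argument is needed: the first step is minimal because it is a minimal generating set (Proposition \ref{S0}), and in the two non-Cohen-Macaulay cases the second step is minimal by the basis description of ${\rm Ker}(\psi_0)$ coming from Theorem \ref{S_1} via Proposition \ref{Cmonomials}.

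What remains is a bookkeeping translation of the element descriptions of those corollaries into the shifts $s_\lambda$. First I would record three identities. Writing $a_r=(m_r,m_n-m_r)$ and using $m_n-m_r=(n-r)d$, a check of both coordinates gives $a_r=s_{n-r}$ for $r\in\{3,\ldots,n-2\}$ (here $\lceil (n-r)/(n-1)\rceil=1$, since $0<n-r<n-1$) and $a_{n-1}=s_1$. Next, from $a_1=(m_1,(n-1)d)$ and $a_2=(m_1+d,(n-2)d)$ one computes $t_\mu=\mu a_1+a_2=((\mu+1)m_1+d,\,((\mu+1)(n-1)-1)d)$, and matching second coordinates forces $\lambda=(\mu+1)(n-1)-1$; the first coordinate then agrees because $\lceil\lambda/(n-1)\rceil=\mu+1$ (using $n\ge4$) and $m_n-(n-1)d=m_1$. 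Thus $t_\mu=s_{(\mu+1)(n-1)-1}$, and reindexing $\mu'=\mu+1$ turns the ranges $0\le\mu\le q+d-1+\epsilon$ and $0\le\mu\le d-1$ into precisely the index sets $\Lambda_1=\{\mu'(n-1)-1 \mid 1\le\mu'\le q+d+\epsilon\}$ and $\Lambda_2=\{\mu'(n-1)-1 \mid 1\le\mu'\le d\}$ of the statement.

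With these identities in hand, the six displayed resolutions follow one by one by substitution. For the Cohen-Macaulay cases I would read off $(\Pcal_r)_0$: for $r=2$ with $m_1\ge2$ the removed and added elements $t_\mu$, $t_\mu+a_n$ become $s_\lambda$, $s_\lambda+a_n$ for $\lambda\in\Lambda_1$; for $r\in\{3,\ldots,n-2\}$ with $r\le m_1$ the element $a_r=s_{n-r}$ is replaced by $a_r+a_n$; and for $r=n-1$ the element $a_{n-1}=s_1$ is replaced by $a_{n-1}+(q+\epsilon)a_n+d\,a_{n+1}$. For the two non-Cohen-Macaulay cases ($r=2$ with $m_1=1$, and $r\in\{3,\ldots,n-2\}$ with $r>m_1$) I would additionally feed $(\Pcal_r)_1$ from Corollary \ref{S_1_QuasiArithmetics} into the second step, writing $t_\mu+a_n+d\,a_{n+1}=s_\lambda+a_n+d\,a_{n+1}$ for $\lambda\in\Lambda_2$, respectively $a_r+a_n+d\,a_{n+1}=s_{n-r}+a_n+d\,a_{n+1}$.

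The semigroup-theoretic content is already settled in Proposition \ref{Semigroup_QuasiArithmetics} and its corollaries, so I do not expect a genuine obstacle here; the proof is essentially a translation of indices. The only delicate points, which I would write out most carefully, are the evaluations of the ceiling functions $\lceil\lambda/(n-1)\rceil$ at the relevant indices and the uniform handling of the subcase $l=n-1$ through the parameter $\epsilon$, so that the boundary indices of $\Lambda_1$ match exactly. Everything else reduces to the three substitutions above.
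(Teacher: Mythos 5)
Your proposal is correct and follows exactly the route the paper itself takes: the paper presents Theorem \ref{resolution_QuasiArithmetics} as a direct consequence of Proposition \ref{S0}, Theorem \ref{S_1}, and Corollaries \ref{S_0_QuasiArithmetics}, \ref{S_1_QuasiArithmetics} and \ref{CMcharacterization_QuasiArithmetics}, with the only remaining work being the index translation $a_r = s_{n-r}$, $a_{n-1}=s_1$, $t_\mu = s_{(\mu+1)(n-1)-1}$ that you carry out. Your verification of the ceiling-function evaluations and the reindexing producing $\Lambda_1$ and $\Lambda_2$ is exactly the bookkeeping the paper leaves implicit.
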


\medskip It is worth pointing out that from Theorem \ref{resolution_QuasiArithmetics} and Remark \ref{multtostandardgraded}, one 
can obtain the Noether resolution of $K[\Pcal_r]$ with respect to the standard grading. 
In addition, the description of $(\Pcal_r)_i$ for all $r \in \{2,\ldots,n-1\}$, $i \in \{0,1\}$,
allows us to use Remark \ref{multtostandardgraded} to
provide a formula for the Castelnuovo-Mumford regularity of
$K[\Pcal_r]$.
\begin{theorem}\label{regularity_QuasiArithmetics}The
Castelnuovo-Mumford regularity of $K[\Pcal_r]$ equals:

\smallskip
\begin{center}
{\rm reg}$(K[\Pcal_r]) = \left\{
\begin{array}{ll}
\lceil \frac{m_n - 1}{n-1} \rceil + 1, & $ if $r \in \{2,n-1\}$ and $r \leq m_1, \\
2d, &$ if $r = 2$ and $m_1 = 1$, and $\\
\lceil \frac{m_n - 1}{n-1} \rceil, &$ if $r \in
\{3,\ldots,n-2\},$ or $r = n-1$ and $m_1 < r \end{array}\right.$
\end{center}
\end{theorem}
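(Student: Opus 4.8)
The plan is to reduce the computation of $\reg(K[\Pcal_r])$ to a purely combinatorial maximum over the two finite sets $(\Pcal_r)_0$ and $(\Pcal_r)_1$ that were already determined in Corollaries \ref{S_0_QuasiArithmetics} and \ref{S_1_QuasiArithmetics}. First I would note that each generator $a_i$ of $\Scal$ (hence of every $\Pcal_r$) has coordinate sum $a_{i,1}+a_{i,2}=m_n$, so $I_{\Acal_r}$ is $\omega$-homogeneous with $\omega_1=\cdots=m_n$; thus $K[\Pcal_r]$ is standard graded and the regularity formula (\ref{formularegularidad}) of Remark \ref{multtostandardgraded} applies. Since for any $s=(b_1,b_2)\in\Pcal_r$ the integer $(b_1+b_2)/m_n$ is exactly its standard degree, that formula becomes
\[
\reg(K[\Pcal_r])=\max\Big(\big\{\tfrac{b_1+b_2}{m_n}\mid(b_1,b_2)\in(\Pcal_r)_0\big\}\cup\big\{\tfrac{b_1+b_2}{m_n}-1\mid(b_1,b_2)\in(\Pcal_r)_1\big\}\Big),
\]
so everything reduces to reading off degrees from the explicit sets above.

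Next I would set up a short degree dictionary: by Theorem \ref{lipatil} one has $\tfrac{1}{m_n}\big((s_\lambda)_1+(s_\lambda)_2\big)=\lceil\lambda/(n-1)\rceil$, while $\deg(a_i)=1$, $\deg(t_\mu)=\mu+1$ for $t_\mu=\mu a_1+a_2$, and adjoining $a_n$ or $a_{n+1}$ raises the degree by $1$ whereas adjoining $d\,a_{n+1}$ raises it by $d$. I would then split according to Corollary \ref{CMcharacterization_QuasiArithmetics}: in the Cohen--Macaulay range ($r\le m_1$ or $r=n-1$) we have $(\Pcal_r)_1=\emptyset$, so $\reg(K[\Pcal_r])$ is just the top degree occurring in $(\Pcal_r)_0$; in the remaining non-Cohen--Macaulay cases one must also account for the top degree of $(\Pcal_r)_1$ after the $-1$ shift.

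For the Cohen--Macaulay cases I would compute the top degree of $(\Pcal_r)_0$ by comparing two quantities: the largest degree among the surviving $\Scal_0$-elements, and the largest degree among the vectors inserted by Corollary \ref{S_0_QuasiArithmetics}. From Theorem \ref{lipatil} the degrees on $\Scal_0$ increase with $\lambda$ and peak at $\lambda=m_n-1$ with value $\lceil(m_n-1)/(n-1)\rceil=\reg(K[\Scal])$; tracking which indices are deleted (the $t_\mu$ for $r=2$, the single $s_1=a_{n-1}$ for $r=n-1$) shows that this peak is either retained or, in the boundary subcase $l=n-1$ with $r=2$, carried by the replacement $s_{m_n-1}+a_n$. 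Since every inserted vector carries one extra generator ($a_n$, or $q\,a_n+d\,a_{n+1}$), its degree is of the form $q+d+1$ up to the correction $\epsilon$, and comparing with $\lceil(m_n-1)/(n-1)\rceil$ produces the closed form in the statement. For the non-Cohen--Macaulay cases I would instead locate the maximum among the elements of $(\Pcal_r)_1$ given by Corollary \ref{S_1_QuasiArithmetics} (the single vector $a_r+a_n+d\,a_{n+1}$, or the family $\mu a_1+a_2+a_n+d\,a_{n+1}$ when $m_1=1$), subtract $1$, and verify it dominates every degree arising from $(\Pcal_r)_0$, which yields the values $2d$ and $\lceil(m_n-1)/(n-1)\rceil$.

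The main obstacle is the bookkeeping that turns the abstract degrees $q+d+1$, $d+2$, etc. into the stated closed forms: this requires repeatedly using the identity $m_n=(q+d)(n-1)+l$ coming from $q=\lfloor(m_1-1)/(n-1)\rfloor$ and $l=m_1-q(n-1)$ (Lemma \ref{min}), together with a careful treatment of the boundary subcases $l=n-1$ (the parameter $\epsilon$), $m_1=1$, and $r=m_1$, where the deleted and inserted families of Corollary \ref{S_0_QuasiArithmetics} change shape. I expect this boundary analysis, rather than any conceptual difficulty, to be the delicate part, since one must ensure that no inserted vector whose degree equals the $\Scal_0$-maximum is overlooked and that the $-1$ shift from $(\Pcal_r)_1$ is applied in exactly the right cases.
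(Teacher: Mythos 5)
Your reduction is exactly the paper's own route: the paper offers no argument for this theorem beyond invoking formula (\ref{formularegularidad}) of Remark \ref{multtostandardgraded} together with the descriptions of $(\Pcal_r)_0$ and $(\Pcal_r)_1$ in Corollaries \ref{S_0_QuasiArithmetics} and \ref{S_1_QuasiArithmetics}, which is precisely your plan, and your degree dictionary and the Cohen--Macaulay splitting via Corollary \ref{CMcharacterization_QuasiArithmetics} are correct.

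The genuine gap is the step you defer: the assertion that ``comparing with $\lceil (m_n-1)/(n-1)\rceil$ produces the closed form in the statement'' fails in identifiable subcases, so the plan cannot be completed as announced. Write $m_1=q(n-1)+l$ with $1\le l\le n-1$, so $m_n-1=(q+d)(n-1)+(l-1)$. For $r\in\{2,n-1\}$ and $r\le m_1$, the two competing maxima are $\lceil (m_n-1)/(n-1)\rceil$ (largest surviving $s_\lambda$) and $q+d+1+\epsilon$ (largest inserted vector, $\epsilon=1$ iff $l=n-1$). When $2\le l\le n-2$ both equal $q+d+1$, so the regularity is $\lceil (m_n-1)/(n-1)\rceil$ and \emph{not} $\lceil (m_n-1)/(n-1)\rceil+1$ as stated. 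Concretely, take $n=4$, $(m_1,\ldots,m_4)=(2,3,4,5)$, $r=2\le m_1$: Corollary \ref{S_0_QuasiArithmetics} (checkable by hand here) gives $(\Pcal_2)_0=\{(0,0),(2,3),(4,1),(6,4),(8,2)\}$ with $(\Pcal_2)_1=\emptyset$, hence $\reg(K[\Pcal_2])=2$, while the printed formula gives $\lceil 4/3\rceil+1=3$. Dually, when $m_1=1$ and $r\in\{3,\ldots,n-1\}$, the largest surviving $s_\lambda$ has degree $d$, but the inserted vector $a_r+d\,a_{n+1}$ has degree $d+1$ (and for $r\le n-2$ the element $a_r+a_n+d\,a_{n+1}\in(\Pcal_r)_1$ also contributes $d+2-1=d+1$), so $\reg(K[\Pcal_r])=d+1$, not the printed $\lceil (m_n-1)/(n-1)\rceil=d$; for instance $(1,2,3,4)$, $r=3$ gives $(\Pcal_3)_0=\{(0,0),(1,3),(2,2),(3,5)\}$ and regularity $2$, not $1$ (indeed a non-degenerate quartic curve in $\mathbb{P}_K^{3}$ cannot be $2$-regular). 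A faithful execution of your plan therefore yields $\lfloor m_n/(n-1)\rfloor+1$ in the first case and $d+1$ in the $m_1=1$, $r\ge 3$ subcases; the defect lies in the printed closed forms rather than in your method, but your proposal asserts the computation will land on those forms, and that assertion is exactly where it breaks.
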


Let us illustrate the results of this section with an example.

\begin{example}Consider the  projective monomial curve given parametrically by:
$$x_1=st^6,x_2=s^5 t^2, x_4=s^7,x_5=t^7.$$
We observe that the curve corresponds to $\Ccal_2$, where $\Ccal$ is
the curve associated to the arithmetic sequence $m_1 < \cdots < m_n$
with $m_1 = 1$, $d = 2$ and $n = 4$. Hence, by Theorem
\ref{resolution_QuasiArithmetics}, we get that the multigraded
Noether resolution of $K[\Pcal_2]$ is

$$ 0 \xrightarrow{} A\cdot(10,18) \oplus A\cdot(11,24) \xrightarrow{} \begin{array}{c} A \oplus A\cdot(1,6) \oplus
A\cdot(5,2) \oplus \\  A\cdot(2,12) \oplus A\cdot(6,8) \oplus
A\cdot(10,4) \oplus
\\ A\cdot(3,18) \oplus A\cdot(11,10) \oplus A\cdot(4,24)
\end{array} \xrightarrow{} K[\Pcal_2] \xrightarrow{} 0.$$
By Corollary \ref{multiHilbert}, we get that the multigraded Hilbert series of $K[\Pcal_2]$ is
$$ HS_{K[\Pcal_2]}(t_1,t_2) = \frac{1 + t_1 t_2^6 + t_1^2t_2^{12} + t_1^{4}t_2^{24} + t_1^{3}t_2^{18} + t_1^5 t_2^2  + t_1^{6}t_2^{8}+ t_1^{10}t_2^{4}- t_1^{10}t_2^{18} + t_1^{11}t_2^{10} - t_1^{11}t_2^{24}}{(1 - t_1^7) (1 - t_2^7)}.$$

Following Remark \ref{multtostandardgraded}, if we consider the standard grading on $R$, we get
the following Noether resolution of $K[\Pcal_2]$: 
$$ 0 \xrightarrow{} A(-4) \oplus A(-5) \xrightarrow{} \begin{array}{c} A \oplus A(-1)^2 \oplus
A(-2)^3 \\  A(-3)^2 \oplus A(-4)
\end{array} \xrightarrow{} K[\Pcal_2] \xrightarrow{} 0,$$
and the following expression for the Hilbert series of $K[\Pcal_2]$:
$$ HS_{K[\Pcal_2]}(t) = \frac{1 + 2t + 3t^2 + 2t^3 - t^5}{(1 - t)^2}.$$
We also have that ${\rm reg}(K[\Pcal_2]) =  4$.
\end{example}

\section*{Acknowledgements}
The authors want to thank the anonymous referees for their comments and  suggestions that we believe have helped to improve this manuscript. In particular,
Section 5 was included to answer a question made by the referees.

The first three authors were supported by the Ministerio de
Econom\'ia y Competitividad,  Spain (MTM2013-40775-P and MTM2016-78881-P).

\end{document}